%Submitted 5/7/2019
 \documentclass[11pt,twoside]{amsart}
\usepackage{amsmath,amsfonts,amscd,amsthm}
\usepackage{amssymb}
\usepackage{color}
%% Definition des theoremes etc...
%\theorembodyfont{\sl}
\newtheorem{thm}{Theorem}[section]
\newtheorem{prop}[thm]{Proposition}
\newtheorem{lem}[thm]{Lemma}
\newtheorem{cor}[thm]{Corollary}

\theoremstyle{definition}
\newtheorem{defin}[thm]{Definition}

\theoremstyle{remark}
\newtheorem{rem}[thm]{Remark}
%\theorembodyfont{\rm}

\numberwithin{equation}{section}

\oddsidemargin = 0.5cm \evensidemargin= 0.5cm \textwidth=15cm
\topmargin=0.5cm \textheight=21.5cm

\markboth{C. Laurent-Thi\'ebaut and M.C. Shaw\hfil}{\hfil
 Holomorphic approximation  via Dolbeault cohomology}

\pagestyle{myheadings}
% \input macros   %abreviations diverses
%macros
 \providecommand\ufootnote[1]{{\let\thefootnote\relax\footnote[0]{#1}}}

\newcommand{\ac}{\mathcal A}

\newcommand{\cc}{\mathcal C}
\newcommand{\dc}{\mathcal D}
\newcommand{\ec}{\mathcal E}

\newcommand{\oc}{\mathcal O}

\newcommand{\nb}{\mathbb N}

\newcommand{\cb}{\mathbb C}

\newcommand{\pb}{\mathbb P}

\newcommand{\ci}{{\mathcal C}^\infty}

\newcommand{\ol}{\overline}

\newcommand{\opa}{\ol\partial}
\newcommand{\wt}{\widetilde}

%pour changer le texte de debut de demonstration
%\begin{proof}[texte]

%DEBUT DU TEXTE

\begin{document}

\title{Holomorphic approximation via Dolbeault cohomology}

%\author{Christine Laurent-Thi\'ebaut and Mei-Chi Shaw}

\thanks{The first author would like to thank the university of Notre Dame for its support during her stay in April 2019.  The second author was partially supported by  National Science Foundation grant  DMS-1700003}

\subjclass[2010]{ 32E30, 32W05}
\keywords{Runge's theorem, Mergelyan property, Dolbeault cohomology}
\author{Christine Laurent-Thi\'ebaut}
\address{Universit\'e Grenoble-Alpes, Institut Fourier, CS 40700
38058 Grenoble cedex 9, France and CNRS UMR 5582, Institut Fourier,
Saint-Martin d'H\`eres, F-38402, France}
\email{christine.laurent@univ-grenoble-alpes.fr}
\author{Mei-Chi Shaw}
\address{Department of Mathematics,  University of Notre Dame, Notre Dame, IN 46656, USA. }
\email{mei-chi.shaw.1@nd.edu }

\maketitle

%\ufootnote{\hskip-0.6cm \newline  {A.M.S. Subject Classification}: 32E30, 32W05.\newline {\it Key words}~:Runge's theorem, Mergelyan property, Dolbeault cohomology.}

\bibliographystyle{amsplain}
\begin{abstract}
 The purpose of this paper is to study holomorphic approximation and approximation of $\opa$-closed forms in complex manifolds of complex dimension $n\ge 1$.  We consider extensions of the classical Runge theorem and   the Mergelyan property to domains in complex manifolds for the $\ci$-smooth and the $L^2$ topology. We   characterize the Runge or Mergelyan  property   in terms of certain   Dolbeault cohomology groups and some geometric sufficient conditions are given.
\end{abstract}
\medskip\medskip

%\tableofcontents
Holomorphic approximation is a fundamental subject in complex analysis.
The Runge theorem asserts that, if $K$ is a compact subset of an open Riemann surface $X$ such that $X\setminus K$ has no relatively compact connected components, then every holomorphic function on a neighborhood of $K$ can be approximated uniformly on $K$ by holomorphic functions on $X$.

If $K$ is a compact subset of an open Riemann surface $X$, we denote by $\ac(K)$ the space of continuous functions on $K$, which are holomorphic in the interior of $K$. Then the Mergelyan theorem asserts that, if $K$ is such that $X\setminus K$ has no relatively compact connected components, then every function in $\ac(K)$ can be approximated uniformly on $K$ by holomorphic functions on $X$.

Holomorphic approximation in one complex variable has been    studied and well understood, while the analogous problems in several variables are much less understood with many open questions.     An   up-to-date account of  the history and recent development  of holomorphic approximation in one and several variables can be found in the   paper by J.E. Fornaess, F. Forstneric and E.F. Wold \cite{FoForWo}.

In this paper we will consider holomorphic approximation in complex manifolds of higher complex dimension and also approximation of $\opa$-closed forms for different topologies like the uniform or the smooth topology on compact subsets or the $L^2$ topology.
The aim  is to characterize different types of holomorphic or $\opa$-closed approximation in a subdomain of a complex manifold using properties of the Dolbeault cohomology   with compact or prescribed support in the domain  or using properties of the Dolbeault cohomology of the complement of the domain with respect to some family of support.

If $M$ is a complex manifold, we denote by $H^{p,q}_c(M)$ the Dolbeault cohomology group with compact support of bidegree $(p,q)$ in $M$. Let $D\subset\subset X$ be relatively compact domain in a complex manifold $X$, for any neighborhood $V$ of $X\setminus D$ the family $\Phi$ of supports in $V$ consists of all closed subsets $F$ of $V$ such that $F\cup\ol D$ is a compact subset of $X$. For $0\leq p,q\leq n$,
$H^{p,q}_\Phi(X\setminus D)=0$ means that for any neighborhood $V$ of $X\setminus D$ and for any $\opa$-closed $(p,q)$-form $f\in\ci_{p,q}(V)$ with ${\rm supp}~f\in\Phi$, there exist a neighborhood $U\subset V$ of $X\setminus D$ and a $(p,q-1)$-form $g\in\ci_{p,q-1}(U)$ with ${\rm supp}~g\subset F\in\Phi$ such that $\opa g=f$ on $U$.

In the first part, in the spirit of the Runge theorem, we get the following result:

\begin{thm}
Let $X$ be a Stein manifold of complex dimension $n\geq 2$ and $D\subset\subset X$ a relatively compact domain in $X$ with connected complement, then the following assertions are equivalent

(i) $D$ is pseudoconvex and any holomorphic function on $D$ can be approximate by holomorphic functions in $X$ uniformly on  compact subsets of $D$;

(ii) $H^{n,r}_c(D)=0$, for $2\leq r\leq n-1$, and the natural map $H^{n,n}_c(D)\to H^{n,n}_c(X)$ is injective;

(iii) $H^{n,q}_\Phi(X\setminus D)=0$ for all $1\leq q\leq n-1$.
\end{thm}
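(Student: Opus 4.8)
The plan is to prove the three equivalences in a cycle $(i)\Rightarrow(iii)\Rightarrow(ii)\Rightarrow(i)$, using duality between Dolbeault cohomology with compact (or $\Phi$-) support and ordinary Dolbeault cohomology, together with the characterization of approximation in terms of vanishing of a relative cohomology group. The first step is to set up the functional-analytic framework: fix a smooth exhaustion and express "holomorphic functions on $D$ are approximated uniformly on compacta by $\oc(X)$" as density of the restriction image $\oc(X)\to\oc(D)$ in the Fréchet topology of $\oc(D)$. By Hahn--Banach and the Serre-type duality between $H^{0,q}$ on $D$ and $H^{n,n-q}_c(D)$ (available since $X$ is Stein and we may assume the relevant spaces are separated), density is equivalent to injectivity of the transpose map on the dual side, which in turn translates into statements about the restriction/extension maps on the cohomology groups $H^{n,n}_c(D)\to H^{n,n}_c(X)$ and the vanishing of intermediate groups. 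Pseudoconvexity of $D$ enters here because it is what guarantees that $\opa$ has closed range on $D$ in the relevant bidegrees, so that the duality pairings are well-defined and non-degenerate; conversely, condition (ii) (vanishing of $H^{n,r}_c(D)$ for $2\le r\le n-1$) combined with the long exact sequence will force $D$ to be pseudoconvex via a Serre-duality/finiteness argument.

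Next I would exploit the long exact sequence of Dolbeault cohomology for the pair $(X, X\setminus D)$ with the family of supports $\Phi$ — that is, the sequence relating $H^{p,q}_\Phi(X\setminus D)$, $H^{p,q}_c(X)$ via extension by zero, and $H^{p,q}(D)$ (or its compactly-supported analogue) via restriction. Since $X$ is Stein of dimension $n\ge 2$, the groups $H^{n,q}_c(X)$ vanish for $q\le n-1$ and $H^{n,q}(X)$ vanish for $q\ge 1$; feeding this into the long exact sequence collapses it and yields that $H^{n,q}_\Phi(X\setminus D)\cong H^{n,q-1}_c(D)$ for $q$ in the middle range, while at the top and bottom ends one gets the precise statements about $H^{n,n}_c(D)\to H^{n,n}_c(X)$ and about $H^{n,1}_\Phi(X\setminus D)$ encoding the approximation property. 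This is the mechanism that makes (ii) and (iii) literally the same statement once the Stein vanishing is inserted, so most of the work in the $(iii)\Leftrightarrow(ii)$ direction is bookkeeping with the connecting homomorphisms, plus checking that all spaces involved are Hausdorff so that the sequence is exact in the topological sense.

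The genuinely analytic step, and the one I expect to be the main obstacle, is $(iii)\Rightarrow(i)$: producing the actual approximation (and the pseudoconvexity of $D$) from the vanishing of $H^{n,q}_\Phi(X\setminus D)$. Here the strategy is: given $f\in\oc(D)$ and a compact $K\subset D$, one wants to solve a $\opa$-problem globally on $X$ after cutting off $f$; the cohomological obstruction to doing so with controlled support lives exactly in $H^{n,1}_\Phi(X\setminus D)$ (via Serre duality in the bottom degree), so its vanishing lets one correct the cutoff and recover a global holomorphic function close to $f$ on $K$. The delicate points are (a) justifying the duality in degree $(n,1)$, which requires knowing that $\opa$ has closed range — this is where one must *prove* rather than assume pseudoconvexity, presumably by showing that if $D$ failed to be pseudoconvex then the Hartogs/Kugelhupf phenomenon would contradict the vanishing hypothesis, forcing some $H^{n,r}_c(D)\ne0$; and (b) passing from "approximation on $K$" for a single $f$ to the statement as phrased, which is routine once closed range is in hand. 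For the reverse implication $(ii)\Rightarrow(i)$ and $(i)\Rightarrow(iii)$ I would reuse the same duality dictionary in the opposite direction. I expect the closed-range / pseudoconvexity equivalence to be where the dimension hypothesis $n\ge2$ and the connectedness of the complement are really used.
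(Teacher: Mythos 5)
Your overall architecture is the same as the paper's: Hahn--Banach plus the Serre-type pairing between $H^{0,q}(D)$ and $H^{n,n-q}_c(D)$ to convert density into injectivity of $H^{n,n}_c(D)\to H^{n,n}_c(X)$ (the paper's Theorem \ref{runge} and Corollary \ref{0runge}), cut-off/excision arguments identifying $H^{n,r}_c(D)$ with $H^{n,r-1}_\Phi(X\setminus D)$ using the Stein vanishing of $H^{n,\bullet}_c(X)$ (Theorem \ref{complement}, Proposition \ref{pseudoconvexity}, Lemma \ref{complementsupport}), and the cohomological characterization of pseudoconvexity of relatively compact domains in Stein manifolds. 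Two remarks on that last block: the paper does not invoke a long exact sequence of the pair, because $H^{p,q}_\Phi(X\setminus D)$ is defined germ-wise over shrinking neighborhoods of $X\setminus D$, so the connecting maps have to be built by hand with cut-off functions; your bookkeeping would go through, but you would need to justify the sequence for this nonstandard family of supports rather than quote it.

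There are two concrete soft spots. First, an index error: the obstruction group controlling approximation of functions (bidegree $(0,0)$) is $H^{n,n-1}_\Phi(X\setminus D)$, the $\Phi$-cohomology in degree one below the top, not $H^{n,1}_\Phi(X\setminus D)$; these agree only when $n=2$. Second, your treatment of the closed-range/Hausdorff issue in the direction (ii)$\Rightarrow$(i) is circular as sketched: you propose to get closed range from pseudoconvexity, but pseudoconvexity is part of what must be proved. The paper's resolution is a direct bootstrap: if $f$ is a compactly supported $(n,n)$-form in $D$ orthogonal to $\oc(D)$, it is in particular orthogonal to $\oc(X)$, hence $\opa$-exact with compact support in $X$ because $H^{n,n}_c(X)$ is Hausdorff ($X$ Stein), and then the assumed injectivity makes it exact with compact support in $D$; thus injectivity alone forces $H^{n,n}_c(D)$ to be Hausdorff. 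Relatedly, "the Hartogs/Kugelhupf phenomenon" is not enough to extract pseudoconvexity from the vanishing of $H^{n,r}_c(D)$: one needs the full theorem that a relatively compact domain in a Stein manifold with $H^{0,q}(D)=0$ for $1\le q\le n-1$ is Stein (applied after Serre duality, with the case $r=1$ supplied by the connectedness of $X\setminus D$, which is where that hypothesis actually enters). With these repairs your plan coincides with the paper's proof.
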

{\parindent=0pt More generally,  we obtain a sufficient geometric condition for the approximation of $\opa$-closed forms.}

\begin{thm}
Let $X$ be a non-compact complex manifold of complex dimension $n\geq 2$, $D\subset\subset X$ a relatively compact domain in $X$ and $q$ a fixed integer such that $0\leq q\leq n-2$. Assume  that, for any neighborhood $V$ of $X\setminus D$, there exists a domain $\Omega$ such that $X\setminus V\subset\Omega\subset D$ and $X$ is a $(q+1)$-convex extension of $\Omega$. Then, for any $0\leq p\leq n$, the space $Z^{p,q}_\infty(X)$ of $\opa$-closed smooth $(p,q)$-forms on $X$ is dense in the space $Z^{p,q}_\infty(D)$ of $\opa$-closed $(p,q)$-forms on $D$ for the topology of uniform convergence of the form and all its derivatives on compact subsets of $D$.
\end{thm}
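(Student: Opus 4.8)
The plan is to localize the statement by means of the sandwiching hypothesis and then to invoke the Andreotti--Grauert approximation theorem for $(q+1)$-convex extensions, sketching its proof by bumping. Fix $f\in Z^{p,q}_\infty(D)$, a compact set $K\subset D$, an integer $m\geq 0$ and $\varepsilon>0$. The topology on $Z^{p,q}_\infty(D)$ is the Fr\'echet topology generated by the seminorms measuring the supremum of a form and its derivatives up to a prescribed order on a prescribed compact subset of $D$, so it is enough to produce $F\in Z^{p,q}_\infty(X)$ approximating $f$ within $\varepsilon$, together with all derivatives up to order $m$, on $K$. Since $K$ is relatively compact in $D$, choose an open neighborhood $V$ of $X\setminus D$ with $\overline V\cap K=\emptyset$, so that $K\subset X\setminus V$. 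By hypothesis there is a domain $\Omega$ with $X\setminus V\subset\Omega\subset D$ for which $X$ is a $(q+1)$-convex extension of $\Omega$; in particular $\Omega$ is relatively compact in $X$, $K\subset\Omega$, and $f|_\Omega\in Z^{p,q}_\infty(\Omega)$. Thus the theorem reduces to the assertion that, \emph{if $X$ is a $(q+1)$-convex extension of a relatively compact domain $\Omega$, then $Z^{p,q}_\infty(X)$ is dense in $Z^{p,q}_\infty(\Omega)$ for the topology of uniform convergence, with all derivatives, on compact subsets of $\Omega$}; applying this to $f|_\Omega$, $K$, $m$, $\varepsilon$ concludes the proof.

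To prove the reduced assertion, let $\varphi\colon X\to\mathbb{R}$ be a smooth exhaustion function realizing the extension, normalized so that $\Omega=\{\varphi<0\}$ and $\varphi$ is $(q+1)$-convex on $\{\varphi\geq 0\}$, and set $X_c=\{\varphi<c\}$. The essential ingredient is the elementary bump lemma of Andreotti--Grauert: if $0\leq c<c'$ and $\varphi$ has no critical value in $[c,c']$ (so on the band $\{c\leq\varphi\leq c'\}$ the function $\varphi$ is $(q+1)$-convex and regular), then $Z^{p,q}_\infty(X_{c'})$ is dense in $Z^{p,q}_\infty(X_c)$ for the $\ci$-topology; the proof writes $X_{c'}=X_c\cup B$ with $B$ a thin bump, solves the $\opa$-equation in bidegree $(p,q+1)$ over $B$ --- which is legitimate precisely because $1\leq q+1\leq n-1<n$ by the hypothesis $q\leq n-2$, so that $q+1$ lies in the range of solvable degrees and is not the top degree --- and glues. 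Critical values of $\varphi$ present no new difficulty and are handled by the standard devices of Andreotti--Grauert theory (perturbing the exhaustion, and/or treating the passage of a single critical point separately). Finally, choosing $0=c_0<c_1<\cdots\to+\infty$ with $\overline{X_{c_j}}\subset X_{c_{j+1}}$ and $\bigcup_j X_{c_j}=X$, one chains the bump lemma through a routine Mittag--Leffler argument (successive approximations with summable errors, on an exhausting sequence of compacta and in increasing orders of differentiation), obtaining from any $f\in Z^{p,q}_\infty(\Omega)=Z^{p,q}_\infty(X_{c_0})$ a limit $F\in\ci_{p,q}(X)$ which is $\opa$-closed, since $\opa$ is continuous for the $\ci$-topology, and approximates $f$ to any prescribed order on any prescribed compact subset of $\Omega$. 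The whole argument is uniform in $0\leq p\leq n$, as one works throughout with the $\opa$-complex twisted by the holomorphic bundle of $(p,0)$-forms and the underlying solvability estimates do not involve $p$.

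The one genuinely substantial point is the elementary bump lemma itself --- the solvability of $\opa$ in bidegree $(p,q+1)$ on a bump attached along a $(q+1)$-convex hypersurface, together with the quantitative gluing; this is exactly the core of Andreotti--Grauert theory, and once a form-level version of their approximation theorem for $q$-convex extensions is granted, the reduced assertion, and hence the theorem, follows at once. Everything else --- the localization through $V$ and $\Omega$, the treatment of critical values, and the limiting argument --- is routine.
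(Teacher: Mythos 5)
Your argument is correct, but it reaches the conclusion by a genuinely different route than the paper does. After the (sound) localization step --- choosing the neighborhood $V$ of $X\setminus D$ disjoint from the given compact $K$ and passing to the sandwiched domain $\Omega$ with $K\subset\Omega\subset D$ --- you reduce everything to the Andreotti--Grauert approximation theorem for $(q+1)$-convex extensions, namely the density of $Z^{p,q}_\infty(X)$ in $Z^{p,q}_\infty(\Omega)$, and you sketch its classical proof by bumping plus a Mittag--Leffler exhaustion. The paper instead obtains the statement as Corollary \ref{extension} of its cohomological characterization of $q$-Runge domains: by Theorems \ref{runge} and \ref{complement}, $D$ is $q$-Runge once $H^{n-p,n-q}_c(X)$ and $H^{n-p,n-q}_c(D)$ are Hausdorff and $H^{n-p,n-q-1}_\Phi(X\setminus D)=0$ for all $p$, and these hypotheses are then read off from Theorem 16.1 of \cite{HeLe2} applied to the $(q+1)$-convex collar. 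Both proofs ultimately rest on the same hard analysis (the Andreotti--Grauert bump lemma), but you consume it through its approximation-theorem output, which lets you bypass Serre duality, the Hahn--Banach argument and the Hausdorff conditions altogether, whereas the paper consumes it through its finiteness and support-solvability outputs so as to display the geometric hypothesis as one more instance of its duality framework --- the framework that it then reuses for the $\ci(\ol D)$, $L^2$ and compact-set variants. Your approach is more direct and arguably more elementary for this particular statement, at the price of not connecting it to the rest of the paper. The one piece you leave unproved --- the elementary bump lemma and its chaining, including the handling of critical values --- is standard Andreotti--Grauert material and is clearly flagged, invoked at essentially the same level of detail as the paper's own appeal to \cite{HeLe2}.
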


We also  give an alternative proof of the Oka-Weil theorem.

\begin{thm}
Let $X$ be a Stein manifold of complex dimension $n\geq 2$ and  $K$ a compact subset of $X$. Assume $K$ is $\oc(X)$-convex, then every holomorphic function on a neighborhood of $K$ can be approximated uniformly on $K$ by holomorphic functions on $X$.
\end{thm}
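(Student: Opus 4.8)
The plan is to derive the statement from the above approximation theorem for $\opa$-closed forms, applied in bidegree $(p,q)=(0,0)$: a smooth $\opa$-closed $(0,0)$-form is just a holomorphic function, so, once a relatively compact $D$ satisfying the geometric hypothesis of that theorem is available, it yields that $Z^{0,0}_\infty(X)=\oc(X)$ is dense in $Z^{0,0}_\infty(D)=\oc(D)$ for uniform convergence on compact subsets of $D$. We shall arrange $K\subset D\subset\subset U$, where $U$ is a neighbourhood of $K$ on which the given function $f$ is holomorphic; restricting an approximation of $f|_D$ to the compact set $K$ then gives the conclusion.

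First I would build $D$. Since $X$ is Stein it carries a smooth strictly plurisubharmonic exhaustion function (Grauert), and since $K$ is $\oc(X)$-convex it has, inside $U$, a neighbourhood of the form $\{x\in X:|g_1(x)|<1,\dots,|g_N(x)|<1\}$ with $g_j\in\oc(X)$ and $\max_j\sup_K|g_j|<1$; combining such functions with the exhaustion and regularizing produces a smooth strictly plurisubharmonic exhaustion function $\rho\colon X\to\mathbb{R}$ with $K\subset D:=\{\rho<0\}\subset\subset U$. (If $D$ must be taken connected, one uses the standard reduction to the finitely many connected components of $\{\rho<0\}$ meeting the compact set $K$.) Then $D$ is relatively compact in the non-compact manifold $X$ and $f$ restricts to an element of $\oc(D)=Z^{0,0}_\infty(D)$.

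Next I would verify the geometric hypothesis with $q=0$, which is admissible because $0\le q\le n-2$ (as $n\ge2$). Let $V$ be an arbitrary neighbourhood of $X\setminus D$. Then $L:=X\setminus V$ is a closed subset of $D$ contained in the compact set $\{\rho\le0\}$, hence compact; enlarge it to a connected compact subset $\widetilde L$ with $L\subset\widetilde L\subset D$, choose $\delta>0$ with $\rho<-\delta$ on $\widetilde L$, and let $\Omega$ be the connected component of $\{\rho<-\delta\}$ containing $\widetilde L$. Then $\Omega$ is a domain with $L\subset\Omega$, and $\Omega\subset D$ since $\Omega$ is connected, lies in $\{\rho<0\}$, and meets $D$. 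Moreover $X$ is a $1$-convex extension of $\Omega$: $\Omega$ is a connected component of a sublevel set of the smooth strictly plurisubharmonic (that is, $1$-convex) exhaustion function $\rho$ of the whole of $X$, which is the most favourable situation for such an extension. The hypotheses of the above approximation theorem being met, $\oc(X)$ is dense in $\oc(D)$ for uniform convergence together with all derivatives on compact subsets of $D$; evaluating on $K\subset D$ completes the proof.

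I expect the substantive point of the argument to be the first step: the classical construction of a smooth strictly plurisubharmonic exhaustion function of $X$ that cuts out, between $K$ and $U$, a domain $D$ of analytic-polyhedron type. Once $\rho$ is in hand, checking the geometric hypothesis of the approximation theorem for $q=0$ is essentially free, since $\rho$ is globally $1$-convex; the residual work is only the bookkeeping about connectedness of $D$ and of $\Omega$, handled with the compactness of $K$ and $L$ and the connectedness of $D$, and I would not carry it out in detail.
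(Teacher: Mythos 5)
Your route is genuinely different from the paper's: the paper obtains this statement (Corollary \ref{okaweil}) from the compact-set criterion of Corollary \ref{rungecompact} together with Theorem \ref{complementKfort} and Remark \ref{O(X)convex}, i.e.\ from the neighborhood basis of pseudoconvex Runge domains and the resulting vanishing of $H^{n,n-1}_\Phi(X\setminus K)$, whereas you interpolate a sublevel set $D$ of a global strictly plurisubharmonic exhaustion between $K$ and the given neighborhood and invoke the open-domain criterion of Corollary \ref{extension}. This is not circular (Corollary \ref{extension} rests on Andreotti--Grauert theory, not on Oka--Weil), and the construction of $\rho$ with $K\subset\{\rho<0\}\subset\subset U$ from the $\oc(X)$-convexity of $K$ is indeed standard. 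The gap is in the step you set aside as bookkeeping.

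Concretely, the claim that $X$ is a $1$-convex extension of $\Omega$ fails as stated when $\Omega$ is a single connected component of $\{\rho<-\delta\}$. The paper's definition requires a strictly plurisubharmonic $\sigma$ on a neighborhood $U'$ of $X\setminus\Omega$ with $\Omega\cap U'=\{z\in U'~|~\sigma(z)<0\}$; your candidate $\sigma=\rho+\delta$ is negative on every other component of $\{\rho<-\delta\}$, and those components lie in $X\setminus\Omega$, hence in $U'$, so $\{\sigma<0\}\cap U'$ strictly contains $\Omega\cap U'$ whenever the sublevel set is disconnected --- and nothing in the construction of $\rho$ forces its sublevel sets to be connected (for disconnected $K$ they typically are not). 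The same difficulty undermines the ``standard reduction'' for disconnected $D$: applying the approximation theorem to each component meeting $K$ yields different global functions on the different pieces of $K$, and assembling them into one approximant on all of $K$ requires precisely the separation (a function in $\oc(X)$ near $0$ on one piece and near $1$ on another) that Oka--Weil is meant to provide. The natural repair is to take $D=\{\rho<0\}$ and $\Omega=\{\rho<-\delta\}$ as the full, possibly disconnected, sublevel sets, for which $\rho+\delta$ does satisfy the definition; but then you must verify that Corollary \ref{extension} and Theorem 16.1 of \cite{HeLe2} behind it remain valid for open sets with finitely many components, since both are stated for domains. Either that verification, or an argument making the sublevel sets connected, is the missing substantive step.
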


In the second part we use the  solution of the $\opa$ equation with prescribed support and the associated Serre duality to study the holomorphic approximation of holomorphic functions on a relatively compact domain $D$ of a complex manifold $X$, which are smooth up to the boundary or in  $L^2(D)$, by holomorphic functions in $X$ (in the spirit of the Mergelyan theorem) or in a neighborhood of $\ol D$ (Mergelyan property) for the associated topology.

In particular, in the $L^2$ setting, we prove the following characterization :

\begin{thm}
Let $X$ be a Stein manifold of complex dimension $n\geq 2$  and $D\subset\subset X$ a relatively compact domain in $X$ with Lipschitz boundary such that $X\setminus D$ is connected. Then
the following assertions are equivalent:

(i)  $D$ is pseudoconvex and $L^2$ holomorphic functions in $D$ can be approximated by holomorphic functions in $X$ for the $L^2$ topology on $D$;

(ii) $H^{n,r}_{\ol D, L^2}(X)=0$, for $2\leq r\leq n-1$, and the natural map
$H^{n,n}_{\ol D,L^2}(X)\to H^{n,n}_c(X)$ is injective;

(iii) ${H}^{n,q}_{\Phi,W^1}(X\setminus D)=0$, for all $1\leq q\leq n-1$.
\end{thm}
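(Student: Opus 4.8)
\emph{Strategy.} The proof parallels that of the first theorem above (the Runge-type characterization on a Stein manifold), the topology of uniform convergence on compact subsets of $D$ being replaced by the $L^2$ topology. The two engines are an $L^2$ version of Serre duality, on $X$ and on $D$, and a long exact sequence connecting the three Dolbeault cohomology groups occurring in (ii) and (iii). Throughout one uses that $X$ is Stein, so that $H^{n,r}_c(X)=0$ for $0\le r\le n-1$ while $H^{n,n}_c(X)$ is Hausdorff and canonically isomorphic to $\oc(X)'$, and that $\partial D$ is Lipschitz, which supplies the density, extension and regularity properties needed to realize $\opa$ as a closed operator with the prescribed boundary behaviour.

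\emph{Proof of} (i) $\Leftrightarrow$ (ii). Assume $D$ is pseudoconvex. Then $D$ is Stein, H\"ormander's $L^2$ estimates give $H^{0,s}_{L^2}(D)=0$ for $s\ge 1$, and $L^2$ Serre duality $H^{n,n-s}_{\ol D,L^2}(X)\simeq\bigl(H^{0,s}_{L^2}(D)\bigr)'$ yields $H^{n,r}_{\ol D,L^2}(X)=0$ for $1\le r\le n-1$. For the approximation statement, the restriction map $\oc(X)\to\oc_{L^2}(D)$ is continuous, so by the Hahn--Banach theorem its range is dense if and only if its transpose is injective; identifying $\oc_{L^2}(D)'\simeq H^{n,n}_{\ol D,L^2}(X)$ and $\oc(X)'\simeq H^{n,n}_c(X)$ by Serre duality, this transpose is exactly the natural map $H^{n,n}_{\ol D,L^2}(X)\to H^{n,n}_c(X)$. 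Hence (i)$\Rightarrow$(ii), and, reading the same equivalences backwards, (ii) together with pseudoconvexity of $D$ implies (i). That the hypotheses of (ii) in turn force $D$ to be pseudoconvex is the delicate point addressed below.

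\emph{Proof of} (ii) $\Leftrightarrow$ (iii). This is a purely homological consequence of the long exact sequence
\[
\cdots\longrightarrow H^{n,q}_{\ol D,L^2}(X)\longrightarrow H^{n,q}_c(X)\longrightarrow H^{n,q}_{\Phi,W^1}(X\setminus D)\longrightarrow H^{n,q+1}_{\ol D,L^2}(X)\longrightarrow\cdots,
\]
the $L^2$--$W^1$ analogue of the Mayer--Vietoris sequence for the pair $(X,\ol D)$ used in the smooth situation; it comes from the short exact sequence of complexes relating $L^2$-forms supported in $\ol D$, $L^2$-forms with compact support in $X$, and their restrictions to neighbourhoods of $X\setminus D$ with support in $\Phi$. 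Granting it, and using $H^{n,r}_c(X)=0$ for $r\le n-1$, a degree-by-degree chase shows that the vanishing of $H^{n,q}_{\Phi,W^1}(X\setminus D)$ for all $1\le q\le n-1$ is equivalent to the conjunction of $H^{n,r}_{\ol D,L^2}(X)=0$ for $2\le r\le n-1$ with the injectivity of $H^{n,n}_{\ol D,L^2}(X)\to H^{n,n}_c(X)$.

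\emph{Main obstacle.} The analytic core is to set up correctly the $L^2$- and $W^1$-Dolbeault complexes with prescribed supports and to check that $\opa$ has closed range in the relevant bidegrees, so that all the groups above are Hausdorff and the duality pairings---in particular $H^{n,n}_{\ol D,L^2}(X)\simeq\oc_{L^2}(D)'$ and the one underlying $H^{n,q}_{\Phi,W^1}(X\setminus D)$---are non-degenerate, and so that the $L^2$, $W^1$ and smooth versions of the groups entering the exact sequence may be identified via elliptic regularity for $\opa$ on the Stein manifold $X$; this is where the Lipschitz regularity of $\partial D$ is essential. The second, geometric, difficulty is the converse implication that the cohomological conditions force $D$ to be pseudoconvex: one deduces it from the closed-range property underlying the duality, and it is here that the assumption that $X\setminus D$ is connected is indispensable---without it the conclusion fails, as the spherical shell in $\cb^n$ shows.
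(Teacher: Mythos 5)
Your overall architecture (duality plus a comparison of the three cohomology groups) is the right one, but three essential steps are missing or would fail as stated. First, the implication that the hypotheses of (ii) force $D$ to be pseudoconvex is not an automatic consequence of ``the closed-range property underlying the duality'': it is the hearing-pseudoconvexity theorem (Theorem 5.1 in the Fu--Laurent-Thi\'ebaut--Shaw paper cited as \cite{FuLaSh}), which asserts that $H^{0,q}_{L^2}(D)=0$ for $1\leq q\leq n-1$ implies pseudoconvexity when ${\rm interior}(\ol D)=D$; this, combined with $L^2$ Serre duality, is precisely where the connectedness of $X\setminus D$ and the Lipschitz hypothesis enter, and you give no argument for it. Relatedly, your Hahn--Banach identification $\oc_{L^2}(D)'\simeq H^{n,n}_{\ol D,L^2}(X)$ presupposes that $H^{n,n}_{\ol D,L^2}(X)$ is Hausdorff, which is a hypothesis of the paper's Theorem \ref{L2runge} and not part of (ii); the paper closes this circle by an explicit argument showing that injectivity of $H^{n,n}_{\ol D,L^2}(X)\to H^{n,n}_c(X)$ already yields Hausdorffness (take $f$ supported in $\ol D$ orthogonal to $H^2(D)$, note it is orthogonal to $\oc(X)$, solve with compact support in the Stein manifold $X$, then pull the solution back into $\ol D$ by injectivity). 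You flag both points as ``the main obstacle'' but leave them unresolved, and they are the substance of the theorem. Your Serre-duality bookkeeping is also off by one degree: pseudoconvexity corresponds to $H^{n,r}_{\ol D,L^2}(X)=0$ for $2\leq r\leq n-1$ together with Hausdorffness in bidegree $(n,n)$, not to vanishing for $1\leq r\leq n-1$.

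Second, the long exact sequence you invoke for (ii) $\Leftrightarrow$ (iii) does not exist in the form you describe. The quotient of compactly supported $L^2$ forms on $X$ by those supported in $\ol D$ consists of $L^2$ restrictions to $X\setminus D$, not of $W^1$ forms with supports in $\Phi$; upgrading from $L^2$ to $W^1$ requires the interior elliptic regularity of $\opa$ (the Dolbeault isomorphism) applied on the open set $X\setminus\ol D$, and going back requires a $W^1$ extension across the Lipschitz boundary. This is exactly what the paper's Proposition \ref{L2complement} does by hand in the top degree (two explicit implications, with a cutoff and an extension operator), while the lower degrees are handled by Theorem 4.7 of \cite{FuLaSh} (relating $H^{n,q}_{\ol D,L^2}(X)=0$ to $H^{n,q-1}_{W^1_{loc}}(X\setminus\ol D)=0$) together with a comparison of the $\Phi$-supported and unrestricted groups modelled on Lemma \ref{complementsupport}. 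Without these ingredients your degree-by-degree chase has nothing to chase through, so the equivalence (ii) $\Leftrightarrow$ (iii) remains unproved.
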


We also get the following $L^2$ version of the Oka-Weil theorem:

\begin{thm}\label{geomL2}
Let $X$ be a Stein manifold of complex dimension $n\geq 2$ and let $ D\subset\subset X$ be a
relatively compact pseudoconvex domain with  Lipschitz  boundary in $X$. Assume the closure $\overline D$ of $D$ has a $\oc(X)$-convex neighborhood basis, then $L^2$ holomorphic functions in $D$ can be approximated by holomorphic functions in $X$ for the $L^2$ topology on $D$.
\end{thm}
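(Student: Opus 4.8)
\emph{Proof proposal.} Write $U_1\supset U_2\supset\cdots$ for an $\oc(X)$-convex neighborhood basis of $\ol D$, so each $U_j$ is $\oc(X)$-convex, hence Stein, and $\bigcap_j U_j=\ol D$. The plan is to factor the approximation through neighborhoods of $\ol D$: first (A) approximate $f\in L^2(D)\cap\oc(D)$ in $L^2(D)$ by functions holomorphic in some $U_j$, and then (B) approximate the latter in $L^2(D)$ by functions in $\oc(X)$; a triangle inequality then finishes the proof.

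Step (B) follows from the Oka--Weil theorem. Since the $\oc(X)$-convex hull of $\ol D$ is contained in every $\oc(X)$-convex $U_j$, it equals $\ol D$, so $\ol D$ is $\oc(X)$-convex. A function holomorphic on a neighborhood of $\ol D$ is holomorphic on some $U_j$, hence by Oka--Weil it is a uniform limit on $\ol D$ of functions in $\oc(X)$; as $D$ has finite volume, this is also $L^2(D)$-convergence.

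For Step (A), fix $f\in L^2(D)\cap\oc(D)$. Cover $\partial D$ by finitely many coordinate charts $W_1\ld W_N$ in which $\partial D$ is a Lipschitz graph. Because $X$ is Stein, its holomorphic tangent bundle is globally generated (Cartan's Theorem~A), so we may choose global holomorphic vector fields $V_1\ld V_N$ such that, after shrinking the $W_i$, $-V_i$ points uniformly into $D$ along $\partial D\cap W_i$; by the uniform cone property of the Lipschitz boundary, the holomorphic flow $\psi^i_t$ of $V_i$ then satisfies $\psi^i_{-t}(\,\ol{W_i'}\,)\subset D$ for a fixed open $W_i'$ and all small $t>0$. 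Let $\chi_0+\sum_{i\ge 1}\chi_i=1$ be a partition of unity near $\ol D$ with $\supp\chi_0\subset\subset D$ and $\supp\chi_i\subset W_i'$, and set
\[
\tilde f_t:=\chi_0 f+\sum_{i=1}^N\chi_i\,(f\circ\psi^i_{-t}),
\]
a smooth function on a fixed neighborhood $\Omega$ of $\ol D$. Each $f\circ\psi^i_{-t}$ is holomorphic on $W_i'$, so $\opa\tilde f_t=\sum_{i\ge 1}\opa\chi_i\cdot\bigl(f\circ\psi^i_{-t}-f\bigr)$ by $\sum_i\opa\chi_i=0$; since $\psi^i_{-t}\to\id$ with uniformly bounded Jacobians, $\opa\tilde f_t\to 0$ and $\tilde f_t\to f$ in $L^2(D)$ as $t\to0^+$ (standard, approximating $f$ first by continuous functions with compact support in $D$). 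Pick $j$ with $U_j\subset\Omega$; since $U_j$ is pseudoconvex, Hörmander's $L^2$ estimate gives $u_t\in L^2(U_j)$ with $\opa u_t=\opa\tilde f_t$ on $U_j$ and $\|u_t\|_{L^2(U_j)}\le C\,\|\opa\tilde f_t\|_{L^2(U_j)}\to0$. Then $h_t:=\tilde f_t-u_t\in\oc(U_j)$ and $\|h_t-f\|_{L^2(D)}\le\|\tilde f_t-f\|_{L^2(D)}+\|u_t\|_{L^2(D)}\to0$, which proves (A).

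The main obstacle is Step (A): one must make the outward dilation of the Lipschitz domain $D$ holomorphic — achieved here by replacing naive translations with flows of global holomorphic vector fields, available since $X$ is Stein — and then absorb the resulting $\opa$-error by a single $\opa$-solve with uniformly small $L^2$ norm on one fixed Stein neighborhood of $\ol D$. Note that pseudoconvexity of $D$ itself plays no essential role in this direction; it is the Steinness of $X$ together with the $\oc(X)$-convex neighborhood basis of $\ol D$ that carry the approximation.
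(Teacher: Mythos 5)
Your argument is correct and follows essentially the same route as the paper: Step (A) is precisely the paper's Theorem \ref{th:L2Mergelyan} (density of $\oc(\ol D)$ in $H^2(D)$), which it proves by citing Friedrichs' lemma for Lipschitz boundaries and then correcting with H\"ormander's $L^2$ estimate on the Stein neighborhoods, while Step (B) is its application of the Oka--Weil theorem (Corollary \ref{okaweil}); your inward holomorphic flows and partition of unity are just a hands-on re-derivation of the Friedrichs step. The one imprecision is the claim that $\psi^i_{-t}(\ol{W_i'})\subset D$ for a \emph{fixed} $W_i'$ and all small $t$ --- in fact $\tilde f_t$ is only defined on a $t$-dependent collar neighborhood of $\ol D$ --- but this is harmless since the $U_j$ form a neighborhood basis and the index $j$ may be chosen depending on $t$.
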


We will  give an example of a strictly pseudoconvex domain $\Omega$ with smooth boundary in $\cb^2$, whose closure fails to be a Runge compact subset in $\cb^2$. In fact, $\Omega$ is also not Runge.

\medskip
\noindent{\bf Example:}
Consider the domain
$$\Omega=\{(z_1,z_2)\in\cb^2~|~|z_1|^2+(|z_2|^2-2)^2<\frac{1}{4}\}.$$
It is easy to see that the  domain $\Omega$ is a bounded strictly pseudoconvex domain with smooth boundary in $\cb^2$. But $\ol\Omega$ is not $\oc(\cb^2)$-convex. The $\oc(\cb^2)$-convex hull of $\ol\Omega$ is the union of $\ol\Omega$ and the bidisc $\ol\Delta(0,\frac{1}{2})\times\ol\Delta(0,\sqrt{2})$.

We first show that its closure $\ol\Omega$ is not Runge compact subset in $\cb^2$. To see this consider the function
$$g=\frac 1{z_2}.$$
Then $g$  is holomorphic in a neighborhood of $\ol\Omega$, but
$g$ cannot be approximated by functions in $\oc(\cb^2)$ uniformly on  $\ol\Omega$. Note that the domain $\Omega$ fails  to be Runge in $\cb^2$ also.

Thus strict pseudoconvexity is not enough to guarantee the Runge property.   This  example illustrates the subtle nature of holomorphic approximation in several variables.  In comparison, holomorphic approximation in one complex variable  is much easier to describe. Our results can also be applied to the one complex variable case, which we summarize  at the end of the paper.

\section{About Runge domains in complex manifolds}\label{srunge}

Let $X$ be a complex manifold of complex dimension $n\geq 1$ and $D\subset\subset X$ a relatively compact domain in $X$.
Recall that the domain $D$ is Runge in $X$ if and only if the space $\oc(X)$ of holomorphic functions on $X$ is dense in the space $\oc(D)$ of holomorphic functions on $D$ for the topology of uniform convergence on compact subsets of $D$. We will extend the Runge property to $\opa$-closed $(p,q)$-forms.

\begin{defin}
Let $q$ be a fixed integer such that $0\leq q\leq n$. A relatively compact domain $D$ in $X$ is called a \emph{$q$-Runge domain} in $X$ if and only if, for any $0\leq p\leq n$, the space $Z^{p,q}_\infty(X)$ of $\opa$-closed smooth $(p,q)$-forms on $X$ is dense in the space $Z^{p,q}_\infty(D)$ of $\opa$-closed $(p,q)$-forms on $D$ for the topology of uniform convergence of the form and all its derivatives on compact subsets of $D$.
\end{defin}

Note that, for any $0\leq p\leq n$, $Z^{p,n}_\infty(D)=\ci_{p,n}(D)$, so any domain $D\subset\subset X$ is $n$-Runge. If $q=0$, for any $0\leq p\leq n$, $Z^{p,q}_\infty(D)$ is the space of holomorphic $p$-forms and in this case the smooth topology coincides with the uniform convergence on compact subsets, so $0$-Runge domains coincide with classical Runge domains.

\subsection{Characterization of Runge domains using Dolbeault cohomology groups}

\begin{thm}\label{runge}
Let $X$ be a non-compact complex manifold of complex dimension $n\geq 1$, $D\subset\subset X$ a relatively compact domain in $X$ and $q$ a fixed integer such that $0\leq q\leq n-1$. Assume  that, for any $0\leq p\leq n$, $H^{n-p,n-q}_c(X)$  and $H^{n-p,n-q}_c(D)$ are Hausdorff. Then $D$ is a $q$-Runge domain in $X$ if and only if, for any $0\leq p\leq n$, the natural map
$$H^{n-p,n-q}_c(D)\to H^{n-p,n-q}_c(X).$$
is injective.
\end{thm}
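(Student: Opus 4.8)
The plan is to run the Serre--duality/Hahn--Banach argument for $\opa$-closed approximation. Fix $p$ and let $u_p\colon Z^{p,q}_\infty(X)\to Z^{p,q}_\infty(D)$ be the restriction map. By the Hahn--Banach theorem, $u_p$ has dense image if and only if its transpose ${}^{t}u_p$ is injective, and $D$ is a $q$-Runge domain precisely when this holds for every $p$. So everything comes down to identifying ${}^{t}u_p$ and showing that its injectivity is equivalent to injectivity of the natural map $H^{n-p,n-q}_c(D)\to H^{n-p,n-q}_c(X)$.

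To set up the duality I would use that $\ci_{p,q}(D)$ and $\ci_{p,q}(X)$ are reflexive Fréchet spaces whose topological duals, via the pairing $(\varphi,T)\mapsto\langle T,\varphi\rangle$, are the spaces $\ec'_{n-p,n-q}(D)$ and $\ec'_{n-p,n-q}(X)$ of compactly supported $(n-p,n-q)$-currents, and that under this pairing the transpose of $\opa$ is $\pm\opa$. Since $Z^{p,q}_\infty(D)$ is the closed subspace $\{\varphi\in\ci_{p,q}(D)\mid\opa\varphi=0\}$, restriction of functionals gives a surjection $\ec'_{n-p,n-q}(D)\to(Z^{p,q}_\infty(D))'$ whose kernel is the annihilator of $Z^{p,q}_\infty(D)$; a bipolar-theorem computation (based on the fact that $Z^{p,q}_\infty(D)$ is itself the annihilator of $\opa\,\ec'_{n-p,n-q-1}(D)$ in the predual $\ci_{p,q}(D)$) identifies this annihilator with the closure $\overline{\opa\,\ec'_{n-p,n-q-1}(D)}$ inside $\ec'_{n-p,n-q}(D)$. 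The Hausdorffness of $H^{n-p,n-q}_c(D)$ says exactly that $\opa\,\ec'_{n-p,n-q-1}(D)$ is already closed (it always lies in the closed subspace of $\opa$-closed compactly supported currents), so that
\[
(Z^{p,q}_\infty(D))'\ \cong\ \ec'_{n-p,n-q}(D)\big/\opa\,\ec'_{n-p,n-q-1}(D),
\]
and likewise with $X$ in place of $D$; the inclusion of $\opa$-closed currents then yields injections $H^{n-p,n-q}_c(D)\hookrightarrow(Z^{p,q}_\infty(D))'$ and $H^{n-p,n-q}_c(X)\hookrightarrow(Z^{p,q}_\infty(X))'$. (Here I invoke, via a standard regularization argument, that compactly supported current Dolbeault cohomology coincides with the smooth one defining $H^{p,q}_c$.) Under all these identifications ${}^{t}u_p$ is simply the map induced on the quotients by extension of currents by zero, that is, by the push-forward $i_{*}\colon\ec'_{n-p,n-q}(D)\hookrightarrow\ec'_{n-p,n-q}(X)$ along the inclusion $i\colon D\hookrightarrow X$, an injective chain map.

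It then remains to match the two injectivity statements. If ${}^{t}u_p$ is injective, consider the commutative square with top row the natural map $H^{n-p,n-q}_c(D)\to H^{n-p,n-q}_c(X)$, bottom row ${}^{t}u_p$, and vertical arrows the injections above: since the left vertical arrow and the bottom arrow are injective, so is the top arrow. Conversely, assume the natural map is injective and let $T\in\ec'_{n-p,n-q}(D)$ represent a class killed by ${}^{t}u_p$; then $i_{*}T\in\opa\,\ec'_{n-p,n-q-1}(X)$, and this is where the Hausdorffness of $H^{n-p,n-q}_c(X)$ enters, upgrading membership in the closure of $\opa\,\ec'_{n-p,n-q-1}(X)$ to genuine $\opa$-exactness. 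Because $i_{*}$ is injective and commutes with $\opa$, the current $T$ is automatically $\opa$-closed, hence defines a class in $H^{n-p,n-q}_c(D)$ that maps to $0$ in $H^{n-p,n-q}_c(X)$; by hypothesis this class vanishes, i.e.\ $T\in\opa\,\ec'_{n-p,n-q-1}(D)$, which says exactly that $T$ represents $0$ in $(Z^{p,q}_\infty(D))'$. Thus ${}^{t}u_p$ is injective, and running both directions over all $p$ finishes the proof.

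The step I expect to be the real obstacle is the functional-analytic identification in the second paragraph: one dualizes the space of \emph{cocycles}, not a cohomology group; the annihilator of $Z^{p,q}_\infty(D)$ is a priori only the closure of $\opa\,\ec'_{n-p,n-q-1}(D)$; and one must verify that the two Hausdorffness hypotheses are precisely what is needed both to remove these closures and to obtain the honest inclusions $H^{n-p,n-q}_c(\cdot)\hookrightarrow(Z^{p,q}_\infty(\cdot))'$. Once that identification is secured, the rest is a short diagram chase, together with the standard observation that the smooth and current models of $H^{p,q}_c$ agree, so that the theorem is insensitive to which model is used.
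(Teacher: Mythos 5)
Your proposal is correct and is essentially the paper's own argument: both rest on Hahn--Banach plus Serre duality, with the Hausdorffness of $H^{n-p,n-q}_c(D)$ used to detect vanishing of a compactly supported class by pairing against $Z^{p,q}_\infty(D)$, and the Hausdorffness of $H^{n-p,n-q}_c(X)$ used to write a current annihilating $Z^{p,q}_\infty(X)$ as $\opa S$ with $S$ compactly supported in $X$. The only difference is presentational: you package the argument as an explicit identification of the transpose of the restriction map $Z^{p,q}_\infty(X)\to Z^{p,q}_\infty(D)$ with the map induced by extension by zero on the quotients $\ec'_{n-p,n-q}(\cdot)/\opa\,\ec'_{n-p,n-q-1}(\cdot)$, whereas the paper carries out the same two verifications directly on representatives.
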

\begin{proof}
Assume $D$ is $q$-Runge in $X$ and let $f\in Z^{n-p,n-q}_\infty(D)$ with compact support in $D$. We assume that  the cohomological class $[f]$ of $f$ vanishes in $H^{n-p,n-q}_c(X)$, which means that there exists $g\in\dc^{n-p,n-q-1}(X)$ such that $f=\opa g$. Since $H^{n-p,n-q}_c(D)$ is Hausdorff, then $[f]=0$ in $H^{n-p,n-q}_c(D)$ if and only if, for any $\opa$-closed $(p,q)$-form $\varphi\in Z^{p,q}_\infty(D)$, we have $\int_D \varphi\wedge f=0$. But, as $D$ is $q$-Runge in $X$, there exists a sequence $(\varphi_k)_{k\in\nb}$ of $\opa$-closed $(p,q)$-form in $X$ which converges to $\varphi$ uniformly on compact subsets of $D$, in particular on the support of $f$. So
$$\int_D \varphi\wedge f=\lim_{k\to\infty}\int_X \varphi_k\wedge f=\lim_{k\to\infty}\int_X \varphi_k\wedge \opa g=\pm\lim_{k\to\infty}\int_X \opa\varphi_k\wedge g=0.$$

Conversely, by the Hahn-Banach theorem, it is sufficient to prove that, for any $\opa$-closed $(p,q)$-form $g\in Z^{p,q}_\infty(D)$ and any $(n-p,n-q)$-current $T$ with compact support in $D$ such that $<T,f>=0$ for any $\opa$-closed $(p,q)$-form $f\in Z^{p,q}_\infty(X)$, we have $<T,g>=0$. Since $H^{n-p,n-q}_c(X)$ is Hausdorff, the hypothesis on $T$ implies that there exists an $(n-p,n-q-1)$-current $S$ with compact support in $X$ such that $T=\opa S$. The injectivity of the natural map
$H^{n-p,n-q}_c(D)\to H^{n-p,n-q}_c(X)$ implies that there exists an $(n-p,n-q-1)$-current $U$ with compact support in $D$ such that $T=\opa U$. Hence, for any $g\in Z^{p,q}_\infty(D)$, we get
$$<T,g>=<\opa U,g>=\pm<U,\opa g>=0.$$
\end{proof}

\begin{cor}\label{0runge}
Let $X$ be a non-compact complex manifold of complex dimension $n\geq 1$ and $D\subset\subset X$ a relatively compact domain in $X$ such that both $H^{n,n}_c(X)$  and $H^{n,n}_c(D)$ are Hausdorff. Then $D$ is a Runge domain in $X$ if and only if the natural map
$$H^{n,n}_c(D)\to H^{n,n}_c(X).$$
is injective.
\end{cor}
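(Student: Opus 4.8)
The plan is to read off Corollary \ref{0runge} as the special case $p=q=0$ of Theorem \ref{runge}. As noted right after the definition of $q$-Runge domains, the $(0,0)$-instance of the density condition, namely $\oc(X)=Z^{0,0}_\infty(X)$ dense in $\oc(D)=Z^{0,0}_\infty(D)$ for uniform convergence on compact subsets of $D$, is exactly the classical Runge property; and $H^{n,n}_c=H^{n-0,n-0}_c$. So the assertion to be proved is precisely the $p=0$ slice of Theorem \ref{runge} with $q=0$.

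The only point to verify is that the equivalence of Theorem \ref{runge} may be taken one value of $p$ at a time. Although that theorem is phrased with injectivity required for all $p$ simultaneously, its proof in fact shows, for each fixed $p$ (and $q$) and under the stated Hausdorff hypotheses, that density of $Z^{p,q}_\infty(X)$ in $Z^{p,q}_\infty(D)$ is equivalent to injectivity of the single map $H^{n-p,n-q}_c(D)\to H^{n-p,n-q}_c(X)$. In the present situation this reads as follows. For the direct implication, take a compactly supported $\opa$-closed $f\in\ci_{n,n}(D)$ whose class vanishes in $H^{n,n}_c(X)$ and write $f=\opa g$ with $g\in\dc^{n,n-1}(X)$; testing against $\varphi\in\oc(D)$, approximate $\varphi$ uniformly on $\supp f$ by $\varphi_k\in\oc(X)$ and use Stokes to obtain $\int_D\varphi\wedge f=\lim_k\int_X\varphi_k\wedge\opa g=\pm\lim_k\int_X\opa\varphi_k\wedge g=0$, whence $[f]=0$ in $H^{n,n}_c(D)$ by Hausdorffness. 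For the converse, Hahn--Banach reduces matters to showing that an $(n,n)$-current $T$ with compact support in $D$ which annihilates $\oc(X)$ also annihilates $\oc(D)$: Hausdorffness of $H^{n,n}_c(X)$ gives $T=\opa S$ with $S$ compactly supported in $X$, the injectivity hypothesis upgrades this to $T=\opa U$ with $U$ compactly supported in $D$, and then $\langle T,g\rangle=\pm\langle U,\opa g\rangle=0$ for every $g\in\oc(D)$.

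There is no substantial obstacle here; the corollary is purely a specialization of Theorem \ref{runge}. The only step meriting a little care is the Serre-duality bookkeeping behind the two uses of Hausdorffness — one on $X$, to represent the annihilator of $\oc(X)$ in the space of compactly supported $(n,n)$-currents by $\opa$-exact currents, and one on $D$ — which is exactly what the hypotheses that $H^{n,n}_c(X)$ and $H^{n,n}_c(D)$ are Hausdorff are there to provide.
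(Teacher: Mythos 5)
Your proposal is correct and is essentially the paper's own (implicit) argument: the corollary is the $p=q=0$ slice of Theorem \ref{runge}, and you rightly observe that the proof of that theorem proceeds one value of $p$ at a time, so the single Hausdorff hypotheses on $H^{n,n}_c(X)$ and $H^{n,n}_c(D)$ and the single injectivity condition suffice. Nothing further is needed.
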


It follows from the Serre duality that $H^{n-p,n-q}_c(X)$  and $H^{n-p,n-q}_c(D)$ are Hausdorff if and only if $H^{p,q+1}(X)$  and $H^{p,q+1}(D)$ are Hausdorff.
The latter condition holds in particular if these groups are finite dimensional. From the Andreotti-Grauert theory (see e.g. \cite{HeLe2}), for a complex manifold $M$, we get that $H^{p,q+1}(M)$ is finite dimensional if $M$ is either an $r$-convex complex manifold, $1\leq r\leq q+1$, or an $r$-concave complex manifolds, $1\leq r\leq n-q-2$. Moreover, by the Andreotti-Vesentini theorem (see e.g. \cite{HeLe2}, section 19), $H^{p,q+1}(M)$ is Hausdorff if $M$ is $r$-concave with $r=n-q-1$.
Finally for a $r$-convex-$s$-concave complex manifold $M$, if $r-1\leq q\leq n-s-1$, then $H^{p,q+1}(M)$ is Hausdorff, for any $0\leq p\leq n$.
%In particular, if $X=\cb^2$, Theorem \ref{runge} holds for $q=0$ if $D$ is either a pseudoconvex domain or a $1$-convex-$1$-concave annulus.

\begin{cor}\label{rungeStein}
Let $X$ be a complex manifold of complex dimension $n\geq 1$  and $D\subset\subset X$ a relatively compact domain in $X$ such that both $H^{0,1}(X)$ and $H^{0,1}(D)$ are Hausdorff (in particular, it is true for $n\geq 2$, if $X$ and $D$ are either pseudoconvex or $1$-convex-$(n-1)$-concave). Then $D$ is a Runge domain in $X$ if and only if the natural map
$H^{n,n}_c(D)\to H^{n,n}_c(X)$ is injective.
\end{cor}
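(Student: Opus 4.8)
The plan is to derive this directly from Corollary \ref{0runge} by checking that its Hausdorff hypotheses are met under the assumptions here. Corollary \ref{0runge} says that, provided $H^{n,n}_c(X)$ and $H^{n,n}_c(D)$ are Hausdorff, $D$ is Runge in $X$ if and only if the natural map $H^{n,n}_c(D)\to H^{n,n}_c(X)$ is injective. So the only thing to verify is that the hypotheses "$H^{0,1}(X)$ and $H^{0,1}(D)$ are Hausdorff" imply "$H^{n,n}_c(X)$ and $H^{n,n}_c(D)$ are Hausdorff". This is precisely the Serre duality remark made in the excerpt just after Corollary \ref{0runge}: for a complex manifold $M$ of dimension $n$, $H^{n-p,n-q}_c(M)$ is Hausdorff if and only if $H^{p,q+1}(M)$ is Hausdorff. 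Applying this with $p=0$ and $q=0$ gives that $H^{n,n}_c(M)$ is Hausdorff if and only if $H^{0,1}(M)$ is Hausdorff. Taking $M=X$ and $M=D$ in turn, the hypothesis of the present corollary is exactly equivalent to the hypothesis of Corollary \ref{0runge}, and the conclusion follows immediately.

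First I would state that $H^{n,n}_c(X)$ is Hausdorff iff $H^{0,1}(X)$ is, and likewise for $D$, invoking the Serre duality fact recorded above. Then I would simply cite Corollary \ref{0runge}. For the parenthetical claim, I would recall from the Andreotti--Grauert and Andreotti--Vesentini theory quoted in the excerpt that if $M$ (here $X$ or $D$) is pseudoconvex then $M$ is $1$-convex in the Andreotti--Grauert sense, hence $H^{p,q+1}(M)$ is finite dimensional for $q+1\le 1$, i.e. $H^{0,1}(M)$ is finite dimensional and a fortiori Hausdorff; and if $M$ is $1$-convex-$(n-1)$-concave, then with $r=1$, $s=n-1$ one has $r-1\le 0\le n-s-1=0$, so $H^{0,1}(M)$ is Hausdorff by the same quoted statement. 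This takes care of the "in particular" clause.

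The argument here is essentially bookkeeping: the real content is already in Corollary \ref{0runge} (hence in Theorem \ref{runge}) and in the Serre duality identification of the relevant cohomology groups. The only point that requires a small amount of care — and is the closest thing to an obstacle — is making sure the duality is applied with the correct indices: one wants $H^{n,n}_c(M)\cong H^{0,1}(M)'$ in the sense that Hausdorffness of one is equivalent to Hausdorffness of the other, which is the case $p=0$, $q=0$ of the general statement $H^{n-p,n-q}_c(M)$ Hausdorff $\iff$ $H^{p,q+1}(M)$ Hausdorff; there is nothing deeper to check.
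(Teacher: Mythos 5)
Your proposal is correct and follows exactly the route the paper intends: the corollary is stated without a separate proof precisely because it is the combination of Corollary \ref{0runge} with the Serre-duality remark ($H^{n,n}_c(M)$ Hausdorff $\iff$ $H^{0,1}(M)$ Hausdorff, the case $p=q=0$) and the Andreotti--Grauert/Andreotti--Vesentini facts for the parenthetical clause. Your index bookkeeping and the verification $r-1\leq 0\leq n-s-1$ for $r=1$, $s=n-1$ match the paper's quoted criteria, so there is nothing to add.
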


Using the characterization of pseudoconvexity in Stein manifolds by means of the Dolbeault cohomology with compact support, we get the following result.

\begin{cor}
Let $X$ be a Stein manifold of complex dimension $n\geq 2$  and $D\subset\subset X$ a relatively compact domain in $X$. Then  $D$ is pseudoconvex and Runge in $X$ if and only if $H^{n,r}_c(D)=0$, for $2\leq r\leq n-1$, and the natural map
$H^{n,n}_c(D)\to H^{n,n}_c(X)$ is injective.
\end{cor}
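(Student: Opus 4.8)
The plan is to combine Corollary~\ref{0runge} with the cohomological characterization of pseudoconvexity for relatively compact domains of a Stein manifold. In a form suited to the present statement, this characterization asserts that $D$ is pseudoconvex if and only if $H^{n,r}_c(D)=0$ for $2\le r\le n-1$ and $H^{n,n}_c(D)$ is Hausdorff (equivalently, by Serre duality, $H^{0,1}(D)$ is Hausdorff); I will use it as a black box. As a preliminary remark, since $X$ is Stein, Cartan's Theorem~B gives $H^{0,1}(X)=0$, so $H^{0,1}(X)$ is Hausdorff, and hence, by Serre duality, so is $H^{n,n}_c(X)$.

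For the direct implication, if $D$ is pseudoconvex and Runge then $D$ is Stein (a pseudoconvex domain of a Stein manifold is Stein), so $H^{0,q}(D)=0$ for all $q\ge1$. By Serre duality this gives $H^{n,r}_c(D)=0$ for $0\le r\le n-1$, in particular for $2\le r\le n-1$, and it shows that $H^{0,1}(D)=0$ is Hausdorff, hence so is $H^{n,n}_c(D)$. Corollary~\ref{0runge} then applies and says that the Runge property of $D$ is equivalent to injectivity of $H^{n,n}_c(D)\to H^{n,n}_c(X)$; since $D$ is Runge, this map is injective.

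For the converse, I would start from the two hypotheses: $H^{n,r}_c(D)=0$ for $2\le r\le n-1$ and $\iota\colon H^{n,n}_c(D)\to H^{n,n}_c(X)$ injective. The key observation is that the injectivity of $\iota$ already forces $H^{n,n}_c(D)$ to be Hausdorff: the closure of $\{0\}$ in $H^{n,n}_c(D)$ is mapped by the continuous map $\iota$ into the closure of $\{0\}$ in the Hausdorff space $H^{n,n}_c(X)$, i.e.\ into $\{0\}$, so it is contained in $\ke\iota=\{0\}$. By the characterization of pseudoconvexity, $D$ is therefore pseudoconvex, hence Corollary~\ref{0runge} applies again and, $\iota$ being injective, $D$ is Runge. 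If instead one uses the formulation of the characterization with the full range $1\le r\le n-1$ and no Hausdorffness clause, the missing vanishing $H^{n,1}_c(D)=0$ can be supplied from the long exact sequence
\[\cdots\to H^{n,q}_c(D)\to H^{n,q}_c(X)\to H^{n,q}_\Phi(X\setminus D)\to H^{n,q+1}_c(D)\to\cdots\]
together with $H^{n,q}_c(X)=0$ for $1\le q\le n-1$ (valid since $X$ is Stein), by which the two hypotheses become the vanishing of $H^{n,q}_\Phi(X\setminus D)$ for all $1\le q\le n-1$, reconnecting with condition~(iii) of the first theorem of the introduction.

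The one genuinely non-formal ingredient is the cohomological characterization of pseudoconvexity, which is where the complex geometry sits, and this is the step I expect to be the main obstacle; the rest is soft, using only Serre duality, the continuity of the natural maps, and Corollary~\ref{0runge}. In particular, the fact that the hypotheses preclude relatively compact connected components of $X\setminus D$ is absorbed into the characterization of pseudoconvexity and does not have to be argued separately.
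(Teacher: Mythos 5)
Your proof is correct and follows essentially the same route as the paper: both directions rest on Corollary \ref{0runge} together with the cohomological characterization of pseudoconvexity, and the key step in the converse is showing that injectivity of $H^{n,n}_c(D)\to H^{n,n}_c(X)$ forces $H^{n,n}_c(D)$ to be Hausdorff. The paper carries out that step concretely (a compactly supported $(n,n)$-form orthogonal to $\oc(D)$ is orthogonal to $\oc(X)$, hence $\opa$-exact with compact support in $X$, hence, by injectivity, with compact support in $D$), which is exactly the duality-pairing version of your abstract argument that the continuous map $\iota$ sends the closure of $\{0\}$ into $\{0\}$.
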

\begin{proof}
The necessary condition is a consequence of the characterization of pseudoconvex domains in Stein manifolds by means of the Dolbeault cohomology with compact support via the Serre duality and of Corollary \ref{0runge}.
For the sufficient condition, we have only to prove that the injectivity of the map $H^{n,n}_c(D)\to H^{n,n}_c(X)$ implies that $H^{n,n}_c(D)$ is Hausdorff. Let $f$ be a smooth $(n,n)$-form with compact support in $D$ such that $\int_D f\varphi=0$ for any holomorphic function $\varphi$ on $D$. In particular $\int_X f\varphi=0$ for any holomorphic function $\varphi$ on $X$ and  $X$ being Stein, $H^{n,n}_c(X)$ is Hausdorff and therefore $f=\opa u$ for some smooth $(n,n-1)$-form $u$ with compact support in $X$, i.e. $[f]=0$ in $H^{n,n}_c(X)$. By the injectivity of the map $H^{n,n}_c(D)\to H^{n,n}_c(X)$, we get that $f=\opa g$ for some smooth $(n,n-1)$-form $g$ with compact support in $D$, which ends the proof.
\end{proof}

\subsection{Some cohomological properties of the complement of a $q$-Runge domain}

Let us now relate the Runge property of the domain $D$ with some cohomological properties of $X\setminus D$. For any neighborhood $V$ of $X\setminus D$, we denote by $\Phi$ the family of supports in $V$, which consists of all closed subsets $F$ of $V$  such that $F\cup\ol D$ is a compact subset of $X$.
For $0\leq p,q\leq n$, we will say that
$H^{p,q}_\Phi(X\setminus D)=0$ if and only if for any neighborhood $V$ of $X\setminus D$ and for any $\opa$-closed $(p,q)$-form $f\in\ci_{p,q}(V)$ with ${\rm supp}~f\in\Phi$, there exist a neighborhood $U\subset V$ of $X\setminus D$ and a $(p,q-1)$-form $g\in\ci_{p,q-1}(U)$ with ${\rm supp}~g\subset F\in\Phi$ such that $\opa g=f$ on $U$.

\begin{thm}\label{complement}
Let $X$ be a non-compact complex manifold of complex dimension $n\geq 2$, $D\subset\subset X$ a relatively compact domain in $X$ and $q$ a fixed integer such that $0\leq q\leq n-2$. Assume  that, for any $0\leq p\leq n$, $H^{n-p,n-q}_c(X)$  and $H^{n-p,n-q}_c(D)$ are Hausdorff and $H^{n-p,n-q-1}_c(X)=0$. Then $D$ is a $q$-Runge domain in $X$ if and only if, for any $0\leq p\leq n$, $H^{n-p,n-q-1}_\Phi(X\setminus D)=0$.
\end{thm}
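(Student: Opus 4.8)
The plan is to connect the $q$-Runge property of $D$ with the vanishing of $H^{n-p,n-q-1}_\Phi(X\setminus D)$ by going through Theorem \ref{runge}, i.e. by showing that the hypotheses here make the injectivity of $H^{n-p,n-q}_c(D)\to H^{n-p,n-q}_c(X)$ equivalent to $H^{n-p,n-q-1}_\Phi(X\setminus D)=0$. Fix $p$ and write $r=n-q-1$ so that we must relate the kernel of $H^{n-p,r+1}_c(D)\to H^{n-p,r+1}_c(X)$ with $H^{n-p,r}_\Phi(X\setminus D)$. The natural tool is the long exact sequence of Dolbeault cohomology with supports for the pair: on a neighborhood $V$ of $X\setminus D$ one has, for the family $\Phi$, a sequence relating $H^{n-p,\bullet}_c(D)$ (forms with compact support in the ``interior'' piece), $H^{n-p,\bullet}_c$ of the ambient compactly-supported cohomology near $\ol D$, and $H^{n-p,\bullet}_\Phi(X\setminus D)$. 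Since $H^{n-p,n-q-1}_c(X)=H^{n-p,r}_c(X)=0$ by hypothesis, the connecting map in this sequence will identify $H^{n-p,r}_\Phi(X\setminus D)$ with the kernel of $H^{n-p,r+1}_c(D)\to H^{n-p,r+1}_c(X)$, which is exactly the obstruction appearing in Theorem \ref{runge}.

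More concretely, I would argue directly as follows. Suppose first $D$ is $q$-Runge; by Theorem \ref{runge} the map $H^{n-p,n-q}_c(D)\to H^{n-p,n-q}_c(X)$ is injective. Given a neighborhood $V$ of $X\setminus D$ and a $\opa$-closed $f\in\ci_{n-p,r}(V)$ with $\supp f\in\Phi$, extend $f$ by zero to a form on all of $X$ (it is smooth since its support is closed in $V$ and avoids a neighborhood of the part of $\ol D$ far from $X\setminus D$—one has to be a little careful, choosing $V$ and cutting off appropriately, but $\supp f\cup\ol D$ compact is what makes this work). Then $\opa f$ is a compactly supported $\opa$-closed $(n-p,r+1)$-form supported in $D$; its class in $H^{n-p,r+1}_c(X)$ vanishes because $f$ itself is a global primitive with support having compact closure together with $\ol D$, hence by injectivity its class in $H^{n-p,r+1}_c(D)$ vanishes, giving $v\in\dc^{n-p,r}(D)$ with $\opa v=\opa f$ there. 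Then $g:=f-v$ is $\opa$-closed on a neighborhood of $X\setminus D$, agrees with $f$ near $X\setminus D$, but now we still need $\opa g=f$, so instead one should run the argument one degree down: use $H^{n-p,r}_c(X)=0$ to kill the ambient obstruction and produce, from $f$, a compactly supported global primitive, then correct it by something supported in $D$ using injectivity in degree $r+1$. The corrected primitive, restricted near $X\setminus D$, is the desired $g$.

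For the converse, assume $H^{n-p,r}_\Phi(X\setminus D)=0$ for all $p$; I must show the map $H^{n-p,r+1}_c(D)\to H^{n-p,r+1}_c(X)$ is injective, then invoke Theorem \ref{runge}. Take $f\in\dc^{n-p,r+1}(D)$, $\opa$-closed, with $f=\opa G$ for some $G\in\dc^{n-p,r}(X)$. Since $H^{n-p,r}_c(X)=0$ and $f$ has support in $D$, we may modify $G$ away from $D$: $G$ is $\opa$-closed on $X\setminus \ol D$ — actually on a neighborhood $V$ of $X\setminus D$ — and has support in $\Phi$, so the vanishing of $H^{n-p,r}_\Phi(X\setminus D)$ produces $h$ on a smaller neighborhood $U$ of $X\setminus D$, with $\supp h\in\Phi$, solving $\opa h=G$ on $U$. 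Patching $G-\opa h$ (extended suitably by $G$ away from $U$, using a cutoff supported where everything is smooth) yields a compactly supported primitive of $f$ with support in $D$, so $[f]=0$ in $H^{n-p,r+1}_c(D)$. The Hausdorff hypotheses are used exactly to apply Theorem \ref{runge} in both directions.

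I expect the main obstacle to be the careful bookkeeping of supports and neighborhoods in the patching steps: matching up the ``neighborhood $V$ / smaller neighborhood $U$'' quantifiers in the definition of $H^{p,q}_\Phi(X\setminus D)=0$ with the extension-by-zero and cutoff constructions, and ensuring that the corrected primitives genuinely have support in $\Phi$ (so that $\supp\cup\ol D$ stays compact) rather than merely closed. The cohomological skeleton — connecting map in a long exact sequence of a pair, with the middle term killed by the hypothesis $H^{n-p,n-q-1}_c(X)=0$ — is straightforward; it is the translation into the hands-on ``solve $\opa$ with prescribed support on shrinking neighborhoods'' language of the statement that requires care.
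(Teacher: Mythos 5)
Your proposal follows essentially the same route as the paper: both directions reduce to Theorem \ref{runge} via cutoff-and-patch arguments, with the injectivity of $H^{n-p,n-q}_c(D)\to H^{n-p,n-q}_c(X)$ used to correct $\opa(\chi f)$ by a form compactly supported in $D$, and the hypothesis $H^{n-p,n-q-1}_c(X)=0$ used (only in the necessity direction, exactly as in the paper) to produce the primitive. The one place your writeup wobbles is the final recap of the necessity direction, where you propose to ``produce, from $f$, a compactly supported global primitive, then correct it'': the correction must come first, since $\chi f$ is not globally $\opa$-closed --- the right order, which your own intermediate construction of $v$ already realizes, is to form the globally $\opa$-closed, compactly supported form $\chi f-v$ and only then apply $H^{n-p,n-q-1}_c(X)=0$ to obtain a compactly supported primitive $g$, whose restriction to the neighborhood where $\chi\equiv 1$ and $v\equiv 0$ solves $\opa g=f$ with support in $\Phi$.
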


\begin{proof}
Assume $D$ is a $q$-Runge domain in $X$ and consider a neighborhood $V$ of $X\setminus D$ and a $\opa$-closed $(n-p,n-q-1)$-form $f\in\ci_{n-p,n-q-1}(V)$ with ${\rm supp}~ f\in\Phi$. Let $\chi$ be a positive smooth function with support in $V$ and equal to $1$ on a neighborhood $W$ of $X\setminus D$. Then $\chi f$ defines a form $\wt f$ such that $\opa\wt f$ has  compact support in $D$ and the cohomological class $[\opa\wt f]=0$ in $H^{n-p,n-q}_c(X)$. Then by Theorem \ref{runge}, $[\opa\wt f]=0$ in $H^{n-p,n-q}_c(D)$, which means that there exists a smooth $(n-p,n-q-1)$-form $u\in\ci_{n-p,n-q-1}(X)$ with compact support in $D$ such that $\opa u=\opa\wt f$. Set $h=\wt f-u$, then $h=f$ on a neighborhood $U$ of $X\setminus D$ and $\opa h=0$ on $X$. Since $H^{n-p,n-q-1}_c(X)=0$, there exists $g\in\ci_{n-p,n-q-2}(X)$ with compact support in $X$  such that $\opa g=h$ on $X$, which implies ${\rm supp}~g_{|U}\subset F\in\Phi$ and $\opa g_{|_U}=f$ on $U$.

Conversely we will prove that the natural map
$H^{n-p,n-q}_c(D)\to H^{n-p,n-q}_c(X)$ is injective, which, by Theorem \ref{runge}, implies that $D$ is a Runge domain in $X$. Let $f$ be a smooth $(n-p,n-q)$-form with compact support in $D$ such that $f=\opa g$ for a smooth $(n-p,n-q-1)$-form with compact support in $X$. Then $\opa g=0$ on some neighborhood $V$ of $X\setminus D$ and ${\rm supp}~g_{|_V}\in\Phi$ and by hypothesis there exist a neighborhood $U\subset V$ of $X\setminus D$ and an $(n-p,n-q-2)$-form $h\in\ci_{n-p,n-q-2}(U)$ with ${\rm supp}~h\subset F\in\Phi$ such that $\opa h=g$ on $U$. Let $\chi$ be positive smooth function with support in $U$ and equal to $1$ on an neighborhood $W$ of $X\setminus D$. Then $\chi h$ defines a smooth $(n-p,n-q-2)$-form $\wt h$ on $X$ with $\wt h=h$ on $W$ and if $u=g-\opa\wt h$, then $\opa u=\opa g=f$ and the support of $u$ is a compact subset of $D$.
\end{proof}

Note that the hypothesis $H^{n-p,n-q-1}_c(X)=0$ is only used to prove the necessary condition, i.e. if $D$ is a $q$-Runge domain in $X$, then  for any $0\leq p\leq n$, $H^{n-p,n-q-1}_\Phi(X\setminus D)=0$.

\begin{lem}\label{complementsupport}
Let $X$ be a Stein manifold of complex dimension $n\geq 2$ and $D$ a relatively compact domain in $X$. Then, for any $0\leq p\leq n$
and $1\leq q\leq n-2$, $H^{n-p,n-q-1}_\Phi(X\setminus D)=0$ if and only if $H^{n-p,n-q-1}(X\setminus D)=0$. Moreover if $H^{n-p,n-1}_\Phi(X\setminus D)=0$ for some $0\leq p\leq n$ then $H^{n-p,n-1}(X\setminus D)$ is Hausdorff.
\end{lem}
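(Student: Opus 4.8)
The statement compares the cohomology $H^{n-p,n-q-1}_\Phi(X\setminus D)$ defined via the family of supports $\Phi$ with the ordinary Dolbeault cohomology $H^{n-p,n-q-1}(X\setminus D)$ of the (non-compact) manifold $X\setminus D$. The plan is to exploit the fact that $X$ is Stein, so that one can solve $\opa$ with bounds and with control of supports near the compact set $\ol D$, and that $X\setminus D$ is cohomologically accessible in the relevant bidegrees. The key structural observation is that a form with support in $\Phi$ is, roughly, a form on $X\setminus D$ whose support meets $\ol D$ in a compact set; conversely a general $\opa$-closed form on $X\setminus D$ can be modified near infinity (i.e. near $\ol D$) to have support in $\Phi$ after subtracting a $\opa$-exact correction, because the relevant compactly-supported (or prescribed-support) Dolbeault groups of the Stein manifold $X$ vanish in these bidegrees.

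First I would prove the implication $H^{n-p,n-q-1}_\Phi(X\setminus D)=0 \Rightarrow H^{n-p,n-q-1}(X\setminus D)=0$. Take a $\opa$-closed smooth $(n-p,n-q-1)$-form $f$ on $X\setminus D$. Since $X$ is Stein and $1\le q\le n-2$, we have $H^{n-p,n-q}(X)=0$ and also $H^{n-p,n-q}_c(X)=0$ (by Serre duality with $H^{p,q}(X)=0$ for $q\ge 1$ on a Stein manifold). Extend $f$ by a cutoff $\chi$ equal to $1$ near $X\setminus D$ and supported in a slightly larger neighborhood $V$; then $\opa(\chi f)$ has compact support in $D$ and its class in $H^{n-p,n-q}_c(X)$ vanishes, so there is $u$ with compact support in $X$ with $\opa u = \opa(\chi f)$. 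Replacing $\chi f$ by $\chi f - u$ gives a global $\opa$-closed form $h$ on $X$ agreeing with $f$ near $X\setminus D$; but $h$ need not have support in $\Phi$. To fix this, restrict attention to a neighborhood $V$ of $X\setminus D$: the form $\chi f$ itself, viewed on $V$, is $\opa$-closed off a compact piece, and one uses the hypothesis $H^{n-p,n-q-1}_\Phi(X\setminus D)=0$ (applied after first killing $\opa(\chi f)$ within $\Phi$, again via Theorem \ref{runge}/Theorem \ref{complement} machinery since $H^{n-p,n-q}_c(X)=0$ is certainly Hausdorff) to solve $\opa g = f$ on a possibly smaller neighborhood $U$ of $X\setminus D$ with $\supp g\in\Phi$; extending $g$ by a cutoff and using $H^{n-p,n-q-1}_c(X)=0$ or $H^{n-p,n-q}(X)=0$ to patch the global error yields a solution of $\opa G = f$ on all of $X\setminus D$.

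For the converse, $H^{n-p,n-q-1}(X\setminus D)=0 \Rightarrow H^{n-p,n-q-1}_\Phi(X\setminus D)=0$: given $V\supset X\setminus D$ and $f\in\ci_{n-p,n-q-1}(V)$, $\opa$-closed, with $\supp f\in\Phi$, view $f$ as defined on $X\setminus D$; by hypothesis there is $g\in\ci_{n-p,n-q-2}(X\setminus D)$ with $\opa g = f$ there, but $g$ need not have support in $\Phi$. The support of $g$ is controlled near $\ol D$ only; away from a neighborhood of $\ol D$, $f=0$, so $g$ is $\opa$-closed there, and I would use the Stein hypothesis on $X$ — specifically $H^{n-p,n-q-1}(X)=0$ when $n-q-1\ge 1$, i.e. $q\le n-2$, which holds — together with a Mayer–Vietoris / cutoff argument near the boundary of a large relatively compact piece of $X\setminus D$ to correct $g$ by a $\opa$-closed form so that the corrected primitive has support in $\Phi$. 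The degenerate top case $n-q-1$ arbitrary is excluded since $q\ge 1$ forces $n-q-1\le n-1$, keeping us out of bidegree $(\cdot,n)$ issues. Finally, for the "moreover" clause: $H^{n-p,n-1}_\Phi(X\setminus D)=0$ plus the same extension-and-correction scheme shows that the range of $\opa$ into $\ci_{n-p,n-1}(X\setminus D)$ is closed — indeed an $(n-p,n-1)$-form $f$ on $X\setminus D$ that is a limit of $\opa$-exact forms can, after localizing near $X\setminus D$ and using that $X$ Stein gives $H^{n-p,n}_c(X)$ Hausdorff, be shown $\opa$-exact via the $\Phi$-solvability — so $H^{n-p,n-1}(X\setminus D)$ is Hausdorff.

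The main obstacle I expect is the bookkeeping of supports and neighborhoods: the $\Phi$-cohomology is "pro-object" in character (the neighborhood $U$ may shrink, the support $F$ may vary), so the delicate point is arranging that the $\opa$-solutions produced on $X$ (global, Stein, compact support) restrict to $\Phi$-admissible solutions near $X\setminus D$ without losing the support condition $F\cup\ol D\Subset X$. This is exactly the kind of cut-off-and-patch argument already carried out in the proof of Theorem \ref{complement}, and the proof here should mirror that structure closely, with the Stein vanishing $H^{p,q}(X)=0$ ($q\ge 1$) replacing the more general Hausdorff hypotheses.
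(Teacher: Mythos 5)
There is a genuine gap in your forward implication, $H^{n-p,n-q-1}_\Phi(X\setminus D)=0\Rightarrow H^{n-p,n-q-1}(X\setminus D)=0$. You propose to ``use the hypothesis $H^{n-p,n-q-1}_\Phi(X\setminus D)=0$ \dots\ to solve $\opa g=f$ on a possibly smaller neighborhood $U$ of $X\setminus D$ with $\supp g\in\Phi$.'' For a general $\opa$-closed $f$ this is impossible: if $\supp g\in\Phi$ then $\supp\opa g\subset\supp g$ also belongs to $\Phi$, whereas $\supp f$ need not. The family $\Phi$ constrains the support \emph{at infinity} of $X$ (it requires $F\cup\ol D$ to be compact), and neither your cut-off $\chi$ (equal to $1$ near $X\setminus D$) nor the compactly supported correction $u$ changes the behaviour of $f$ outside a compact set; so $\chi f$, $\chi f-u$, and $f$ itself are all, in general, not supported in $\Phi$, and the $\Phi$-vanishing hypothesis simply does not apply to them. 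The entire content of this implication is the step that makes $f$ exact near infinity, and that step is absent from your argument. The paper supplies it by choosing a relatively compact strictly pseudoconvex $U\supset\ol D$ with $\cc^2$ boundary and invoking $H^{n-p,n-q-1}(X\setminus\ol U)=0$ (Proposition 1.1 in \cite{Lacalotte}, a duality consequence of $H^{p,q}(\ol U)=0$): writing $f=\opa g$ on $X\setminus\ol U$ and subtracting $\opa(\chi g)$ with $\chi\equiv 1$ near infinity and $\supp\chi\subset X\setminus\ol U$ yields a cohomologous form whose support does lie in $\Phi$.

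For $1\leq q\leq n-2$ your ingredients could in fact be reassembled without \cite{Lacalotte}: with $\wt f=\chi f$ and $u$ as you construct them, $h=\wt f-u$ is globally $\opa$-closed of bidegree $(n-p,n-q-1)$ with $1\leq n-q-1\leq n-2$, so $h=\opa G$ on the Stein manifold $X$; then $f-\opa G=u$ near $X\setminus D$, and \emph{this} form has compactly supported restriction, hence support in $\Phi$, so the hypothesis applies to it and gives $f=\opa(G+w)$. But you do not carry out this identification of the $\Phi$-supported representative, and the device fails for the ``moreover'' clause: there the relevant group is $H^{n-p,n}_c(X)\neq 0$, and the paper's use of the Hausdorffness of $H^{n-p,n-1}(X\setminus\ol U)$ (again from Proposition 1.1 in \cite{Lacalotte}) is essential, while your sketch of that part is too vague to constitute a proof. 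Your converse direction (cut off $g$ near $\ol D\cup\supp f$, extend $\opa\chi\wedge g$ by zero, and correct by a compactly supported primitive using $H^{n-p,n-q-1}_c(X)=0$) is essentially the paper's argument and is sound in outline.
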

\begin{proof}
Let us prove the necessary condition in the first assertion.
Since $X$ is a Stein manifold, there exists a relatively compact strictly pseudoconvex set with $\cc^2$ boundary $U$ in $X$ such that $\ol D\subset U$.
%$X$ is a $1$-convex extension of $U$, so $\ol U$ admits a Stein neighborhood basis.
It follows that $H^{p,q}(\ol U)=0$, for any $0\leq p\leq n$ and $1\leq q\leq n-1$, hence, by Proposition 1.1 in \cite{Lacalotte}, $H^{n-p,n-q-1}(X\setminus\ol U)=0$, for any $0\leq p\leq n$ and $1\leq q\leq n-2$, and $H^{n-p,n-1}(X\setminus\ol U)$ is Hausdorff, for any $0\leq p\leq n$.
First assume that $f$ is a smooth, $\opa$-closed $(n-p,n-q-1)$-form on a neighborhood of $X\setminus D$, then there exists a smooth $(n-p,n-q-2)$-form  $X\setminus\ol U$ such that $f=\opa g$ on $X\setminus\ol U$. Let  $\chi$ be a smooth function equal to $1$ on $X\setminus V$ for some neighborhood $V\subset\subset X$ of $\ol U$ and with support in $X\setminus\ol U$, then the support of $f-\opa(\chi g)$ belongs to $\Phi$. Since $H^{n-p,n-q-1}_\Phi(X\setminus D)=0$, $f-\opa\chi g=\opa u$ on a neighborhood of $X\setminus D$ and so $f=\opa(\chi g+ u)$.

Conversely, we will prove that the natural map $H^{n-p,n-q-1}_\Phi(X\setminus D)\to H^{n-p,n-q-1}(X\setminus D)$ is injective for any $0\leq p\leq n$ and $1\leq q\leq n-1$. Therefore $H^{n-p,n-q-1}(X\setminus D)=0$ will imply $H^{n-p,n-q-1}_\Phi(X\setminus D)=0$ for any $0\leq p\leq n$
and $1\leq q\leq n-2$. Let $f$ be a smooth, $\opa$-closed $(n-p,n-q-1)$-form on a neighborhood $V$ of $X\setminus D$ and whose support belongs to $\Phi$. Assume there exists a smooth $(n-p,n-q-2)$-form $g$ defined on a neighborhood $U\subset V$ of $X\setminus D$ and such that $f=\opa g$ on $U$. Consider a function $\chi$ {\color{red} with compact support in $X$} such that $\chi\equiv 1$ on a neighborhood of $\ol D\cup {\rm supp}f$. We set $\wt g=\chi g$. Then $\opa\wt g=\opa\chi\wedge g+\chi\opa g=\opa\chi\wedge g+f$ and the form $\opa\chi\wedge g$ can be extended by $0$ to a $\opa$-closed $(n-p,n-q-1)$-form with compact support in $X$. Since $X$ is Stein, there is an $h\in\dc^{n-p,n-q-2}(X)$such that $\opa h=\opa\chi\wedge g$ on $X$ and it follows that $\opa\wt g=\opa h+f$ on $U$. Then the support of  $u=\wt g-h$ belongs to $\Phi$ because $h$ has compact support in $X$ and $\opa u=f$ on $U$.

For the second assertion, let $f$ be a smooth, $\opa$-closed $(n-p,n-1)$-form on a neighborhood $V$ of $X\setminus D$ such that $\int f\wedge\varphi=0$ for any $\opa$-closed smooth $(p,1)$-form with compact support in a neighborhood $U\subset V$ of $X\setminus D$. Since $H^{n-p,n-1}(X\setminus\ol U)$ is Hausdorff and $X\setminus U\subset X\setminus D$, there exists a smooth $(n-p,n-q-1)$-form on $X\setminus\ol U$ such that $f=\opa g$ on $X\setminus\ol U$. Then we can repeat the proof as before for the necessary condition.
\end{proof}

\begin{prop}\label{pseudoconvexity}
Let $X$ be a Stein manifold of complex dimension $n\geq 2$ and $D$ a relatively compact domain in $X$. Then, for any  $1\leq q\leq n-2$, $H^{n,q}_c(D)=0$ if and only if $H^{n,q-1}(X\setminus D)=0$. Moreover if $H^{n,n-1}_\Phi(X\setminus D)=0$, then $H^{n,n}_c(D)$ is Hausdorff.
\end{prop}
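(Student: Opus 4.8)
The plan is to deduce this statement from the already-established machinery, in particular Lemma \ref{complementsupport} and the characterization of vanishing of compactly supported Dolbeault cohomology via Serre duality. The key observation is that $H^{n,q}_c(D)$ vanishes precisely when $D$ satisfies a $(q+1)$-convexity-type condition, which in a Stein ambient manifold is detected by the cohomology of the complement. First I would invoke Serre duality on $D$: since $X$ is Stein and $D\subset\subset X$, for $1\leq q\leq n-2$ the group $H^{0,q+1}(D)$ and $H^{n,q}_c(D)$ form a dual pair once the relevant Hausdorff property is known, and $H^{n,q}_c(D)=0$ is equivalent to the statement that every $\opa$-closed $(n,q)$-form with compact support in $D$ is $\opa$-exact with compact support in $D$. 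I would then translate this into a statement about $X\setminus D$ using the same cut-off/extension argument as in the proof of Theorem \ref{complement}: given a $\opa$-closed $(n,q-1)$-form on a neighborhood of $X\setminus D$ (with appropriate support), extend it by a cut-off, take $\opa$ to land in a compactly supported $(n,q)$-form in $D$, and use the vanishing/nonvanishing of $H^{n,q}_c(D)$ to either solve or obstruct the equation near $X\setminus D$.

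More precisely, for the direction $H^{n,q}_c(D)=0 \Rightarrow H^{n,q-1}(X\setminus D)=0$: let $f$ be $\opa$-closed smooth $(n,q-1)$ on a neighborhood $V$ of $X\setminus D$. Using that $X$ is Stein, I would first solve $\opa g = f$ on $X$ itself after extending — but more carefully, following the pattern of Lemma \ref{complementsupport}, reduce to a form with support in $\Phi$, apply $H^{n,q-1}_\Phi(X\setminus D)=0$, and note that the latter is equivalent to $H^{n,q-1}(X\setminus D)=0$ by Lemma \ref{complementsupport} (valid for $1\leq q-1\leq n-2$, i.e. $2\leq q\leq n-1$; the boundary case $q=1$ needs the $\Phi$-version directly or a separate check using that $H^{n,0}(X\setminus D)$ behaves well). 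The link between $H^{n,q}_c(D)=0$ and $H^{n,q-1}_\Phi(X\setminus D)=0$ is exactly the content of the equivalence (iii)$\Leftrightarrow$(ii) flavor results: take a $\opa$-closed $(n,q-1)$-form near $X\setminus D$, multiply by a cut-off $\chi$ equal to $1$ near $X\setminus D$ with support in $V$, so $\opa(\chi f)$ has compact support in $D$ and represents a class in $H^{n,q}_c(D)$; vanishing of that group gives a compactly supported primitive $u$ in $D$, and $\chi f - u$ is then $\opa$-closed on all of $X$, agrees with $f$ near $X\setminus D$, and since $X$ is Stein $H^{n,q-1}_c(X)$-vanishing (as $q-1<n$) lets us solve again to produce the desired primitive near $X\setminus D$. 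Running the same argument in reverse — starting from a compactly supported $\opa$-closed $(n,q)$-form in $D$, solving $\opa g$ with compact support in $X$ (Stein), restricting $g$ to a neighborhood of $X\setminus D$ where it is $\opa$-closed with support in $\Phi$, applying $H^{n,q-1}_\Phi(X\setminus D)=0$ to get $h$ with $\opa h = g$ near $X\setminus D$, and patching $g - \opa(\chi h)$ — yields a compactly supported primitive in $D$.

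For the last assertion, $H^{n,n-1}_\Phi(X\setminus D)=0 \Rightarrow H^{n,n}_c(D)$ Hausdorff: here I would argue exactly as in the proof of the corollary characterizing pseudoconvex-and-Runge domains earlier in the excerpt. Let $f$ be a smooth $(n,n)$-form with compact support in $D$ annihilating all holomorphic functions on $D$; a fortiori it annihilates $\oc(X)$, and since $X$ is Stein $H^{n,n}_c(X)$ is Hausdorff, so $f=\opa u$ with $u$ compactly supported in $X$. Then $u$ is $\opa$-closed near $X\setminus D$ with support in $\Phi$, and $H^{n,n-1}_\Phi(X\setminus D)=0$ produces $h$ with $\opa h = u$ near $X\setminus D$; cutting off, $u - \opa(\chi h)$ is a compactly-supported-in-$D$ primitive of $f$, so $[f]=0$ in $H^{n,n}_c(D)$, giving Hausdorffness. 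The main obstacle I anticipate is handling the endpoint index $q=1$ in the first equivalence, where Lemma \ref{complementsupport} does not directly apply to pass between the $\Phi$-supported and ordinary cohomology of $X\setminus D$; there one must either work with the $\Phi$-version throughout or exploit that $(n,0)$-forms on $X\setminus D$ are holomorphic $n$-forms and use the connectedness/Hartogs-type behavior of $X\setminus D$ in the Stein manifold $X$. A secondary technical point is verifying all the Hausdorff hypotheses needed to legitimately invoke Serre duality on $D$, but these follow from the Andreotti–Grauert remarks recorded after Corollary \ref{0runge} together with the pseudoconvexity that $H^{n,q}_c(D)=0$ itself encodes.
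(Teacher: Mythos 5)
Your core mechanism is the paper's: cut off near $X\setminus D$, use $H^{n,q}_c(D)=0$ (respectively solvability of $\opa$ on the Stein manifold $X$) to produce a correction with compact support, and patch; and your argument for the last assertion (Hausdorffness of $H^{n,n}_c(D)$ from $H^{n,n-1}_\Phi(X\setminus D)=0$) is exactly the one in the paper. The packaging differs, however. The paper neither invokes Serre duality in this proof nor routes the first equivalence through $H^{n,q-1}_\Phi(X\setminus D)$ and Lemma \ref{complementsupport}: it proves $H^{n,q}_c(D)=0\Leftrightarrow H^{n,q-1}(X\setminus D)=0$ directly, with no support condition on the data. In the forward direction $f$ is an arbitrary $\opa$-closed $(n,q-1)$-form on a neighborhood of $X\setminus D$, the class of $\opa(\chi f)$ is killed in $H^{n,q}_c(D)$, and the resulting globally $\opa$-closed form $h=\chi f-u$ is solved on all of $X$; in the reverse direction the hypothesis is applied to the restriction of $g$ near $X\setminus D$ without needing its support to lie in $\Phi$. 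This direct route dispenses with Lemma \ref{complementsupport} altogether and hence with the passage between the $\Phi$-supported and ordinary cohomologies that worried you. Two corrections to your write-up: first, the form $h=\chi f-u$ is \emph{not} compactly supported (its support contains a neighborhood of the non-compact set $X\setminus D$), so the group you invoke at that step, $H^{n,q-1}_c(X)$, is the wrong one; what is needed, and what vanishes on a Stein manifold when $q\geq 2$, is the ordinary group $H^{n,q-1}(X)$. Second, your instinct that $q=1$ is genuinely delicate survives even on the paper's direct route, since there the final solving step would require an $(n,-1)$-form; the forward implication, in the paper as in your proposal, really only functions for $2\leq q\leq n-2$, so flagging that endpoint was the right call rather than an artifact of your detour.
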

\begin{proof}
Assume $H^{n,q}_c(D)=0$ and consider a neighborhood $V$ of $X\setminus D$ and a $\opa$-closed $(n,q-1)$-form $f\in\ci_{n,q-1}(V)$. Let $\chi$ be a positive smooth function with support in $V$ and equal to $1$ on a neighborhood $W$ of $X\setminus D$. Then $\chi f$ defines a form $\wt f$ such that $\opa\wt f$ has  compact support in $D$. Since $H^{n,q}_c(D)=0$, there exists a smooth $(n-p,n-q-1)$-form $u\in\ci_{n-p,n-q-1}(X)$ with compact support in $D$ such that $\opa u=\opa\wt f$. Set $h=\wt f-u$, then $h=f$ on a neighborhood $U$ of $X\setminus D$ and $\opa h=0$ on $X$. Since $X$ is Stein, there exists $g\in\ci_{n-p,n-q-2}(X)$ such that $\opa g=h$ on $X$, which implies  $\opa g_{|_U}=f$ on $U$.

Conversely assume $H^{n,q-1}(X\setminus D)=0$. Let $f$ be a smooth $(n,q)$-form with compact support in $D$, then $f=\opa g$ for a smooth $(n,q-1)$-form $g$ with compact support in $X$, since $X$ is Stein. Then $\opa g=0$ on some neighborhood $V$ of $X\setminus D$  and by hypothesis there exists a neighborhood $U\subset V$ of $X\setminus D$ and an $(n-p,n-q-2)$-form $h\in\ci_{n-p,n-q-2}(U)$ such that $\opa h=g$ on $U$. Let $\chi$ be positive smooth function with support in $U$ and equal to $1$ on an neighborhood $W$ of $X\setminus D$. Then $\chi h$ defines a smooth $(n-p,n-q-2)$-form $\wt h$ on $X$ with $\wt h=h$ on $W$ and if $u=g-\opa\wt h$, then $\opa u=\opa g=f$ and the support of $u$ is a compact subset of $D$.

Assume $H^{n,n-1}_\Phi(X\setminus D)=0$. Let $f$ be a smooth $(n,n)$-form with compact support in $D$, which is orthogonal to any holomorphic function on $D$. In particular $\int_X f\varphi=0$ for any holomorphic function $\varphi$ on $X$ and $X$ being Stein, $H^{n,n}_c(X)$ is Hausdorff and therefore $f=\opa g$ for some smooth $(n,n-1)$-form $g$ with compact support in $X$. Then $\opa g=0$ on some neighborhood $V$ of $X\setminus D$ and the support of $g_{|_V}$ belongs to $\Phi$. By hypothesis there exists a neighborhood $U\subset V$ of $X\setminus D$ and an $(n,n-2)$-form $h\in\ci_{n,n-2}(U)$ whose support is included in some $F$ which  belongs to $\Phi$ and such that $\opa h=g$ on $U$. Repeating the same arguments, the proposition is proved.
\end{proof}

The next corollary follows directly from Theorem \ref{complement}, Proposition \ref{pseudoconvexity}, Lemma \ref{complementsupport} and the characterization of pseudoconvexity in Stein manifolds by means of the Dolbeault cohomology with compact support.

\begin{cor}\label{caracrunge}
Let $X$ be a Stein manifold of complex dimension $n\geq 2$  and $D\subset\subset X$ a relatively compact domain in $X$ such that $X\setminus D$ is connected. Then  $D$ is pseudoconvex and Runge in $X$ if and only if $H^{n,q}_\Phi(X\setminus D)=0$ for all $1\leq q\leq n-1$.
\end{cor}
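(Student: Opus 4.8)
The plan is to deduce this corollary by combining the characterization of pseudoconvexity in a Stein manifold by means of the Dolbeault cohomology with compact support with Proposition \ref{pseudoconvexity}, Lemma \ref{complementsupport} and Theorem \ref{complement} (taken for $q=0$, i.e. for classical Runge domains), translating the vanishing of the family-of-supports cohomology degree by degree as $q$ runs over $1,\dots,n-1$. I would treat the interior range $1\le q\le n-2$ separately from the top degree $q=n-1$.

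For $1\le q\le n-2$, Lemma \ref{complementsupport} (with $p=0$) identifies the vanishing $H^{n,q}_\Phi(X\setminus D)=0$ with the ordinary vanishing $H^{n,q}(X\setminus D)=0$, and Proposition \ref{pseudoconvexity} then identifies the latter, as $q$ ranges over this interval, with $H^{n,r}_c(D)=0$ for $2\le r\le n-1$ (after an index shift); by the characterization of pseudoconvex domains of a Stein manifold via the Dolbeault cohomology with compact support, this is precisely the pseudoconvexity of $D$. For the top degree, note first that, $X$ being Stein, $H^{n,n}_c(X)$ is Hausdorff and $H^{n,n-1}_c(X)=0$, and once $D$ is known to be pseudoconvex it is Stein, so $H^{n,n}_c(D)$ is Hausdorff (Serre dual to $\oc(D)$); moreover the last assertions of Lemma \ref{complementsupport} and Proposition \ref{pseudoconvexity} turn $H^{n,n-1}_\Phi(X\setminus D)=0$ into the Hausdorffness of $H^{n,n-1}(X\setminus D)$ and of $H^{n,n}_c(D)$, which is what lets the argument run when one starts from the vanishing hypothesis rather than from pseudoconvexity. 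Under these conditions Theorem \ref{complement} with $q=0$ — or, more economically, the argument in its proof specialized to bidegree $(n,n)$ together with Corollary \ref{0runge} — shows that $H^{n,n-1}_\Phi(X\setminus D)=0$ is equivalent to the injectivity of the natural map $H^{n,n}_c(D)\to H^{n,n}_c(X)$, hence to $D$ being Runge in $X$. Combining the two ranges yields the asserted equivalence.

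The step I expect to be most delicate is the order in which the two implications are organized, because of a circularity around Hausdorffness: to invoke Theorem \ref{complement} or Corollary \ref{0runge} in top degree one needs $H^{n,n}_c(D)$ to be Hausdorff, whereas in the direction ``$H^{n,q}_\Phi(X\setminus D)=0$ for $1\le q\le n-1$ implies $D$ pseudoconvex and Runge'' this Hausdorffness has to be produced along the way. The remedy is to extract pseudoconvexity of $D$ — hence its Steinness, and with it all the Hausdorffness needed — from the interior-range vanishing \emph{before} addressing the top-degree (Runge) equivalence; the ``moreover'' parts of Lemma \ref{complementsupport} and Proposition \ref{pseudoconvexity} are tailored for exactly this. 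Apart from this, the only real care needed is the bookkeeping of the index ranges at the endpoints $q=1$ and $q=n-1$.
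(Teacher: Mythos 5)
Your proposal is correct and follows essentially the same route as the paper, whose proof of this corollary is precisely the one-line assembly of Theorem \ref{complement} (at $q=0$), Proposition \ref{pseudoconvexity}, Lemma \ref{complementsupport} and the cohomological characterization of pseudoconvexity that you spell out, and your observation that the ``moreover'' clauses are what supply the Hausdorffness of $H^{n,n}_c(D)$ when one starts from the vanishing hypothesis is exactly the role those clauses play. The only caveat is one you already flag as endpoint bookkeeping: extracting $H^{n,n-1}_c(D)=0$ from $H^{n,n-2}(X\setminus D)=0$ invokes Proposition \ref{pseudoconvexity} at $q=n-1$, formally outside its stated range $1\leq q\leq n-2$, although its proof applies there verbatim (this imprecision is in the paper as well, not introduced by you).
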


Let us end with geometric conditions to ensure the Runge density properties.

We say that the manifold $X$ is an $r$-convex extension of a domain $\Omega\subset X$ if the boundary of $\Omega$ is compact and there exists a $\cc^2$ real valued function $\rho$ defined on a neighborhood $U$ of $X\setminus\Omega$, whose Levi form admits at least $(n-r+1)$ positive eigenvalues, such that $\Omega\cap U=\{z\in U~|~\rho(z)<0\}$ and for any real number $0<c<\sup_{z\in U} \rho(z)$, the set $\{z\in U~|~0\leq\rho(z)\leq c\}$ is compact.

Using Theorem 16.1 in \cite{HeLe2}, the next corollary follows from Theorem \ref{complement} since in a $(q+1)$-convex manifold $M$, for any $0\leq p\leq n$, $H^{n-p,n-q}_c(M)$ is Hausdorff .

\begin{cor}\label{extension}
Let $X$ be a non-compact complex manifold of complex dimension $n\geq 2$, $D\subset\subset X$ a relatively compact domain in $X$ and $q$ a fixed integer such that $0\leq q\leq n-2$. Assume  that, for any neighborhood $V$ of $X\setminus D$, there exists a domain $\Omega$ such that $X\setminus V\subset\Omega\subset D$ and $X$ is a $(q+1)$-convex extension of $\Omega$. Then $D$ is a $q$-Runge domain in $X$.
\end{cor}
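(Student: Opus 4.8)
The plan is to deduce the statement directly from Theorem~\ref{complement}, so the first task is to verify that all the hypotheses of that theorem are met under the geometric assumption. Fix $p$ with $0\leq p\leq n$ and let $q$ be the given integer with $0\leq q\leq n-2$. We must check three things: that $H^{n-p,n-q}_c(X)$ is Hausdorff, that $H^{n-p,n-q}_c(D)$ is Hausdorff, and that $H^{n-p,n-q-1}_c(X)=0$, together with the conclusion $H^{n-p,n-q-1}_\Phi(X\setminus D)=0$. The last of these is precisely the statement we want to feed into the ``if'' direction of Theorem~\ref{complement}, but note that the hypothesis $H^{n-p,n-q-1}_c(X)=0$ is (by the remark following Theorem~\ref{complement}) needed only for the necessity direction; for the sufficiency direction we only need $H^{n-p,n-q-1}_\Phi(X\setminus D)=0$ together with the two Hausdorff conditions. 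Actually, to keep the argument symmetric and to get the cleanest statement I would instead run it as: show $H^{n-p,n-q-1}_\Phi(X\setminus D)=0$, then apply the converse part of Theorem~\ref{complement}.

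The Hausdorff properties will come from Andreotti--Grauert theory. By hypothesis, for a suitable shrinking neighborhood one has a domain $\Omega$ with $X\setminus V\subset\Omega\subset D$ such that $X$ is a $(q+1)$-convex extension of $\Omega$; in particular $X$ itself carries the exhaustion-type function $\rho$ near $X\setminus\Omega$ whose Levi form has at least $n-q$ positive eigenvalues, so $X$ is a $(q+1)$-convex manifold in the sense of Andreotti--Grauert. By Theorem~16.1 in \cite{HeLe2} (as quoted in the paragraph preceding the corollary), for such a manifold $H^{n-p,n-q}_c(X)$ is Hausdorff for every $0\leq p\leq n$. The same reasoning applies to $\Omega$ — it is the ``inside'' of a $(q+1)$-convex extension and is itself $(q+1)$-convex — but we need Hausdorffness of $H^{n-p,n-q}_c(D)$, not of $H^{n-p,n-q}_c(\Omega)$. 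Since $\Omega$ can be taken arbitrarily close to $D$ (by running the hypothesis over a neighborhood basis $V$ of $X\setminus D$), and since $D$ is sandwiched, the Hausdorff property should transfer; alternatively, one observes that $D$ being the complement in $X$ of a set close to $X\setminus\Omega$ inherits the $(q+1)$-convexity of the extension near its boundary, so $D$ is itself $(q+1)$-convex and Theorem~16.1 in \cite{HeLe2} applies to $D$ directly.

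With the Hausdorff hypotheses in hand, the core of the proof is to establish $H^{n-p,n-q-1}_\Phi(X\setminus D)=0$. Given a neighborhood $V$ of $X\setminus D$ and a $\opa$-closed form $f\in\ci_{n-p,n-q-1}(V)$ with $\operatorname{supp}f\in\Phi$, shrink $V$ so that the geometric hypothesis furnishes $\Omega$ with $X\setminus V\subset\Omega\subset D$ and $X$ a $(q+1)$-convex extension of $\Omega$. The form $f$ is defined near $X\setminus D\supset X\setminus\Omega$, i.e.\ on a neighborhood of the ``collar'' $\{0\leq\rho\leq c\}$ and beyond. The key analytic input is the extension/approximation machinery for the $\opa$-equation on $(q+1)$-convex extensions (Theorem~16.1 and its relative-cohomology consequences in \cite{HeLe2}): because passing from $\Omega$ to $X$ through a $(q+1)$-convex extension does not change the Dolbeault cohomology in the relevant bidegrees, a $\opa$-closed $(n-p,n-q-1)$-form defined near the boundary collar and with support meeting $\ol D$ in a compact set can be corrected by a compactly supported solution of $\opa u = \opa\wt f$, exactly as in the proof of Theorem~\ref{complement}; the cut-off $\chi f =: \wt f$ has $\opa\wt f$ of compact support in $D$ with vanishing class in $H^{n-p,n-q}_c(X)$ (here Hausdorffness is used), hence by the injectivity part (Theorem~\ref{runge}, whose hypotheses are the Hausdorff conditions already verified and which applies once we know $D$ is $q$-Runge — or, to avoid circularity, we invoke directly the relative vanishing from the $(q+1)$-convex extension) the class vanishes in $H^{n-p,n-q}_c(D)$, producing $u$ with compact support in $D$ and $\opa u=\opa\wt f$; then $h:=\wt f-u$ equals $f$ near $X\setminus D$, is globally $\opa$-closed, and since $X$ is $(q+1)$-convex with $q\leq n-2$ the group $H^{n-p,n-q-1}(X)$ vanishes (Andreotti--Grauert: $H^{p,k}(X)=0$ for $k\geq q+1$ on a $(q+1)$-convex manifold, and $n-q-1\geq q+1$ is not needed — rather one uses $H^{p,k}(X)=0$ for $k>q$ when $X$ is $(q+1)$-convex, so in particular for $k=n-q-1\geq 1$), giving $g$ with $\opa g=h$, whence $\operatorname{supp}g|_U\in\Phi$ and $\opa g|_U=f$. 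This is the step I expect to be the main obstacle: one must be careful that the Dolbeault group of $X$ in bidegree $(n-p,n-q-1)$ actually vanishes — this requires $n-q-1$ to lie in the range of cohomological triviality for a $(q+1)$-convex manifold, which for $q\leq n-2$ gives $n-q-1\geq 1$, and one invokes the top-degree-type vanishing $H^{p,k}_c$ dualities together with Theorem~16.1 in \cite{HeLe2} — and that the cut-off arguments genuinely keep supports in the family $\Phi$ throughout, the shrinking of neighborhoods being compatible with the geometric hypothesis being quantified over all $V$.

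Having shown $H^{n-p,n-q-1}_\Phi(X\setminus D)=0$ for every $p$, we conclude by Theorem~\ref{complement} (converse direction, which needs only the two Hausdorff conditions and this vanishing) that $D$ is a $q$-Runge domain in $X$.
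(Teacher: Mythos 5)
Your plan---verify the hypotheses of Theorem~\ref{complement} and invoke its ``if'' direction---is the route the paper's one-line proof nominally indicates, but both places where you hedge are where the argument actually breaks. First, the Hausdorffness of $H^{n-p,n-q}_c(D)$: your claim that $D$ ``inherits the $(q+1)$-convexity of the extension near its boundary, so $D$ is itself $(q+1)$-convex'' does not follow. The hypothesis provides domains $\Omega\subset D$ such that $X$ (not $D$) is a $(q+1)$-convex extension of $\Omega$; the function $\rho$ need not be an exhaustion of $D$ with the required Levi signature, since $D$ is merely sandwiched between $\Omega$ and $X$ and its boundary is otherwise unconstrained. What is true is that $D=\bigcup_V\Omega_V$ is an increasing union with $H^{p,q+1}(\Omega_V)\cong H^{p,q+1}(X)$ for each $V$ (Andreotti--Grauert isomorphism for $(q+1)$-convex extensions) together with the density of $Z^{p,q}_\infty(X)$ in $Z^{p,q}_\infty(\Omega_V)$; passing to the limit gives $H^{p,q+1}(D)\cong H^{p,q+1}(X)$, which is finite dimensional hence Hausdorff, and Serre duality then yields the Hausdorffness of $H^{n-p,n-q}_c(D)$. ``Should transfer'' papers over a real step.

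Second, and more seriously, your derivation of $H^{n-p,n-q-1}_\Phi(X\setminus D)=0$ copies the \emph{necessity} direction of Theorem~\ref{complement}, whose last step (turning the globally $\opa$-closed form $h=\wt f-u$ into $\opa g$ with $g$ compactly supported) uses $H^{n-p,n-q-1}_c(X)=0$. That hypothesis is not assumed in Corollary~\ref{extension} and does not follow from $(q+1)$-convexity of $X$: Andreotti--Grauert gives \emph{finite dimensionality}, not vanishing, of $H^{p,q+1}(X)$ (vanishing requires \emph{completely} $(q+1)$-convex), so its dual $H^{n-p,n-q-1}_c(X)$ need not vanish; your parenthetical ``$H^{p,k}(X)=0$ for $k>q$ when $X$ is $(q+1)$-convex'' is false in general and in any case concerns the wrong (non-compactly-supported) group. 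Combined with the circularity you yourself flag---using the $q$-Runge property of $D$ to kill $[\opa\wt f]$ in $H^{n-p,n-q}_c(D)$---this step does not close. The repair is to avoid the $\Phi$-cohomology of the complement entirely: the approximation half of Theorem~16.1 of \cite{HeLe2} for the $(q+1)$-convex extension $[\Omega,X]$ says precisely that $Z^{p,q}_\infty(X)$ is dense in $Z^{p,q}_\infty(\Omega)$ for the smooth topology on compact subsets of $\Omega$. Given $f\in Z^{p,q}_\infty(D)$ and a compact $K\subset D$, take $V=X\setminus K$ and the corresponding $\Omega\supset K$; then $f|_\Omega$ is approximated on $K$ by forms in $Z^{p,q}_\infty(X)$, which is the definition of $q$-Runge. (Equivalently, every class in $H^{n-p,n-q}_c(D)$ is represented by a form supported in some $\Omega$, and the injectivity of $H^{n-p,n-q}_c(\Omega)\to H^{n-p,n-q}_c(X)$, i.e.\ Theorem~\ref{runge} applied to the $q$-Runge pair $(\Omega,X)$, forces injectivity of $H^{n-p,n-q}_c(D)\to H^{n-p,n-q}_c(X)$; one then concludes by Theorem~\ref{runge} for $(D,X)$.) This is the argument the paper's citation is actually leaning on.
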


Let us consider the special case when $p=q=0$. If $X$ is a Stein manifold of complex dimension $n\geq 2$, it follows from the Hartogs extension phenomenon for holomorphic functions that we have only to consider domains $D$ such that $X\setminus D$ has no relatively compact connected component, i.e. $X\setminus D$ is connected. If $X$ is $1$-convex-$(n-1)$-concave, we can consider independently the connected component of $X\setminus D$ which contain the $1$-convex end of $X$ and the connected components of $X\setminus D$ which contain the $(n-1)$-concave ends of $X$.

\begin{cor}
Let $X$ be a complex manifold of complex dimension $n\geq 2$ and $D\subset\subset X$ a relatively compact domain in $X$ such that $X\setminus D$ has no relatively compact connected component. For any connected component $D^c$ of $X\setminus D$, assume that there exists a neighborhood $V$ of $D^c$, which does not meet any other connected component of $X\setminus D$, and a domain $\Omega$ such that $D^c\subset X\setminus\Omega\subset V$ and either $V$ is a $1$-convex extension of $\Omega$ or an $(n-1)$-concave extension of $\Omega$, then $D$ is a Runge domain in $X$.
\end{cor}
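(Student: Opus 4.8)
The plan is to deduce this corollary from Corollary \ref{caracrunge} (for the Stein case) and from Corollary \ref{extension} by decomposing $X\setminus D$ into its connected components and treating each one via the geometric hypothesis. First I would observe that, since $X\setminus D$ has no relatively compact connected component and $D$ is relatively compact, there are only finitely many connected components $D^1,\dots,D^m$ of $X\setminus D$, each non-compact, and each carrying either a $1$-convex end or an $(n-1)$-concave end of $X$ in view of the hypothesis that a neighborhood $V$ of $D^c$ is a $1$-convex or $(n-1)$-concave extension of some $\Omega$. The point of the disjointness assumption on the $V$'s is that one can localize the argument of Corollary \ref{extension}/Theorem \ref{complement} to each component separately: the family of supports $\Phi$ in a neighborhood of $X\setminus D$ splits as a product of the corresponding families on the individual $D^c$.

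The key steps, in order, are: (1) reduce to proving that $\oc(X)$ is dense in $\oc(D)$ for uniform convergence on compacta, i.e.\ that $D$ is Runge; by Corollary \ref{rungeStein} and the remarks following Corollary \ref{0runge}, it suffices to check that the natural map $H^{n,n}_c(D)\to H^{n,n}_c(X)$ is injective, noting that under the stated hypotheses $H^{0,1}(X)$ and $H^{0,1}(D)$ are Hausdorff (the ambient manifold is, in each relevant situation, either pseudoconvex or $1$-convex-$(n-1)$-concave, and $D$ inherits Hausdorffness from the Andreotti–Grauert/Andreotti–Vesentini results recalled above). (2) Given a smooth $(n,n)$-form $f$ with compact support in $D$ whose class dies in $H^{n,n}_c(X)$, write $f=\opa g$ with $g\in\dc^{n,n-1}(X)$; then $\opa g=0$ on a neighborhood $W$ of $X\setminus D$. (3) On each component $D^c$ the hypothesis gives $V$ a $1$-convex or $(n-1)$-concave extension of $\Omega$ with $D^c\subset X\setminus\Omega\subset V$; apply the support–cohomology vanishing in the relevant degree (as in the proof of Theorem \ref{complement}, using Theorem 16.1 of \cite{HeLe2} for the $1$-convex case, and the concave counterpart for the $(n-1)$-concave case) to correct $g$ on a neighborhood of $D^c$ to a form with support in $\Phi$. (4) Since the components are separated by the disjoint $V$'s, patch these corrections together with a partition of unity to produce $u\in\dc^{n,n-1}(D)$ with $\opa u=f$, giving injectivity.

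I expect the main obstacle to be step (3)–(4): making precise that the $1$-convex and $(n-1)$-concave ends can be handled simultaneously and independently. Concretely, for a component carrying a $1$-convex end, one invokes Hausdorffness of $H^{n,n}_c$ of the extension and the $\opa$-solvability with prescribed support exactly as in Theorem \ref{complement} with $q=0$; for a component carrying an $(n-1)$-concave end, one instead uses that $H^{n,1}_\Phi$ vanishes there because the concave end allows solving $\opa$ with the appropriate support condition, which is the statement dual to the finite-dimensionality/Hausdorffness of $H^{0,n-1}$ on an $(n-1)$-concave manifold. The delicate point is purely bookkeeping: the neighborhoods $U\subset V$ produced for distinct components must be shrunk so as to remain pairwise disjoint, so that the glued form $u$ has support a compact subset of $D$ and no interference occurs across components. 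Once this localization is set up, the conclusion follows from Corollary \ref{caracrunge} in the Stein case and from Theorem \ref{runge} in general.
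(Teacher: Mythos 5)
Your treatment of the $1$-convex component is essentially the paper's: there one applies Theorem 16.1 of \cite{HeLe2} to get $H^{n,n-1}_\Phi(D^c)=0$ and concludes via Theorem \ref{complement}. The genuine gap is in your step (3) for the $(n-1)$-concave components. You assert that ``$H^{n,1}_\Phi$ vanishes there'' by invoking a ``concave counterpart'' of Theorem 16.1, dual to Hausdorffness of $H^{0,n-1}$ on an $(n-1)$-concave manifold. First, the degree is wrong: for $p=q=0$ the group needed in Theorem \ref{complement} is $H^{n,n-1}_\Phi$, not $H^{n,1}_\Phi$ (these agree only when $n=2$), and the dual group controlled by $(n-1)$-concavity in the Andreotti--Grauert/Andreotti--Vesentini theory is $H^{p,1}$, not $H^{0,n-1}$. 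Second, and more importantly, the concave analogue of Theorem 16.1 preserves \emph{low}-degree Dolbeault cohomology across a concave extension --- in particular $H^{p,0}$, i.e.\ holomorphic sections --- and does not directly yield the vanishing of the top-degree $\Phi$-supported group $H^{n,n-1}_\Phi$ on the concave component. The paper handles these components by a different mechanism: the Hartogs-type extension phenomenon of \cite{LaLeMathAnn}, namely that any holomorphic function on $\wt V\cap D$ extends holomorphically across an $(n-1)$-concave end to all of $\wt V$, so that on those components the approximation problem disappears. If you want to stay in the dual, $\Phi$-cohomology picture, you must actually derive $H^{n,n-1}_\Phi(D^c)=0$ from this extension property (a duality argument in the spirit of Ehrenpreis' proof of Hartogs), which your proposal does not supply.

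Two smaller points. Your step (1) claims that $H^{0,1}(D)$ is Hausdorff because ``$D$ inherits Hausdorffness''; this is unjustified for an arbitrary relatively compact domain $D$ and, fortunately, unnecessary: the implication from injectivity of $H^{n,n}_c(D)\to H^{n,n}_c(X)$ to the Runge property in the proof of Theorem \ref{runge} uses only the Hausdorffness of $H^{n,n}_c(X)$, which here follows from $X$ being $1$-convex or $1$-convex-$(n-1)$-concave. Also, Corollary \ref{caracrunge} is stated for Stein $X$ and is not the right reference here; the conclusion should be drawn from Theorem \ref{complement} (or Theorem \ref{runge}) directly, as in the paper.
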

\begin{proof}
The hypothesis implies that $X$ is either a $1$-convex or a $1$-convex-$(n-1)$-concave complex manifold and hence $H^{n,n}_c(X)$ is Hausdorff.

Under the assumptions on the connected components of $X\setminus D$ which contain the $(n-1)$-concave ends of $X$, any holomorphic function on $\wt V\cap D$, where $\wt V$ is a neighborhood of such a component, extends holomorphically to $\wt V$ (see the last section of \cite{LaLeMathAnn}). For the connected component $D^c$ which contain  the $1$-convex end of $X$, we can apply Theorem 16.1 in \cite{HeLe2} to get that $H^{n,q}_\Phi(D^c)=0$. The conclusion follows then from Theorem \ref{complement}.
\end{proof}

\subsection{Runge density property for germs of holomorphic functions on a compact set}

\begin{defin}
A compact subset $K$ of a non-compact complex manifold $X$ of complex dimension $n\geq 1$ is  Runge  in $X$ if and only if the space $\oc(X)$ of holomorphic functions on $X$ is dense in the space $\oc(K)$ of holomorphic functions in a neighborhood of $K$ for the topology of $\ci(K)$.
\end{defin}

Let $K$ be a compact subset in a complex manifold $X$ and $q$ a positive integer, we will say that $H^{0,q}(K)=0$ if any $\opa$-closed $(0,q)$-current defined on a neighborhood of $K$ is $\opa$-exact on a possibly smaller neighborhood of $K$.

\begin{thm}\label{rungeK}
Let  $X$ be a Stein manifold of complex dimension $n\geq 1$ and  $K$ a compact subset of $X$ with connected complement. Assume that for any  $V$ belonging to a neighborhood basis of $K$ the natural map $H^{n,n}_c(V)\cap (\ec'_K)^{n,n}(X)\to H^{n,n}_c(X)$ is injective, then $K$ is  Runge  in $X$. If moreover $n\geq 2$, $H^{0,1}(K)=0$ and $K$ is Runge in $X$, then, for any neighborhood $V$ of $K$, the natural map $H^{n,n}_c(V)\cap (\ec'_K)^{n,n}(X)\to H^{n,n}_c(X)$ is injective.
\end{thm}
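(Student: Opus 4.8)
The plan is to prove the two implications separately, using the Serre duality pairing between the Dolbeault cohomology with compact support and the Dolbeault cohomology of germs on $K$, exactly as in the proof of Theorem \ref{runge} but now taking direct and inverse limits over a neighborhood basis $(V_j)_{j\in\nb}$ of $K$. Throughout, $X$ being Stein guarantees that $H^{n,n}_c(X)$ is Hausdorff, so the various duality arguments are legitimate.

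For the first assertion, I would argue by a Hahn--Banach duality. Let $\varphi\in\oc(K)$ be holomorphic on some neighborhood $V$ of $K$; we want to approximate it in $\ci(K)$ by elements of $\oc(X)$. By Hahn--Banach applied in the Fr\'echet space $\ci(K)$ (equivalently, in $\oc(V)$ for $V$ in the basis), it suffices to show that every $(n,n)$-current $T$ with support in $K$ that annihilates $\oc(X)$ also annihilates $\varphi$. Since $\supp T\subset K\subset V$ and $T$ kills $\oc(X)\supset\oc(V)$-restrictions\ldots more precisely: the hypothesis $\langle T,f\rangle=0$ for all $f\in\oc(X)$ together with Hausdorffness of $H^{n,n}_c(X)$ gives $T=\opa S$ for some $(n,n-1)$-current $S$ with compact support in $X$. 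Now $T\in(\ec'_K)^{n,n}(X)$ and $T$ defines a class in $H^{n,n}_c(V)\cap(\ec'_K)^{n,n}(X)$ whose image in $H^{n,n}_c(X)$ vanishes; the injectivity hypothesis forces $T=\opa U$ for some $(n,n-1)$-current $U$ with support in $V$ (in fact one can shrink to support in any member of the basis). Then for the holomorphic form $\varphi$ on $V$,
$$\langle T,\varphi\rangle=\langle\opa U,\varphi\rangle=\pm\langle U,\opa\varphi\rangle=0,$$
which is what we needed. A small technical point I would be careful about: the currents $U$ furnished by the injectivity hypothesis have support in $V$ but the pairing $\langle U,\opa\varphi\rangle$ only makes sense if $\opa\varphi$ is defined on $\supp U$; since $\varphi$ is holomorphic on all of $V$ this is fine, and one uses $\supp T\Subset\ol D$-type remarks only to keep everything compactly supported.

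For the converse, assume $n\geq2$, $H^{0,1}(K)=0$, and $K$ Runge in $X$; fix a neighborhood $V$ of $K$ and a $\opa$-closed $f\in\ci_{n,n}(V)$ with $\supp f\Subset V$ whose class in $H^{n,n}_c(X)$ is zero, i.e. $f=\opa g$ for some $g\in\dc^{n,n-1}(X)$. I want $f=\opa u$ with $u$ supported in $V$. By Serre duality, $[f]=0$ in $H^{n,n}_c(V)$ (restricted to the $\ec'_K$-part) is equivalent to $\int_V \varphi f=0$ for every holomorphic function $\varphi$ on a neighborhood of $K$; but since $K$ is Runge, such $\varphi$ is a limit in $\ci(K)$ of functions $\varphi_k\in\oc(X)$, and on the fixed compact $\supp f$ we get
$$\int_V\varphi f=\lim_{k\to\infty}\int_X\varphi_k f=\lim_{k\to\infty}\int_X\varphi_k\,\opa g=\pm\lim_{k\to\infty}\int_X\opa\varphi_k\wedge g=0.$$
This shows the pairing vanishes, hence $[f]=0$ in the appropriate localized $H^{n,n}_c(V)$, giving the desired injectivity. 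The role of the hypothesis $H^{0,1}(K)=0$ is to ensure that the relevant germ cohomology $H^{0,1}$ of $K$ vanishes so that the Serre duality identification between $H^{n,n}_c(V)\cap(\ec'_K)^{n,n}(X)$ and the dual of $\oc(K)$ is an isomorphism (this is the germ-level analogue of the Hausdorffness hypotheses in Theorem \ref{runge}); I would spell out this duality by passing to the inductive limit over the neighborhood basis and invoking the standard Serre duality for the $\opa$-complex on each $V_j$.

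The main obstacle I expect is the bookkeeping of supports and the passage to limits over the neighborhood basis: unlike the clean domain case of Theorem \ref{runge}, here both the cohomology $H^{n,n}_c(V)\cap(\ec'_K)^{n,n}(X)$ and the germ space $\oc(K)$ are defined as (co)limits, and one must check that the Serre duality pairing is compatible with these limits and that the "injectivity of the natural map" can be tested on a single representative $V$ (or a cofinal sequence). Establishing that $H^{0,1}(K)=0$ yields the needed Hausdorff/duality statement at the germ level---rather than merely on each fixed $V_j$---is the real content; once that duality is in hand, both implications follow the template of Theorem \ref{runge} essentially verbatim.
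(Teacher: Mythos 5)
Your first implication is essentially the paper's proof: Hahn--Banach, Hausdorffness of $H^{n,n}_c(X)$ to write $T=\opa S$, then the injectivity hypothesis to replace $S$ by a current $R$ compactly supported in the neighborhood $V$ on which $\varphi$ is holomorphic, so that $\langle T,\varphi\rangle=\pm\langle R,\opa\varphi\rangle=0$. That part is fine.

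The converse has a genuine gap. You reduce everything to the assertion that vanishing of the pairing $\int\varphi f$ against all $\varphi\in\oc(K)$ forces $[f]=0$ in $H^{n,n}_c(V)\cap(\ec'_K)^{n,n}(X)$, and you propose to justify this by ``standard Serre duality on each $V_j$'' plus a passage to limits. But that duality statement is exactly the Hausdorff-type property to be proven, and it is not available: for an arbitrary neighborhood $V_j$ of $K$ nothing guarantees that $H^{0,1}(V_j)$ (equivalently $H^{n,n}_c(V_j)$) is Hausdorff, and even if it were, Serre duality on $V_j$ pairs $H^{n,n}_c(V_j)$ with $\oc(V_j)$, not with the germ space $\oc(K)$; the hybrid group $H^{n,n}_c(V)\cap(\ec'_K)^{n,n}(X)$ has no off-the-shelf duality theory. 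The paper's converse works quite differently and is where the hypothesis $H^{0,1}(K)=0$ actually enters: starting from $T=\opa S$ with $S$ compactly supported in $X$, one observes that $S$ is $\opa$-closed on $X\setminus K$, invokes the fact (Proposition 1.1 of \cite{Lacalotte}) that $H^{0,1}(K)=0$ implies $H^{n,n-1}(X\setminus K)$ is Hausdorff, and then checks orthogonality of $S$ to every $\opa$-closed $(0,1)$-form $\theta$ with compact support in $X\setminus K$ by writing $\theta=\opa\omega$ with $\omega$ compactly supported in $X$ and holomorphic near $K$ (Steinness of $X$), so that $\langle S,\theta\rangle=\pm\langle T,\omega\rangle=0$ by the Runge property. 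This yields $S=\opa R$ on $X\setminus K$, and a cutoff $\chi$ vanishing near $K$ and equal to $1$ off $V$ produces $S_V=S-\opa(\chi R)$ with compact support in $V$ and $\opa S_V=T$. The step of solving for $S$ on the complement $X\setminus K$ and cutting off is the missing idea; without it your argument does not close.
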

\begin{proof}
We will use the Hahn-Banach theorem. Let $T$ be an $(n,n)$-current with support in $K$ such that $<T,f>=0$ for any holomorphic function $f\in\oc(X)$. Since $H^{n,n}_c(X)$ is Hausdorff, there exists an $(n,n-1)$-current $S$ with compact support in $X$ such that $T=\opa S$. Let $\varphi\in\oc(K)$, then there exists a neighborhood $V$ of $K$ such that $\varphi\in\oc(V)$. Using the injectivity hypothesis there exists an $(n,n-1)$-current $R$ with compact support in $V$ such that $T=\opa R$, therefore
$$<T,\varphi>=<\opa R,\varphi>=\pm <R,\opa\varphi>=0.$$
By the Hahn-Banach theorem, we get the density property.

Now assume $H^{0,1}(K)=0$. Let $T\in(\ec'_K)^{n,n}(X)$ such that $T=\opa S$ for an $(n,n-1)$-current $S$ with compact support in $X$. We first prove that for any $\varphi\in\oc(K)$, we have $<T, \varphi>=0$. Since $\oc(X)$  is dense in the space $\oc(K)$  for the topology of $\ci(K)$, there exists a sequence $(\varphi_k)_{k\in\nb}$ of holomorphic functions in $X$ which converges for the smooth topology on $K$ to $\varphi$. So
$$<T, \varphi>=\lim_{k\to\infty}<T, \varphi_k>=\lim_{k\to\infty}<\opa S, \varphi_k>=\pm\lim_{k\to\infty}<S,\opa \varphi_k>=0.$$

Since the support of $T$ is contained in $K$, the current $S$ is $\opa$-closed in $X\setminus K$. Recall that if $H^{0,1}(K)=0$, then $H^{n,n-1}(X\setminus K)$ is Hausdorff (see Proposition 1.1 in \cite{Lacalotte}). Therefore to prove that $S$ is $\opa$-exact in $X\setminus K$, it is sufficient to prove that for any smooth, $\opa$-closed $(0,1)$-form $\theta$ with compact support in $X\setminus K$, we have $<S,\theta>=0$. Using that $X$ is Stein $\theta=\opa\omega$ for some smooth function $\omega$ with compact support in $X$, moreover $\omega$ is holomorphic in some neighborhood of $K$. So
$$<S,\theta>=<S,\opa\omega>=\pm <\opa S,\omega>=<T,\omega>=0$$
and $S=\opa R$ for an $(n,n-2)$-current $R$ in $X\setminus K$.

Let $V$ be some neighborhood of $K$ and $\chi$ be a smooth positive function on $X$ such that $\chi\equiv 1$ on $X\setminus V$ and vanishing on a neighborhood of $K$. Set $S_V=S-\opa\chi R$, then $S_V$ has compact support in $V$ and $T=\opa S_V$.
\end{proof}

Let $K$ be a compact subset of a complex manifold of complex dimension $n$. Let us denote by $\Phi$ the family of all closed subset $F$ of $X\setminus K$ such that $F\cup K$ is a compact subset of $X$. We will say that $\wt H^{p,q}_\Phi(X\setminus K)=0$ for some $0\leq p\leq n$ and $1\leq q\leq n-1$ if and only if for any extendable $\opa$-closed $(p,q)$-current $T$ on $X\setminus K$, whose support belongs to $\Phi$, there exists a $(p,q-1)$-current $S$ on $X\setminus K$, whose support belongs to $\Phi$, such that $T=\opa S$ on $X\setminus K$.

\begin{thm}
Let $X$ be a Stein manifold of complex dimension $n\geq 2$ and $K$ a compact subset of $X$. Assume $\wt H^{n,n-1}_\Phi(X\setminus K)=0$, then for any   neighborhood $V$ of $K$ the natural map $H^{n,n}_c(V)\cap (\ec'_K)^{n,n}(X)\to H^{n,n}_c(X)$ is injective.
\end{thm}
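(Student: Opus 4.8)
The plan is to mimic the second half of the proof of Theorem \ref{rungeK}, but replacing the vanishing hypothesis $H^{0,1}(K)=0$ (which produced $\opa$-exactness of the auxiliary current $S$ \emph{near} $K$) by the new hypothesis $\wt H^{n,n-1}_\Phi(X\setminus K)=0$, which directly controls the $\opa$-equation on $X\setminus K$ with support in the family $\Phi$. So let $V$ be a neighborhood of $K$ and let $T\in H^{n,n}_c(V)\cap(\ec'_K)^{n,n}(X)$ be a class mapping to $0$ in $H^{n,n}_c(X)$. Concretely, $T$ is an $(n,n)$-current with support in $K$ and there is an $(n,n-1)$-current $S$ with compact support in $X$ such that $T=\opa S$; I must produce an $(n,n-1)$-current with compact support \emph{in $V$} whose $\opa$ is $T$.

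First I would observe that, since $\supp T\subset K$, the current $S$ is $\opa$-closed on $X\setminus K$; moreover $S$ has compact support in $X$, so in particular $\supp S\cap(X\setminus K)$ is a closed subset of $X\setminus K$ whose union with $K$ is compact, i.e.\ $\supp(S_{|X\setminus K})\in\Phi$, and $S_{|X\setminus K}$ is extendable (it is the restriction of a current defined on all of $X$). Thus $S_{|X\setminus K}$ is exactly the kind of object to which the hypothesis $\wt H^{n,n-1}_\Phi(X\setminus K)=0$ applies: there exists an $(n,n-2)$-current $R$ on $X\setminus K$ with $\supp R\in\Phi$ such that $\opa R=S$ on $X\setminus K$. (Here I should double-check the bidegree bookkeeping: for $n\ge 2$ we have $n-2\ge 0$, so this is a legitimate current; for $n=2$ it is an $(n,0)$-current, i.e.\ a top-in-the-first-degree holomorphic-type object, but the statement of $\wt H^{n,n-1}_\Phi$ covers $q=n-1\ge 1$, so this is fine.)

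Next comes the cut-off step, identical in spirit to the end of the proof of Theorem \ref{rungeK}. Choose a smooth positive function $\chi$ on $X$ with $\chi\equiv 1$ on $X\setminus V$ and $\chi\equiv 0$ on a neighborhood of $K$. Then $\chi R$ is a well-defined $(n,n-2)$-current on all of $X$ (it vanishes near $K$, so extends by zero across $K$), and I set $S_V:=S-\opa(\chi R)$. On $X\setminus V$ we have $\chi=1$ so $\opa(\chi R)=\opa R=S$ there, hence $S_V=0$ on $X\setminus V$; therefore $\supp S_V\subset\ol V$, and since $S$ has compact support, $S_V$ has compact support contained in $V$. Finally $\opa S_V=\opa S-\opa\opa(\chi R)=\opa S=T$. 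This exhibits $T$ as $\opa$ of a compactly supported $(n,n-1)$-current in $V$, i.e.\ $[T]=0$ in $H^{n,n}_c(V)\cap(\ec'_K)^{n,n}(X)$, proving injectivity of the natural map.

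I expect the main (and essentially only) obstacle to be verifying that the support conditions are met at each stage so that the hypothesis $\wt H^{n,n-1}_\Phi(X\setminus K)=0$ genuinely applies: one must be careful that $S_{|X\setminus K}$ is \emph{extendable} as a current on $X\setminus K$ and that its support lies in $\Phi$ (both follow from $S$ being a globally defined compactly supported current on $X$, but this should be spelled out), and, symmetrically, that the resulting $R$ having support in $\Phi$ is precisely what makes $\chi R$ extend by zero and $S_V$ compactly supported in $V$. No analytic input beyond the hypothesis is needed — Steinness of $X$ is used only through the ambient setup (Hausdorffness giving the current $S$ in the first place, which here is part of the assumption $[T]\mapsto 0$), so the argument is purely a diagram-chase with supports.
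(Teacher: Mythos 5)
Your proof is correct and is essentially identical to the paper's: restrict $S$ to $X\setminus K$, where it is an extendable $\opa$-closed current with support in $\Phi$, apply the hypothesis $\wt H^{n,n-1}_\Phi(X\setminus K)=0$ to write $S=\opa R$ there, and cut off to get $S_V=S-\opa(\chi R)$ with compact support in $V$ and $\opa S_V=T$. The only cosmetic difference is that the paper takes $\chi\equiv 1$ on a \emph{neighborhood} of $X\setminus V$ (not merely on the closed set $X\setminus V$), which is what cleanly forces $\supp S_V\subset V$ rather than just $\subset\ol V$.
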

\begin{proof}
First let $T$ be an $(n,n)$-current on $X$ with support contained in $K$ such that $T=\opa S$ for some $(n,n-1)$-current $S$  with compact support in $X$. Then $S_{|_{X\setminus K}}$ is an extendable $\opa$-closed $(n,n-1)$-current on $X\setminus K$, whose support belongs to $\Phi$. Since $\wt H^{n,n-1}_\Phi(X\setminus K)=0$, there exists an $(n,n-2)$-current $U$  on $X\setminus K$, whose support belongs to $\Phi$, such that $S=\opa U$. Let $V$ be a neighborhood of $K$ and $\chi$ a positive smooth function with support in $X\setminus K$ and equal to $1$ on a neighborhood of $X\setminus V$. Set $\wt U=\chi U$, then $S-\opa\wt U$ has compact support in $V$ and $\opa(S-\opa\wt U)=T$.
\end{proof}

\begin{lem}\label{lemmecompl}
Let $X$ be a non-compact complex manifold  $X$ of complex dimension $n\geq 2$, $K$ a compact subset of $X$, $U$ a relatively compact neighborhood of $K$. We assume that

i) $H^{n,n-1}_c(X)=0$,

ii) the natural map $H^{n,n}_c(U)\cap (\ec'_K)^{n,n}(X)\to H^{n,n}_c(X)$ is injective.

\parindent=0pt{For any extendable, $\opa$-closed $(n,n-1)$-current $T$ on $X\setminus K$ vanishing outside a compact subset of $X$, there is an $(n,n-2)$-current $S$ on $X\setminus\ol U$ vanishing outside a compact subset of $X$ such that $\opa S=T$ on $X\setminus\ol U$.}
\end{lem}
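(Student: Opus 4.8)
The plan is to lift the equation to all of $X$: first produce a compactly supported extension $\wt T$ of $T$, then remove, one after the other, the two obstructions to solving $\opa S=T$ globally with compact support by invoking hypotheses (ii) and (i), exactly in the spirit of the proof of Theorem \ref{rungeK}.

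\emph{Step 1: a compactly supported extension.} Since $T$ is extendable it is the restriction to $X\setminus K$ of a current $\widehat T$ on $X$. Pick a smooth function $\chi$ with compact support in $X$ and $\chi\equiv 1$ on a neighborhood $N$ of $K$, and set $\wt T:=\chi\widehat T+(1-\chi)T$, where $(1-\chi)T$ — which vanishes near $K$ — is extended by zero across $K$. Because $T$ vanishes outside a compact subset of $X$, the current $(1-\chi)T$ has compact support; hence $\wt T$ is a compactly supported $(n,n-1)$-current on $X$ with $\wt T=T$ on $X\setminus K$. Since $\opa T=0$ on $X\setminus K$, the $(n,n)$-current $\opa\wt T$ is supported in $K$, so $\opa\wt T\in(\ec'_K)^{n,n}(X)$; moreover $\opa\wt T$ is $\opa$-exact in $X$ via the compactly supported current $\wt T$, so its class in $H^{n,n}_c(X)$ vanishes.

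\emph{Steps 2 and 3: the two corrections.} As $\supp\opa\wt T\subset K\subset U$, the current $\opa\wt T$ defines a class in $H^{n,n}_c(U)\cap(\ec'_K)^{n,n}(X)$ whose image in $H^{n,n}_c(X)$ is $0$, so by the injectivity in hypothesis (ii) there is an $(n,n-1)$-current $R$ with compact support in $U$ such that $\opa R=\opa\wt T$. Then $W:=\wt T-R$ is a $\opa$-closed $(n,n-1)$-current with compact support in $X$, and hypothesis (i), $H^{n,n-1}_c(X)=0$, provides an $(n,n-2)$-current $S_0$ with compact support in $X$ such that $\opa S_0=\wt T-R$. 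On $X\setminus\ol U$ the current $R$ vanishes (its support is a compact subset of $U$) and $\wt T=T$ (since $K\subset U$, so $X\setminus\ol U\subset X\setminus K$); hence $\opa S_0=T$ on $X\setminus\ol U$. Taking $S:=S_0|_{X\setminus\ol U}$, which vanishes outside the compact set $\supp S_0$, finishes the proof.

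I expect the only genuine work to be in Step 1: checking carefully that ``extendable'' together with ``vanishing outside a compact subset of $X$'' really yields a compactly supported extension with $\opa\wt T$ supported in $K$ — this is what makes hypothesis (ii) applicable — and then keeping track of supports so that restricting to $X\setminus\ol U$ erases the correction $R$. Steps 2 and 3 are purely formal invocations of the two cohomological hypotheses.
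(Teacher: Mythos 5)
Your proof is correct and follows essentially the same route as the paper: extend $T$ to a compactly supported current $\wt T$ with $\supp\opa\wt T\subset K$, correct by a current $R$ compactly supported in $U$ via hypothesis (ii), solve $\wt T-R=\opa S_0$ via hypothesis (i), and restrict to $X\setminus\ol U$. Your Step 1 is in fact slightly more careful than the paper's, which simply takes ``an extension of $T$ to $X$'' and asserts it has compact support, whereas your cut-off construction $\chi\widehat T+(1-\chi)T$ makes that explicit.
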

\begin{proof}
 Let $T$ is an extendable, $\opa$-closed $(n,n-1)$-current on $X\setminus K$ vanishing outside a compact subset of $X$ and $\wt T$ an extension of $T$ to $X$, then $\wt T$ defines an $(n,n-1)$-current with compact support in $X$ and the support of $\opa\wt T$ is contained in $K$. So by ii) there is an  $(n,n-1)$-current $R$ with compact support in $U$ such that $\opa R=\opa\wt T$ on $X$. The current $\wt T-R$ is $\opa$-closed and compactly supported in $X$. Hypothesis i) then implies the existence of an $(n,n-2)$-current $S$ with compact support in $X$ such that $\wt T-R=\opa S$. The restriction of $S$ to $X\setminus\ol U$ is then the form we seek because $\wt T-R=T$ on $X\setminus\ol U$.
\end{proof}

\begin{thm}\label{complementK}
Let  $X$ be a Stein manifold of complex dimension $n\geq 2$, $K$ a compact subset of $X$. We assume that $K$ admits a decreasing Stein neighborhood basis $(U_k)_{k\in\nb}$ such that $\cap_{k\in\nb}U_k=K$ and, for any $k\in\nb$, $X\setminus\ol U_k$ is connected and  $H^{n,n}_c(U_k)\cap (\ec'_K)^{n,n}(X)\to H^{n,n}_c(X)$ is injective. Then $\wt H^{n,n-1}_\Phi(X\setminus K)=0$.
\end{thm}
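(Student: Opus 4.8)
The plan is to build the required global primitive on $X\setminus K$ by gluing together, along an exhaustion of $X\setminus K$, the local primitives furnished by Lemma~\ref{lemmecompl}. First I would pass to a subsequence of $(U_k)$ so that $\ol U_{k+1}\subset U_k$ for all $k$: since $(U_k)$ is a neighbourhood basis of $K$ one has $\bigcap_k\ol U_k=K$, a compact subset of each open $U_k$, so such a subsequence exists; it is again a Stein neighbourhood basis, each $X\setminus\ol U_k$ is still connected, and the injectivity hypothesis is retained. Writing $\Omega_k:=X\setminus\ol U_k$, one then has $\Omega_1\subset\Omega_2\subset\cdots$ with $\bigcup_k\Omega_k=X\setminus K$. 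Given an extendable $\opa$-closed $(n,n-1)$-current $T$ on $X\setminus K$ with support in $\Phi$, fix an extension of $T$ to a current with compact support in $X$; applying Lemma~\ref{lemmecompl} with $U=U_k$ — its hypothesis (i) holds since $X$ is Stein, and (ii) is exactly the assumed injectivity — produces, for each $k$, an $(n,n-2)$-current $S_k$ on $\Omega_k$ with $\opa S_k=T$ on $\Omega_k$ and $S_k$ vanishing outside a compact subset of $X$.

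The core of the argument is to glue the $S_k$ into one current $S$ on $X\setminus K$. When $n=2$ this is immediate: $S_{k+1}-S_k$ is a holomorphic $n$-form on the connected non-compact manifold $\Omega_k$ which vanishes outside a compact set, so by the identity theorem $S_{k+1}=S_k$ on $\Omega_k$, and the $S_k$ define $S$ directly. For $n\ge3$ I would run a Mittag-Leffler correction: set $\wt S_1:=S_1$, and given $\wt S_k$ on $\Omega_k$ with $\opa\wt S_k=T$ and vanishing outside a compact set, note that $\alpha_k:=(S_{k+1}-\wt S_k)|_{\Omega_k}$ is a $\opa$-closed $(n,n-2)$-current vanishing outside a compact set; solve $\alpha_k=\opa\beta_k$ on $\Omega_k$, choose $\chi_k\in\ci(X)$ with $\chi_k\equiv1$ on $\Omega_{k-1}$ and $\supp\chi_k\subset\Omega_k$ (possible, as $\ol U_k$ is compact and disjoint from the closed set $\Omega_{k-1}$), and set $\wt S_{k+1}:=S_{k+1}-\opa(\chi_k\beta_k)$. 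Then $\opa\wt S_{k+1}=T$; on $\Omega_{k-1}$ one has $\chi_k\equiv1$, so $\opa(\chi_k\beta_k)=\alpha_k=S_{k+1}-\wt S_k$ there and $\wt S_{k+1}=\wt S_k$ on $\Omega_{k-1}$; and since $\opa(\chi_k\beta_k)=\opa\chi_k\wedge\beta_k+\chi_k\alpha_k$, with $\opa\chi_k$ compactly supported and $\alpha_k$ vanishing outside a compact set, $\wt S_{k+1}$ still vanishes outside a compact subset of $X$. The sequence $(\wt S_k)$ is eventually constant on each $\Omega_j$, hence defines an $(n,n-2)$-current $S$ on $X\setminus K=\bigcup_j\Omega_j$ with $\opa S=T$.

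To conclude I would verify $\supp S\in\Phi$: on each $\Omega_k$ the current $S$ agrees with one vanishing outside a compact $M\subset X$, so $\supp S\cap\Omega_k\subset M$ and hence $\supp S\cup K\subset M\cup\ol U_k$; since $\supp S$ is closed in $X\setminus K$, the set $\supp S\cup K$ is closed in $X$, and being contained in a compact set it is compact. This is precisely $\wt H^{n,n-1}_\Phi(X\setminus K)=0$.

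The hard part is the cohomological input for the gluing when $n\ge3$, namely $H^{n,n-2}(\Omega_k)=0$. Since the $\ol U_k$ are only assumed Stein and not $\oc(X)$-convex, this vanishing is not quite for free from the statement as written; I expect one secures it by inserting between $\ol U_{k+1}$ and $U_k$ a relatively compact strictly pseudoconvex set with $\cc^2$ boundary and invoking Andreotti-Grauert theory together with Proposition~1.1 of \cite{Lacalotte} (much as in the proof of Lemma~\ref{complementsupport}), or by deriving it directly from Steinness of $X$ and $U_k$. The remaining, purely bookkeeping, difficulty is to organise the whole construction so that it stays compatible with the support family $\Phi$ — that is, the cancellation that makes $\opa(\chi_k\beta_k)$ compactly supported even though the primitive $\beta_k$ need not be.
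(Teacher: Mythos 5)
Your argument is essentially the paper's own proof: invoke Lemma~\ref{lemmecompl} on each $X\setminus\ol U_k$, glue by the identity theorem when $n=2$, and run a Mittag--Leffler correction with cut-off functions when $n\geq 3$. The one point you flag is resolved exactly as you guess: the paper assumes without loss of generality that each $U_k$ is strictly pseudoconvex with $\cc^2$ boundary and $U_{k+1}\subset\subset U_k$, so that $H^{0,1}(\ol U_k)=0$ and Proposition~1.1 of \cite{Lacalotte} yield $H^{n,n-2}(X\setminus\ol U_k)=0$, which is the cohomological input needed for the gluing step.
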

\begin{proof}
Let $T$ be an extendable, $\opa$-closed $(n,n-1)$-current on $X\setminus K$, whose support belongs to $\Phi$.
Since $X$ is a Stein manifold, the hypotheses of Lemma \ref{lemmecompl} are fulfilled. Hence for each $k\in\nb$, there exists an $(n,n-2)$-current $S_k$ on $X\setminus\ol U_k$ vanishing outside a compact subset of $X$ such that $\opa S_k=T$ on $X\setminus\ol U_k$.

If $n=2$, the distribution $S_{k+1}-S_k$ is then holomorphic on $X\setminus\ol U_k$ and vanishes outside a compact subset of $X$. By analytic continuation, $X\setminus\ol U_k$ being connected, we get $S_{k+1}-S_k\equiv 0$ on $X\setminus\ol U_k$. The distribution $S$ defined by $S=S_k$ on $X\setminus\ol U_k$ satisfies ${\rm supp}~S\in\Phi$ and $\opa S=T$ on $X\setminus K$.

Now we suppose $n\geq 3$. We proceed by induction. We set $\wt S_0=S_2{_{|{X\setminus\ol U_0}}}$ and we assume that, for $1\leq k\leq l$,  we have already construct $\wt S_k$ vanishing outside a compact subset of $X$ and such that $\opa\wt S_k=T$ on $X\setminus\ol U_{k+2}$ and $\wt S_k{_{|_{X\setminus\ol U_{k-1}}}}=\wt S_{k-1}$. We construct $\wt S_{l+1}$ in the following way. The current $\wt S_l- S_{l+3}$ is $\opa$-closed on $X\setminus\ol U_{l+2}$. Without loss of generality we may assume that each $U_k$ is a strictly pseudoconvex domain with $\cc^2$ boundary and $U_{k+1}\subset\subset U_k$. The strict pseudoconvexity of $U_k$ implies that $H^{0,1}(\ol U_k)=0$ and by Proposition 1.1 in \cite{Lacalotte}, $H^{n,n-2}(X\setminus \ol U_k)=0$. Therefore there exists $R$ such that $\opa R= \wt S_l- S_{l+3}$ on $X\setminus\ol U_{l+1}$. Let $\chi$ be a smooth function on $X$ such that $\chi\equiv 1$ on a neighborhood of $X\setminus\ol U_{l}$ and $\chi\equiv 0$ on a neighborhood of $\ol U_{l+1}$. We set $\wt S_{l+1}=S_{l+3}+\opa\chi R$, then $\opa\wt S_{l+1}=T$ on $X\setminus\ol U_{l+3}$ and $\wt S_{l+1}{_{|_{X\setminus\ol U_{l}}}}=\wt S_{l}$. The current $S$ on $X\setminus K$ defined by $S=\wt S_k$ on $X\setminus\ol U_k$ satisfies ${\rm supp}~S\in\Phi$ and $\opa S=T$ on $X\setminus K$.
\end{proof}

A compact subset $K$ in a complex manifold $X$ is called \emph{holomorphically convex} if and only if $H^{0,q}(K)=0$ for any $1\leq q\leq n-1$ and is a \emph{Stein compactum} if and only if it admits a Stein neighborhood basis. Note that any Stein compactum is clearly holomorphically convex.
We deduce from the previous theorems the following characterization of Runge compact subset of a Stein manifold.

\begin{cor}\label{rungecompact}
Let $X$ be a Stein manifold of complex dimension $n\geq 2$ and $K$ a holomorphically convex subset of $X$. Consider the following assertions:

(i) $K$ is Runge in $X$;

(ii) for any neighborhood $V$ of $K$, the natural map $H^{n,n}_c(V)\cap (\ec'_K)^{n,n}(X)\to H^{n,n}_c(X)$ is injective.

(iii) $\wt H^{n,n-1}_\Phi(X\setminus K)=0$.

{\parindent=0pt Then (i) is equivalent to (ii) and (iii) implies (ii). If moreover $K$ is a Stein compactum, then (ii) is equivalent to (iii).}
\end{cor}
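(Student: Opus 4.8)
=== PROOF PROPOSAL FOR COROLLARY \ref{rungecompact} ===

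The plan is to assemble the corollary from the three theorems that precede it in this subsection, checking that their hypotheses are met in the present setting. Recall that $K$ is holomorphically convex, i.e. $H^{0,q}(K)=0$ for all $1\leq q\leq n-1$; in particular $H^{0,1}(K)=0$, which is the hypothesis needed in Theorem~\ref{rungeK}.

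\textbf{Equivalence of (i) and (ii).} The implication (ii)$\Rightarrow$(i) is exactly the first assertion of Theorem~\ref{rungeK}: if for every $V$ in a neighborhood basis of $K$ the map $H^{n,n}_c(V)\cap(\ec'_K)^{n,n}(X)\to H^{n,n}_c(X)$ is injective, then $K$ is Runge in $X$. (If (ii) is stated for \emph{every} neighborhood $V$ rather than for a neighborhood basis, it holds a fortiori for a basis.) For the converse (i)$\Rightarrow$(ii), I would invoke the second assertion of Theorem~\ref{rungeK}: since $n\geq 2$, $H^{0,1}(K)=0$ (from holomorphic convexity), and $K$ is Runge in $X$, we conclude that for any neighborhood $V$ of $K$ the natural map $H^{n,n}_c(V)\cap(\ec'_K)^{n,n}(X)\to H^{n,n}_c(X)$ is injective. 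So (i)$\Leftrightarrow$(ii).

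\textbf{(iii)$\Rightarrow$(ii).} This is precisely the content of Theorem~\ref{complementK}'s companion result — namely the theorem stating that if $\wt H^{n,n-1}_\Phi(X\setminus K)=0$, then for any neighborhood $V$ of $K$ the natural map $H^{n,n}_c(V)\cap(\ec'_K)^{n,n}(X)\to H^{n,n}_c(X)$ is injective. Nothing further is needed here.

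\textbf{(ii)$\Rightarrow$(iii) when $K$ is a Stein compactum.} Here I would apply Theorem~\ref{complementK}. The point is to produce a decreasing Stein neighborhood basis $(U_k)_{k\in\nb}$ of $K$ with $\cap_k U_k=K$, each $X\setminus\ol U_k$ connected, and each map $H^{n,n}_c(U_k)\cap(\ec'_K)^{n,n}(X)\to H^{n,n}_c(X)$ injective. Since $K$ is a Stein compactum it has a Stein neighborhood basis; one may shrink and, because $X$ is Stein, arrange each $U_k$ to be a sublevel set of a smooth strictly plurisubharmonic exhaustion (so relatively compact, strictly pseudoconvex, with $X\setminus\ol U_k$ connected — using, as elsewhere in the paper, that $X\setminus D$ has no relatively compact components for such $D$ since $X$ is Stein and $n\geq 2$, via the Hartogs phenomenon). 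The injectivity of each map $H^{n,n}_c(U_k)\cap(\ec'_K)^{n,n}(X)\to H^{n,n}_c(X)$ is given by hypothesis (ii). Then Theorem~\ref{complementK} yields $\wt H^{n,n-1}_\Phi(X\setminus K)=0$, i.e. (iii).

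\textbf{Main obstacle.} The routine steps are the citations; the one point requiring genuine care is the normalization of the neighborhood basis in the last implication — ensuring that the chosen $U_k$ can simultaneously be taken Stein, strictly pseudoconvex with $\cc^2$ boundary, decreasing to $K$, \emph{and} with connected complement, so that Theorem~\ref{complementK} applies verbatim. The connectedness of $X\setminus\ol U_k$ is the delicate part: it follows from $X$ being Stein of dimension $\geq 2$ together with the Hartogs extension phenomenon (a relatively compact component of the complement would contradict it), exactly as in the discussion preceding Corollary~\ref{caracrunge}. Once the basis is fixed, everything else is a direct appeal to the three preceding theorems.
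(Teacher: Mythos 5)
Your assembly is correct and is exactly the paper's intended argument: the paper offers no separate proof of this corollary, stating only that it is deduced from Theorem \ref{rungeK}, the unnamed theorem following it, and Theorem \ref{complementK}, which are precisely the three results you invoke, with the same use of $H^{0,1}(K)=0$ from holomorphic convexity for (i)$\Rightarrow$(ii) and of the Stein neighborhood basis for (ii)$\Rightarrow$(iii). Your extra care about normalizing the $U_k$ (strictly pseudoconvex, with $X\setminus\ol U_k$ connected via the Hartogs phenomenon) only makes explicit a point the paper leaves implicit.
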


To end this section, we will give some sufficient conditions on the compact subset $K$ to ensure the stronger condition $H^{n,n-1}_\Phi(X\setminus K)=0$ to hold. The Dolbeault cohomology groups $H^{p,q}_\Phi(X\setminus K)=0$ are directly related to the study of removable singularities for CR forms or functions defined on a part of the boundary of a domain (see \cite{Lu}, \cite{LaLeMathAnn}, \cite{ChSt}, \cite{Lalivre}).

In the same way we proved Theorem \ref{complementK}, we get the following result.

\begin{thm}\label{complementKfort}
Let  $X$ be a Stein manifold of complex dimension $n\geq 2$, $K$ a compact subset of $X$ and $q$ a fixed integer such that $0\leq q\leq n-2$. We assume that $K$ admits a decreasing neighborhood basis $(U_k)_{k\in\nb}$ consisting of $(q+1)$-convex $q$-Runge domains such that $\cap_{k\in\nb}U_k=K$. Then $H^{n-p,n-q-1}_\Phi(X\setminus K)=0$, for any $0\leq p\leq n$.
\end{thm}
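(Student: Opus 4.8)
The plan is to follow the scheme of the proof of Theorem~\ref{complementK}, replacing the current-valued reasoning there by its smooth-form counterpart and using the hypotheses on the $U_k$ in place of the Stein-compactum hypotheses. Fix $0\le p\le n$, a neighborhood $V$ of $X\setminus K$, and a $\opa$-closed form $f\in\ci_{n-p,n-q-1}(V)$ with $\supp f\in\Phi$; the task is to produce a neighborhood $U\subset V$ of $X\setminus K$ and $g\in\ci_{n-p,n-q-2}(U)$ with support in $\Phi$ and $\opa g=f$ on $U$. First I would pass to a subsequence so that each $U_k$ is a strictly $(q+1)$-convex domain with $\cc^2$ boundary and $\ol U_{k+1}\subset U_k$, and set $\Omega_k=X\setminus\ol U_k$, an increasing family of open sets exhausting $X\setminus K$ with $\ol\Omega_k\subset\Omega_{k+1}$.

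The first step plays the role of Lemma~\ref{lemmecompl}: for each $k$, I would produce a neighborhood $W_k\subset V$ of $X\setminus U_k$ and a form $g_k\in\ci_{n-p,n-q-2}(W_k)$ with support in $\Phi$ (in particular vanishing outside a compact subset of $X$) such that $\opa g_k=f$ on $W_k$. This follows from Theorem~\ref{complement} applied with $D=U_k$: since $U_k$ is a $(q+1)$-convex manifold, $H^{n-p,n-q}_c(U_k)$ is Hausdorff by Theorem~16.1 in \cite{HeLe2}; since $X$ is Stein, $H^{n-p,n-q}_c(X)$ is Hausdorff and $H^{n-p,n-q-1}_c(X)=0$; and $U_k$ is a $q$-Runge domain in $X$. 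Hence $H^{n-p,n-q-1}_\Phi(X\setminus U_k)=0$, and applying this to $f$ — whose support is admissible for $U_k$ as well, since $\supp f\cup\ol U_k$ is compact — yields $W_k$ and $g_k$. Note that $g_k$ is then defined on a neighborhood of $\ol\Omega_k$.

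The second and main step is to patch the $g_k$ into a single $g$ by a Mittag--Leffler induction, just as in the proof of Theorem~\ref{complementK}. On a neighborhood of $\ol\Omega_k$ the difference $g_{k+1}-g_k$ is a $\opa$-closed $(n-p,n-q-2)$-form. When $q\le n-3$ it is $\opa$-exact there: strict $(q+1)$-convexity of $U_k$ gives $H^{r,s}(\ol U_k)=0$ for $s\ge q+1$, hence $H^{n-p,n-q-2}(X\setminus\ol U_k)=0$ by Proposition~1.1 in \cite{Lacalotte}, so $g_{k+1}-g_k=\opa R_k$ near $\ol\Omega_k$; then, with a cut-off $\chi_k$ equal to $1$ near $X\setminus\ol U_k$ and to $0$ near $\ol U_{k+1}$, I would replace $g_{k+1}$ by $g_{k+1}+\opa(\chi_k R_k)$ and iterate, shifting indices as in Theorem~\ref{complementK}, to obtain a coherent family $\wt g_k$ with $\wt g_{k+1}=\wt g_k$ on $\Omega_k$ and $\opa\wt g_k=f$ on $\Omega_k$. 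Each correction $\opa(\chi_k R_k)=\opa\chi_k\wedge R_k+\chi_k(g_{k+1}-g_k)$ again vanishes outside a compact subset of $X$, since $\opa\chi_k$ is compactly supported and $g_{k+1}-g_k$ vanishes near infinity; so the $\wt g_k$ still vanish outside a compact set, and the glued form $g:=\wt g_k$ on $\Omega_k$ has support in $\Phi$ and satisfies $\opa g=f$ on $X\setminus K$, so $U=X\setminus K$ does the job. When $q=n-2$ the bidegree drops to $(n-p,0)$ — the one place the smooth argument diverges from the current-valued one — and $g_{k+1}-g_k$ is a holomorphic $(n-p)$-form on $X\setminus\ol U_k$ vanishing outside a compact set, hence identically zero by analytic continuation, because $X\setminus\ol U_k$ is connected (the complement in the Stein manifold $X$ of the relatively compact $(q+1)$-convex domain $U_k$); no correction is then needed.

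I expect the patching to be the main obstacle, and inside it the bookkeeping that keeps every potential $g_k$ and every correction supported in $\Phi$ rather than escaping to infinity. The two mechanisms that make it work are the $\opa$-solvability on $X\setminus\ol U_k$ in bidegree $(n-p,n-q-2)$, which comes from the $(q+1)$-convexity of $U_k$ together with Proposition~1.1 in \cite{Lacalotte}, and the compact support of the differentials of the cut-offs — exactly the mechanisms already exploited in the proof of Theorem~\ref{complementK}.
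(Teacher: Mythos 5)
Your proof is essentially the paper's own: the authors justify Theorem \ref{complementKfort} only by the remark that it is proved ``in the same way'' as Theorem \ref{complementK}, and your write-up is a faithful execution of exactly that adaptation --- local solutions $g_k$ near $X\setminus U_k$ obtained from Theorem \ref{complement} (Hausdorffness of $H^{n-p,n-q}_c(U_k)$ coming from $(q+1)$-convexity via Theorem 16.1 in \cite{HeLe2}, the hypotheses on $X$ from Steinness, and the $q$-Runge assumption), followed by a Mittag--Leffler gluing that uses $H^{n-p,n-q-2}(X\setminus\ol U_k)=0$ from Proposition 1.1 in \cite{Lacalotte}, with the bidegree-zero case handled by analytic continuation exactly as the $n=2$ case of Theorem \ref{complementK}. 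The one soft spot is the boundary case $q=n-2$: there you need $X\setminus\ol U_k$ to have no relatively compact connected component in order to conclude that the holomorphic differences $g_{k+1}-g_k$ vanish, a hypothesis that Theorem \ref{complementK} states explicitly but Theorem \ref{complementKfort} does not, and your parenthetical claim that $(q+1)$-convexity of $U_k$ forces this is asserted rather than proved --- though this gap is equally present in the paper's own one-line proof.
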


\begin{rem}\label{O(X)convex}
Let us notice that if $p=q=0$, then the hypothesis in Theorem \ref{complementKfort} becomes: there exists a decreasing neighborhood basis $(U_k)_{k\in\nb}$ of $K$ consisting of pseudoconvex  domains which are Runge in $X$ such that $\cap_{k\in\nb}U_k=K$. But this property characterizes the compact subsets of the Stein manifold $X$ which are $\oc(X)$-convex, i.e. $K=\widehat K_X$, where $\widehat K_X=\{z\in X~|~\forall f\in\oc(X), |f(z)|\leq \sup_{\zeta\in K}|f(\zeta)|\}$. In fact if $K$ is $\oc(X)$-convex, then, by Theorem 8.17 from Chapter VII in \cite{Lalivre}, $K$ admits  a decreasing neighborhood basis $(U_k)_{k\in\nb}$ consisting of pseudoconvex  domains which are Runge in $X$. If $U$ is a pseudoconvex neighborhood of $K$ which is a Runge domain in $X$, then
$$\widehat K_X=\widehat K_U\subset\subset U,$$
which proves the converse.
\end{rem}

Following this remark, as a corollary of Corollary \ref{rungecompact} and Theorem \ref{complementKfort} we recover the Oka-Weil theorem.

\begin{cor}\label{okaweil}
Let $X$ be a Stein manifold of complex dimension $n\geq 2$ and  $K$ a compact subset of $X$. Assume $K$ is $\oc(X)$-convex, then $K$ is Runge in $X$.
\end{cor}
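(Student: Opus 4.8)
The plan is to obtain the statement by assembling Remark~\ref{O(X)convex}, Theorem~\ref{complementKfort}, and Corollary~\ref{rungecompact}: essentially all of the analysis has already been carried out in those results, so what remains is to feed the $\oc(X)$-convexity of $K$ into the existing machinery. First I would record two consequences of $\oc(X)$-convexity. On the one hand, since $X$ is Stein, $K=\widehat K_X$ admits a pseudoconvex neighborhood basis, which is automatically a Stein neighborhood basis; thus $K$ is a Stein compactum and in particular holomorphically convex, so Corollary~\ref{rungecompact} applies to $K$. On the other hand, by Remark~\ref{O(X)convex} --- which rests on Theorem~8.17 from Chapter~VII of \cite{Lalivre} --- one may choose a decreasing neighborhood basis $(U_k)_{k\in\nb}$ of $K$, with $\bigcap_{k\in\nb}U_k=K$, in which each $U_k$ is a pseudoconvex domain that is Runge in $X$.

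Next I would apply Theorem~\ref{complementKfort} in the case $p=q=0$. A pseudoconvex domain in a Stein manifold is $1$-convex, and a Runge domain is $0$-Runge by definition, so the basis $(U_k)_{k\in\nb}$ above is precisely the decreasing neighborhood basis of $(q+1)$-convex $q$-Runge domains that Theorem~\ref{complementKfort} requires for $q=0$ --- this is the translation already performed in Remark~\ref{O(X)convex}. The theorem then gives $H^{n,n-1}_\Phi(X\setminus K)=0$, where $\Phi$ is the family of closed subsets $F$ of $X\setminus K$ with $F\cup K$ compact in $X$. This smooth-form vanishing is the stronger condition noted just before Theorem~\ref{complementKfort}, and it implies the corresponding statement $\wt H^{n,n-1}_\Phi(X\setminus K)=0$ for extendable currents, i.e.\ assertion~(iii) of Corollary~\ref{rungecompact}. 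Since $K$ is holomorphically convex, Corollary~\ref{rungecompact} lets us conclude: (iii) implies (ii), and (ii) is equivalent to (i), so $K$ is Runge in $X$.

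The only step that is not pure bookkeeping --- and the point I expect to be the main (if mild) obstacle --- is the implication $H^{n,n-1}_\Phi(X\setminus K)=0\Rightarrow\wt H^{n,n-1}_\Phi(X\setminus K)=0$, i.e.\ the passage from smooth forms to extendable currents with supports in $\Phi$. I would handle it by a standard regularization: given an extendable $\opa$-closed $(n,n-1)$-current $T$ on $X\setminus K$ with support in $\Phi$, extend it to a current $\wt T$ on $X$, smooth $\wt T$ by convolution to a $\opa$-closed form differing from it by a $\opa$-exact term whose potential is supported in an arbitrarily small neighborhood of $\supp\wt T$ (hence still in $\Phi$), and apply the form-level vanishing to the regularization; equivalently, the Dolbeault complexes of smooth forms and of currents with supports in the paracompactifying family $\Phi$ compute the same cohomology. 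As a sanity check, the statement can also be seen without any cohomology: each $U_k$ being Runge in $X$, a function $\varphi\in\oc(U_k)$ is a uniform limit on $\overline{U_{k+1}}\subset\subset U_k$ of functions in $\oc(X)$, and Cauchy estimates on $U_{k+1}\supset K$ promote this to convergence in $\ci(K)$ --- but the cohomological route above is the one in keeping with the methods of this paper.
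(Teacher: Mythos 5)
Your argument is correct and is essentially the paper's own route: the paper derives Corollary \ref{okaweil} precisely by combining Remark \ref{O(X)convex}, Theorem \ref{complementKfort} with $p=q=0$, and the chain (iii) $\Rightarrow$ (ii) $\Rightarrow$ (i) of Corollary \ref{rungecompact}. The one point you elaborate that the paper leaves implicit --- that the smooth-form vanishing $H^{n,n-1}_\Phi(X\setminus K)=0$ is indeed the ``stronger'' condition and yields $\wt H^{n,n-1}_\Phi(X\setminus K)=0$ for extendable currents, via regularization or the equality of form and current cohomology with supports in the paracompactifying family $\Phi$ --- is handled correctly.
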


Moreover using once again Theorem 8.17 from Chapter VII in \cite{Lalivre} and Corollary \ref{extension}, we can also get that if $K$ is an $\oc(X)$-convex compact subset of the Stein manifold $X$, the hypothesis of Theorem \ref{complementKfort} is fulfilled for all $0\leq q\leq n-2$.

\section{Some new Runge density properties}

Let $X$ be a complex manifold of complex dimension $n\geq 1$ and  $D\subset\subset X$ a relatively compact domain in $X$.

In this section we will always assume that the boundary of $D$ is Lipschitz to be able to use Serre duality. Lipschitz boundary ensures that the space of $(p,q)$-currents in $D$, which extend as a current to $X$ and the space $\dc^{n-p,n-q}_{\ol D}(X)$ of smooth $(n-p,n-q)$-forms with support contained in the closure of $D$ are dual to each other and the space $\ci_{p,q}(\ol D)$ of smooth $(p,q)$-forms in $\ol D$ and the space ${\ec'}^{n-p,n-q}_{\ol D}(X)$ of $(n-p,n-q)$-currents with support contained in the closure of $D$ are dual to each other (see \cite{LaShdualiteL2}).

Moreover, let us consider the densely defined operators $\opa$ from $L^2_{p,q}(D)$ into $L^2_{p,q+1}(D)$ whose domain is the subspace of $(p,q)$-forms $f$ such that $f\in L^2_{p,q}(D)$ and $\opa f\in L^2_{p,q+1}(D)$ and $\opa_{\wt c}$ from $L^2_{p,q}(D)$ into $L^2_{p,q+1}(D)$ whose domain is the subspace of $(p,q)$-forms $f$ such that $f\in L^2_{p,q}(X)$, ${\rm supp}~f\subset\ol D$ and $\opa f\in L^2_{p,q+1}(X)$. If the boundary of $D$ is Lipschitz, then the associated complexes are dual to each other (see lemma 2.4 in \cite{LaShdualiteL2}).

\subsection{The $\ci$-Runge density property and the $\ci$-Mergelyan property}

\begin{defin}
A relatively compact domain $D$ in $X$ is \emph{$\ci$ $q$-Runge} in $X$, for $0\leq q\leq n-1$, if and only if, for any $0\leq p\leq n$, the space $Z^{p,q}_\infty(X)$ of smooth $\opa$-closed $(p,q)$-forms in $X$ is dense in the space $Z^{p,q}_\infty(\ol D)$ of  $\opa$-closed $(p,q)$-forms in $D$, which  are smooth on the closure of $D$, for the smooth topology  on the closure of $D$.

For $q=0$, we will simply say that the domain is \emph{$\ci$-Runge} in $X$, which means that the space $\oc(X)$ of holomorphic functions in $X$ is dense in the space $\oc(D)\cap\ci(\ol D)$ of holomorphic functions in $D$, which are smooth on the closure of $D$, for the smooth topology  on the closure of $D$.

\end{defin}

If $D\subset\subset X$ is a relatively compact domain in $X$, we denote by $H^{r,s}_{\ol D, cur}(X)$ the  Dolbeault cohomology groups of currents with prescribed support in $\ol D$ and by $\check{H}^{r,s}_{\Phi}(X\setminus\ol D)$ the Dolbeault cohomology groups of extendable currents in $X\setminus\ol D$ vanishing outside a compact subset of $X$.

\begin{thm}\label{stronglyrunge}
Let $X$ be a non-compact complex manifold of complex dimension $n\geq 1$, $D\subset\subset X$ a relatively compact domain with Lipschitz boundary in $X$ and $q$ a fixed integer such that $0\leq q\leq n-1$. Assume  that, for any $0\leq p\leq n$, $H^{n-p,n-q}_c(X)$ and  $H^{n-p,n-q}_{\ol D,cur}(X)$ are Hausdorff. Then $D$ is a $\ci$ $q$-Runge domain in $X$ if and only if the natural map
$H^{n-p,n-q}_{\ol D,cur}(X)\to H^{n-p,n-q}_c(X)$
is injective.
\end{thm}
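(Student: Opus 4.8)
The plan is to mimic the proof of Theorem~\ref{runge}, replacing the compactly supported pairing used there by the Serre-type pairing between $\ci_{p,q}(\ol D)$ and $({\ec'})^{n-p,n-q}_{\ol D}(X)$ that is available because $D$ has Lipschitz boundary (as recalled just before the statement). First I would prove the direct implication: assume $D$ is $\ci$ $q$-Runge in $X$ and let $T\in({\ec'})^{n-p,n-q}_{\ol D}(X)$ be a $\opa$-closed current whose class vanishes in $H^{n-p,n-q}_c(X)$, i.e.\ $T=\opa S$ for some $S\in\dc^{n-p,n-q-1}(X)$. Since $H^{n-p,n-q}_{\ol D,cur}(X)$ is Hausdorff, the vanishing of $[T]$ in that group is equivalent (by duality) to $\langle T,\varphi\rangle=0$ for every $\varphi\in Z^{p,q}_\infty(\ol D)$. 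Given such a $\varphi$, $\ci$ $q$-Rungeness furnishes a sequence $(\varphi_k)$ in $Z^{p,q}_\infty(X)$ converging to $\varphi$ in $\ci(\ol D)$, so
$$\langle T,\varphi\rangle=\lim_{k\to\infty}\langle \opa S,\varphi_k\rangle=\pm\lim_{k\to\infty}\langle S,\opa\varphi_k\rangle=0,$$
which shows $[T]=0$ in $H^{n-p,n-q}_{\ol D,cur}(X)$ and hence the natural map is injective.

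For the converse, I would again invoke the Hahn--Banach theorem: it suffices to show that whenever $T$ is a current in the predual of $Z^{p,q}_\infty(\ol D)$ — that is, $T\in({\ec'})^{n-p,n-q}_{\ol D}(X)$ — with $\langle T,f\rangle=0$ for all $f\in Z^{p,q}_\infty(X)$, then $\langle T,g\rangle=0$ for all $g\in Z^{p,q}_\infty(\ol D)$. The hypothesis on $T$ says its class is orthogonal to all $\opa$-closed smooth forms on $X$; since $H^{n-p,n-q}_c(X)$ is Hausdorff this yields $T=\opa S$ for some $S\in\dc^{n-p,n-q-1}(X)$, so in particular $[T]=0$ in $H^{n-p,n-q}_c(X)$. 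The injectivity of $H^{n-p,n-q}_{\ol D,cur}(X)\to H^{n-p,n-q}_c(X)$ then gives a current $U\in({\ec'})^{n-p,n-q-1}_{\ol D}(X)$ with $T=\opa U$, and therefore for any $g\in Z^{p,q}_\infty(\ol D)$,
$$\langle T,g\rangle=\langle\opa U,g\rangle=\pm\langle U,\opa g\rangle=0.$$

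The main obstacle — and the place where the Lipschitz hypothesis is essential — is making the two dualities line up correctly: one needs that $({\ec'})^{n-p,n-q}_{\ol D}(X)$ is exactly the topological dual of $\ci_{p,q}(\ol D)$, that the operator $\opa$ on currents supported in $\ol D$ is the transpose of $\opa$ acting on smooth forms on $\ol D$, and that Hausdorffness of $H^{n-p,n-q}_{\ol D,cur}(X)$ translates into the statement that a class vanishes iff it annihilates all $\opa$-closed test forms. These are precisely the facts quoted from \cite{LaShdualiteL2} in the paragraph preceding the theorem, so modulo citing them the argument is a formal transcription of the proof of Theorem~\ref{runge}. A secondary point to check is that integration by parts $\langle \opa S,\varphi_k\rangle=\pm\langle S,\opa\varphi_k\rangle$ is legitimate: here $S$ has compact support in $X$ while $\varphi_k$ is globally defined and smooth, so there is no boundary term and no support issue, and the limit passes through the pairing because convergence in $\ci(\ol D)$ controls the relevant seminorms against the fixed compactly supported $S$.
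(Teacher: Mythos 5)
Your proposal is correct and follows exactly the route the paper takes: the paper's proof of Theorem~\ref{stronglyrunge} consists precisely of invoking the duality between $\ci_{p,q}(\ol D)$ and ${\ec'}^{n-p,n-q}_{\ol D}(X)$ guaranteed by the Lipschitz boundary and then transcribing the argument of Theorem~\ref{runge}, which is what you do. The only cosmetic slip is writing $S\in\dc^{n-p,n-q-1}(X)$ where one should allow a compactly supported current rather than a smooth form, but this does not affect the argument.
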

\begin{proof}
Since $D$ has a Lipschitz boundary, the space $\ci_{p,q}(\ol D)$ of smooth functions in $\ol D$ and the space ${\ec'}^{n-p,n-q}_{\ol D}(X)$ of $(n-p,n-q)$-currents with support contained in the closure of $D$ are dual to each other, consequently the proof follows the same arguments as in the proof of Theorem \ref{runge}.
\end{proof}

\begin{prop}\label{strongcomplement}
Let $X$ be a non-compact complex manifold of complex dimension $n\geq 2$, $D\subset\subset X$ a relatively compact domain in $X$ and $q$ be a fixed integer such that $0\leq q\leq n-2$. Assume  that, for some $0\leq p\leq n$,  $H^{n-p,n-q-1}_c(X)=0$.
 Then $\check{H}^{n-p,n-q-1}_{\Phi}(X\setminus\ol D)=0$ if and only if the natural map
$H^{n-p,n-q}_{\ol D,cur}(X)\to H^{n-p,n-q}_c(X)$
is injective.
\end{prop}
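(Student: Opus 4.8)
The plan is to mimic the proof of Theorem \ref{complement}, replacing the smooth forms by currents and the compact-support Dolbeault groups of $D$ by the Dolbeault groups of currents with prescribed support in $\ol D$. Throughout, $\Phi$ denotes the family of closed subsets $F$ of a neighborhood $V$ of $X\setminus\ol D$ with $F\cup\ol D$ compact, and ``$\check{H}^{n-p,n-q-1}_{\Phi}(X\setminus\ol D)=0$'' means: for every such $V$ and every extendable $\opa$-closed $(n-p,n-q-1)$-current $T$ on $V$ vanishing outside a compact subset of $X$, there is a neighborhood $U\subset V$ of $X\setminus\ol D$ and an $(n-p,n-q-2)$-current $S$ on $U$, vanishing outside a compact subset of $X$ (so $\supp S\cap U\in\Phi$), with $\opa S=T$ on $U$.

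First I would prove the ``only if'' direction. Let $f$ be an $(n-p,n-q)$-current with support in $\ol D$ whose class vanishes in $H^{n-p,n-q}_c(X)$, say $f=\opa g$ with $g$ a compactly supported $(n-p,n-q-1)$-current in $X$. Then $\opa g=0$ on a neighborhood $V$ of $X\setminus\ol D$, $g$ restricted to $V$ is extendable (it is globally defined) and vanishes outside a compact subset of $X$, so $\supp g_{|V}\in\Phi$. By hypothesis there is a neighborhood $U\subset V$ of $X\setminus\ol D$ and an $(n-p,n-q-2)$-current $h$ on $U$, vanishing outside a compact subset of $X$, with $\opa h=g$ on $U$. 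Choose a smooth $\chi$ supported in $U$, equal to $1$ on a neighborhood of $X\setminus\ol D$; then $\wt h:=\chi h$ extends by $0$ to a current on $X$, and $u:=g-\opa\wt h$ is a current with support in $\ol D$ (since $g-\opa h=0$ on a neighborhood of $X\setminus\ol D$) satisfying $\opa u=\opa g=f$. Hence $[f]=0$ in $H^{n-p,n-q}_{\ol D,cur}(X)$, i.e. the natural map is injective. Note this direction does not use $H^{n-p,n-q-1}_c(X)=0$.

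Conversely, assume the map $H^{n-p,n-q}_{\ol D,cur}(X)\to H^{n-p,n-q}_c(X)$ is injective. Let $V$ be a neighborhood of $X\setminus\ol D$ and $T$ an extendable $\opa$-closed $(n-p,n-q-1)$-current on $V$ vanishing outside a compact subset of $X$. Pick $\chi$ smooth, supported in $V$, equal to $1$ on a neighborhood $W$ of $X\setminus\ol D$; extend $\chi T$ by $0$ to a current $\wt T$ on $X$ (using extendability of $T$), which has support in a compact subset of $X$ and satisfies $\opa\wt T=\opa(\chi T)$ with support a compact subset of $\ol D$ (since $\opa\chi$ vanishes on $W$ and $\supp T\setminus W\subset\ol D$-side of $V$, more precisely $\opa\chi\wedge T$ is supported in $V\setminus W$ which together with $\ol D$ is the relevant compact set; the key point is $\opa\wt T$ is supported in $\ol D$). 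The class $[\opa\wt T]$ vanishes in $H^{n-p,n-q}_c(X)$ since $\opa\wt T=\opa\wt T$ is itself $\opa$-exact with the global primitive $\wt T$; by injectivity $\opa\wt T=\opa u$ for an $(n-p,n-q-1)$-current $u$ with support in $\ol D$. Set $h=\wt T-u$: then $h=T$ on a neighborhood of $X\setminus\ol D$, $\opa h=0$ on $X$, and $h$ vanishes outside a compact subset of $X$. Since $H^{n-p,n-q-1}_c(X)=0$ and $h$ has compact support (being a difference of two compactly supported currents), there is a compactly supported $(n-p,n-q-2)$-current $S$ on $X$ with $\opa S=h$; its restriction to the neighborhood $U$ where $h=T$ gives $\opa S=T$ on $U$ with $\supp S\cap U\in\Phi$, proving $\check{H}^{n-p,n-q-1}_{\Phi}(X\setminus\ol D)=0$.

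The main obstacle I anticipate is the careful bookkeeping of supports when multiplying the extendable current $T$ by the cutoff $\chi$ and extending by zero: one must check that $\chi T$ extends to a well-defined current on $X$ (this is where extendability of $T$ and the Lipschitz hypothesis on $\partial D$, which guarantees the duality $\ci_{p,q}(\ol D)\leftrightarrow{\ec'}^{n-p,n-q}_{\ol D}(X)$, enter) and that $\opa(\chi T)=\opa\chi\wedge T$ has support in a compact subset of $\ol D$ rather than merely in $\ol V$. The verification that ``vanishing outside a compact subset of $X$'' is preserved under each operation (restriction, multiplication by $\chi$, taking $\opa$-primitives via $H^{n-p,n-q-1}_c(X)=0$) is routine but must be tracked, exactly as in the proof of Theorem \ref{complement}; since the duality framework has already been set up in the preamble to Section 2, the rest goes through verbatim.
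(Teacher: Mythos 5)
Your overall architecture (pass between a current supported in $\ol D$ and a current on the complement, use the injectivity, then use $H^{n-p,n-q-1}_c(X)=0$ to get a primitive) is the same as the paper's, but there is a genuine gap in how you handle the set $X\setminus\ol D$: you treat it as if it were the \emph{closed} set $X\setminus D$ of Theorem \ref{complement} and import the cutoff-function technique from there. But $X\setminus\ol D$ is \emph{open}, hence is its own smallest neighborhood: a smooth $\chi$ on $X$ supported in a neighborhood $U$ of $X\setminus\ol D$ and equal to $1$ on another neighborhood of $X\setminus\ol D$ would, when $U=X\setminus\ol D$, have to be the characteristic function of $X\setminus\ol D$, which is not smooth across $\partial D$. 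Since nothing in the hypothesis $\check{H}^{n-p,n-q-1}_{\Phi}(X\setminus\ol D)=0$ lets you insist that the primitive live on an open set strictly larger than $X\setminus\ol D$, the cutoffs you invoke in both directions do not exist. This is fatal to your ``only if'' direction as written: your formulation of $\check{H}_{\Phi}(X\setminus\ol D)=0$ does not require the primitive $h$ to be extendable, and without extendability (or a cutoff) you cannot manufacture the global correction term. The paper's group is by definition the cohomology of the complex of \emph{extendable} currents on $X\setminus\ol D$, and its proof uses no cutoff at all: it takes an extension $\wt U$ of the primitive to all of $X$ and sets $R=S-\opa\wt U$, which vanishes on $X\setminus\ol D$ and hence has support in $\ol D$.

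The converse direction is salvageable with a one-line repair, since there $T$ is extendable by hypothesis: drop $\chi$ entirely and take a compactly supported extension $\wt T$ of $T$ to $X$; then $\opa\wt T$ is automatically supported in $\ol D$, injectivity gives $u$ supported in $\ol D$ with $\opa u=\opa\wt T$, the current $h=\wt T-u$ is $\opa$-closed with compact support and restricts to $T$ on $X\setminus\ol D$, and $H^{n-p,n-q-1}_c(X)=0$ finishes the argument exactly as you say. Two smaller points: the proposition assumes no boundary regularity, so the Lipschitz duality you appeal to at the end is neither available nor needed here (extendability is built into the definition of $\check{H}_\Phi$); and your observation that $H^{n-p,n-q-1}_c(X)=0$ is used only in the converse direction agrees with the paper's remark.
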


\begin{proof}
We first consider the necessary condition. Let $T\in(\ec')^{n-p,n-q}_{\ol D}(X)$ such that $T=\opa S$ with $S\in(\ec')^{n-p,n-q-1}(X)$. Since the support of $T$ is contained in $\ol D$, we have $\opa S=0$ on $X\setminus\ol D$. Therefore the vanishing of the group $\check{H}^{n-p,n-q-1}_{c,\infty}(X\setminus D)$ implies that there exists $U\in(\check{\dc'})^{n-p,n-q-2}(X\setminus\ol D)$ such that $\opa U=S$ on $X\setminus\ol D$. Let $\wt U$ be an extension of $U$ to $X$, we set $R=S-\opa\wt U$, then  $R$ is a current on $X$, $T=\opa R$ and ${\rm supp}~R\subset\ol D$.

Conversely, let $S$ be a $\opa$-closed, extendable $(n-p,n-q-1)$-current on $X\setminus D$ with compact support and $\wt S$ a smooth extension of $S$ to $X$, then $\wt S$ has compact support in $X$ and $T=\opa\wt S$ is an element of $(\ec')^{n-p,n-q}_{\ol D}(X)$. By the injectivity of the natural map
$H^{n-p,n-q}_{\ol D,cur}(X)\to H^{n-p,n-q}_c(X)$, there exists $U\in (\ec')^{n-p,n-q-1}_{\ol D}(X)$ such that $\opa U=T$. We set $R=\wt S-U$, $R$ is then a smooth $\opa$-closed $(n-p,n-q-1)$-current with compact support in $X$ such that $R_{|_{X\setminus D}}=S$. Since $H^{n-p,n-q-1}_c(X)=0$, we have $R=\opa W$ with $W$ with compact support in $X$. Finally we get $S=R_{|_{X\setminus\ol D}}=\opa W_{|_{X\setminus\ol D}}$.
\end{proof}

Note that the hypothesis $H^{n-p,n-q-1}_c(X)=0$ is only used to prove the sufficient condition, i.e. if the natural map
$H^{n-p,n-q}_{\ol D,cur}(X)\to H^{n-p,n-q}_c(X)$
is injective, then $\check{H}^{n-p,n-q-1}_{\Phi}(X\setminus\ol D)=0$.

In the spirit of Corollary \ref{extension}, we can derive the next corollary from Theorem 16.1 in \cite{HeLe2} and Theorem 4 in \cite{Saext}.

\begin{cor}\label{strongextension}
Let $X$ be a non-compact complex manifold of complex dimension $n\geq 2$, $D\subset\subset X$ a relatively compact domain in $X$ with smooth boundary and $q$ a fixed integer such that $0\leq q\leq n-2$. Assume  that $X$ is a $(q+1)$-convex extension of $\ol D$. Then $D$ is a $\ci$ $q$-Runge domain in $X$.

In particular, if $p=q=0$, $n\geq 2$ and $X$ is a $1$-convex extension of $\ol D$, then $D$ is $\ci$-Runge in $X$.
\end{cor}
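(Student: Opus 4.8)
The plan is to deduce Corollary \ref{strongextension} from the combination of Proposition \ref{strongcomplement} and Theorem \ref{stronglyrunge}, using the Andreotti--Grauert and Sambou type results as black boxes to verify the hypotheses of those two statements. Specifically, since $X$ is a $(q+1)$-convex extension of $\ol D$, the manifold $X$ is $(q+1)$-convex, so by Theorem 16.1 in \cite{HeLe2} we have $H^{n-p,n-q}_c(X)$ Hausdorff and $H^{n-p,n-q-1}_c(X)=0$ for every $0\le p\le n$ (here the range $0\le q\le n-2$ is what makes these vanishing/Hausdorff statements fall into the Andreotti--Grauert range). This immediately gives the hypothesis $H^{n-p,n-q-1}_c(X)=0$ needed in Proposition \ref{strongcomplement}, and half of the hypotheses of Theorem \ref{stronglyrunge}.

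Next I would address the group $H^{n-p,n-q}_{\ol D,cur}(X)$ and the vanishing $\check H^{n-p,n-q-1}_\Phi(X\setminus\ol D)=0$. Here is where Theorem 4 in \cite{Saext} (Sambou's extension/vanishing result for cohomology with prescribed or extendable support on the complement of a $(q+1)$-convex extension) enters: because $X$ is a $(q+1)$-convex extension of $\ol D$ and $\partial D$ is smooth, that theorem yields $\check H^{n-p,n-q-1}_\Phi(X\setminus\ol D)=0$ directly. Combined with the already-established $H^{n-p,n-q-1}_c(X)=0$, Proposition \ref{strongcomplement} then tells us that the natural map $H^{n-p,n-q}_{\ol D,cur}(X)\to H^{n-p,n-q}_c(X)$ is injective for every $0\le p\le n$. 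It remains only to check that $H^{n-p,n-q}_{\ol D,cur}(X)$ is Hausdorff; this follows from the injectivity of that map into the Hausdorff space $H^{n-p,n-q}_c(X)$ together with the fact that $H^{n-p,n-q-1}_c(X)=0$ forces the relevant range of the current complex to be exact, so the image of $\opa$ is closed — alternatively one cites the Serre-duality/Hausdorffness discussion following Corollary \ref{rungeStein} applied to the corresponding $(p,q+1)$-group, which is finite dimensional (hence Hausdorff) on the $(q+1)$-convex manifold $X$ and on $D$.

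With both hypotheses of Theorem \ref{stronglyrunge} verified — $H^{n-p,n-q}_c(X)$ and $H^{n-p,n-q}_{\ol D,cur}(X)$ Hausdorff, and the natural map between them injective — that theorem concludes that $D$ is a $\ci$ $q$-Runge domain in $X$, which is precisely the assertion. For the final sentence of the corollary one simply specializes to $p=q=0$: a $1$-convex extension is the case $q=0$, the hypothesis $0\le q\le n-2$ requires $n\ge 2$, the relevant cohomology groups $H^{n,n}_c(X)$, $H^{n,n}_{\ol D,cur}(X)$ are Hausdorff and $H^{n,n-1}_c(X)=0$ on the $1$-convex manifold $X$, and $\ci$ $0$-Runge is by definition $\ci$-Runge.

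I expect the main obstacle to be the bookkeeping around which cohomology groups are Hausdorff and in exactly which degree range the cited Andreotti--Grauert (Theorem 16.1 of \cite{HeLe2}) and Sambou (Theorem 4 of \cite{Saext}) results apply; in particular one must be careful that the bidegree $(n-p,n-q)$ with $0\le q\le n-2$ lands in the range where $(q+1)$-convexity gives finite-dimensionality of $H^{p,q+1}$ and hence, via Serre duality, Hausdorffness of $H^{n-p,n-q}_c$. The analytic content is entirely outsourced; the work is checking that the index constraints in the hypotheses of Theorem \ref{stronglyrunge} and Proposition \ref{strongcomplement} are met under the single geometric assumption that $X$ is a $(q+1)$-convex extension of $\ol D$.
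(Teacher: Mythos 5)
Your route is the one the paper intends: the paper's entire ``proof'' of Corollary \ref{strongextension} is the sentence preceding it, saying that the result follows from Theorem 16.1 of \cite{HeLe2} and Theorem 4 of \cite{Saext} in the spirit of Corollary \ref{extension} --- i.e.\ exactly by feeding Sambou's vanishing $\check{H}^{n-p,n-q-1}_{\Phi}(X\setminus\ol D)=0$ into Proposition \ref{strongcomplement} and the Andreotti--Grauert Hausdorffness into Theorem \ref{stronglyrunge}. Two of your intermediate claims need repair, though neither is fatal. First, $(q+1)$-convexity of $X$ gives finite dimensionality of $H^{p,q+1}(X)$ and hence, by Serre duality, Hausdorffness of $H^{n-p,n-q}_c(X)$; it does \emph{not} give $H^{n-p,n-q-1}_c(X)=0$: the Serre dual of that group is $H^{p,q+1}(X)$, which on a merely $(q+1)$-convex manifold is finite dimensional but in general nonzero (already for $q=0$ a $1$-convex manifold with a nontrivial exceptional set can have $H^{p,1}(X)\neq 0$; vanishing would require $(q+1)$-completeness, e.g.\ $X$ Stein when $q=0$). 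Fortunately this hypothesis of Proposition \ref{strongcomplement} is, as the remark following that proposition states, only used for the converse implication (injectivity implies vanishing of $\check{H}_{\Phi}$), so the direction you actually need survives without it. Second, your justification of the Hausdorffness of $H^{n-p,n-q}_{\ol D,cur}(X)$ via ``$H^{n-p,n-q-1}_c(X)=0$ forces exactness'' is therefore both unavailable and unnecessary: once the map $H^{n-p,n-q}_{\ol D,cur}(X)\to H^{n-p,n-q}_c(X)$ is known to be injective, the $\opa$-boundaries supported in $\ol D$ are exactly the trace, on the closed subspace of $\opa$-closed currents supported in $\ol D$, of the space of compactly supported boundaries, which is closed by the Hausdorffness of $H^{n-p,n-q}_c(X)$; this is the same device the paper uses to upgrade injectivity to Hausdorffness for $H^{n,n}_c(D)$ and for $H^{n,n}_{\ol D,L^2}(X)$. (Your alternative via finiteness of the boundary cohomology $H^{p,q+1}_\infty(\ol D)$ of the $(q+1)$-convex boundary also works and is closer to what the paper does in Corollary \ref{mergelyan} for $q=0$.) With those two repairs your argument coincides with the paper's.
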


In the case when $q=0$, we derive the following result:

\begin{cor}\label{mergelyan}
Let $X$ be a Stein  manifold of complex dimension $n\geq 2$ and $D\subset\subset X$ a relatively compact pseudoconvex domain with Lipschitz boundary in $X$. Consider the following assertions:

(i) the domain $D$ is $\ci$-Runge in $X$,

(ii) the natural map
$H^{n,n}_{\ol D,cur}(X)\to H^{n,n}_c(X)$
is injective,

(iii) $\check{H}^{n,n-1}_{\Phi}(X\setminus\ol D)=0$,

{\parindent=0pt Then (iii) is equivalent to (ii) and (ii) implies (i). If moreover $D$ has smooth boundary, then (i) implies (ii).}
\end{cor}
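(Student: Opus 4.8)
The plan is to combine the two previous results in this subsection, Theorem \ref{stronglyrunge} and Proposition \ref{strongcomplement}, applied with $q=0$ (so $n-q=n$, $n-q-1=n-1$), together with the special features of a Stein ambient manifold and a pseudoconvex domain $D$. The equivalence (ii)$\Leftrightarrow$(iii) should come essentially for free: since $X$ is Stein, $H^{n,n-1}_c(X)=0$, so the hypothesis of Proposition \ref{strongcomplement} is met, and that proposition gives exactly the equivalence of the injectivity of $H^{n,n}_{\ol D,cur}(X)\to H^{n,n}_c(X)$ with $\check H^{n,n-1}_\Phi(X\setminus\ol D)=0$. For the implication (ii)$\Rightarrow$(i), I would like to invoke Theorem \ref{stronglyrunge} with $q=0$; this requires checking that $H^{n,n}_c(X)$ and $H^{n,n}_{\ol D,cur}(X)$ are Hausdorff. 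The first is Hausdorff because $X$ is Stein. For the second, the natural thing is to argue as in the proof of the last corollary in Section \ref{srunge}: the injectivity of the map into $H^{n,n}_c(X)$ together with the Hausdorffness of the target forces $H^{n,n}_{\ol D,cur}(X)$ to be Hausdorff — given $T\in(\ec')^{n,n}_{\ol D}(X)$ lying in the closure of the range, pair it against $\oc(X)$, deduce via Hausdorffness of $H^{n,n}_c(X)$ that $T=\opa S$ with $S$ of compact support in $X$, hence $[T]=0$ in $H^{n,n}_c(X)$, and then apply injectivity to get $[T]=0$ in $H^{n,n}_{\ol D,cur}(X)$. Here I should use pseudoconvexity of $D$ to know that testing against $\oc(X)$ (equivalently against $\oc(D)\cap\ci(\ol D)$ via the duality) is enough, i.e. that the $L^2$- or $\ci$-closure of the range is detected by holomorphic functions; more precisely pseudoconvexity of $D$ gives solvability of $\opa$ in the relevant top degree on $\ol D$, which is what makes the current-Dolbeault group in question behave well.

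For the remaining implication (i)$\Rightarrow$(ii) under the extra hypothesis that $D$ has smooth boundary, I would again appeal to Theorem \ref{stronglyrunge}, whose statement is an equivalence: once the two Hausdorffness conditions are in place, $\ci$-$0$-Runge-ness of $D$ in $X$ is equivalent to the injectivity of $H^{n,n}_{\ol D,cur}(X)\to H^{n,n}_c(X)$. So the only thing to re-examine is why smoothness of $\pa D$ (rather than merely Lipschitz) is needed here: it is needed to guarantee that $H^{n,n}_{\ol D,cur}(X)$ is Hausdorff even without assuming (ii) a priori — for the direction (i)$\Rightarrow$(ii) one cannot bootstrap Hausdorffness from injectivity, so one needs an independent reason. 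With smooth boundary and $D$ pseudoconvex one has the $L^2$ (or $\ci$) solvability of $\opa$ with prescribed support in $\ol D$ in the appropriate degree, via the duality recalled at the beginning of Section 2 between $\opa$ on $D$ and $\opa_{\wt c}$ with support in $\ol D$ (Lemma 2.4 in \cite{LaShdualiteL2}); pseudoconvexity of $D$ makes the relevant range closed, hence $H^{n,n}_{\ol D,cur}(X)$ Hausdorff, and then Theorem \ref{stronglyrunge} applies in both directions.

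The main obstacle I anticipate is precisely the Hausdorffness of $H^{n,n}_{\ol D,cur}(X)$, and keeping straight which hypotheses (Stein $X$, pseudoconvex $D$, Lipschitz versus smooth boundary) are invoked for which direction. For (ii)$\Rightarrow$(i) and (ii)$\Leftrightarrow$(iii), Lipschitz boundary suffices because Hausdorffness can be extracted from the injectivity hypothesis itself, exactly as in the earlier corollary; for (i)$\Rightarrow$(ii) one genuinely needs the unconditional Hausdorffness of $H^{n,n}_{\ol D,cur}(X)$, which is where smooth boundary together with pseudoconvexity of $D$ enters, through the known $\opa$-solvability/closed-range results for pseudoconvex domains with smooth boundary and the duality between the complexes $(L^2_{\bullet},\opa)$ on $D$ and $(L^2_{\bullet},\opa_{\wt c})$ with support in $\ol D$. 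Modulo that point, all three implications are immediate consequences of Theorem \ref{stronglyrunge} and Proposition \ref{strongcomplement}.
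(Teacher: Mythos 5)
Your proposal is correct and follows essentially the same route as the paper: the equivalence of (ii) and (iii) comes from Proposition \ref{strongcomplement} using $H^{n,n-1}_c(X)=0$ for $X$ Stein, and the link between (i) and (ii) comes from Theorem \ref{stronglyrunge}, with the Hausdorffness of $H^{n,n}_{\ol D,cur}(X)$ supplied (for the smooth-boundary case) by Kohn's solvability of $\opa$ up to the boundary, i.e. $H^{0,1}_\infty(\ol D)=0$, combined with Serre duality. Your additional observation that for (ii)$\Rightarrow$(i) under merely Lipschitz boundary the Hausdorffness of $H^{n,n}_{\ol D,cur}(X)$ can be bootstrapped from the injectivity (or is in fact not needed in that direction of the argument) is a correct and slightly more careful reading of where each hypothesis enters than the paper's own two-line proof.
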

\begin{proof}
Since $X$ is Stein, we have $H^{n,n-1}_c(X)=0$ and $H^{n,n}_c(X)$ is Hausdorff. The domain $D$ being relatively compact, pseudoconvex, with smooth boundary in $X$,  we have $H^{0,1}_\infty(\ol D)=0$ by Kohn's classical result on solving $\opa$ smoothly up to the boundary in pseudoconvex domains with smooth boundary.
Then the  Serre duality implies that $H^{n,n}_{\ol D, cur}(X)$ is Hausdorff.
The corollary follows then from Theorem \ref{stronglyrunge} and Proposition \ref{strongcomplement}.
\end{proof}

\begin{cor}
Let $X$ be a Stein  manifold of complex dimension $n\geq 2$ and $D\subset\subset X$ a relatively compact domain with smooth boundary in $X$ such that $X\setminus D$ is connected. Then $D$ is pseudoconvex and $\ci$-Runge in $X$ if and only if $\check{H}^{n,q}_{\Phi}(X\setminus\ol D)=0$ for all $1\leq q\leq n-1$.
\end{cor}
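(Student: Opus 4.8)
The plan is to prove this corollary as the $\ci$ counterpart of Corollary~\ref{caracrunge}, assembling Corollary~\ref{mergelyan}, Proposition~\ref{strongcomplement} and Serre duality, in the same spirit in which Corollary~\ref{caracrunge} was deduced from Theorem~\ref{complement}, Proposition~\ref{pseudoconvexity} and Lemma~\ref{complementsupport}. I would keep in mind throughout that $X$ is Stein and non-compact, so $H^{n,q}_c(X)=0$ for $0\le q\le n-1$ while $H^{n,n}_c(X)$ is Hausdorff, and that $\partial D$ is smooth, hence Lipschitz, so that the Serre-duality dictionary of this section between $\ci_{0,\bullet}(\ol D)$ and the currents on $X$ of bidegree $(n,n-\bullet)$ supported in $\ol D$ is available (see \cite{LaShdualiteL2}): $H^{n,r}_{\ol D,cur}(X)$ and $H^{0,n-r}_\infty(\ol D)$ are in duality when Hausdorff, and $H^{n,r}_{\ol D,cur}(X)$ is Hausdorff if and only if $H^{0,n-r+1}_\infty(\ol D)$ is.

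First I would record an automatic vanishing coming from the connectedness of $X\setminus D$: if $T$ is a $\opa$-closed $(n,1)$-current on $X$ supported in $\ol D$, then, $X$ being Stein, $T=\opa S$ for an $(n,0)$-current $S$ with compact support in $X$; since $\opa S=0$ off $\ol D$, the restriction of $S$ to the connected non-compact manifold $X\setminus\ol D$ is a holomorphic $(n,0)$-form vanishing off a compact subset, hence identically zero there, so $S$ is supported in $\ol D$ and $H^{n,1}_{\ol D,cur}(X)=0$. Next, running Proposition~\ref{strongcomplement} with $p=0$ (its hypothesis $H^{n,n-q-1}_c(X)=0$ being automatic, and for $1\le q\le n-2$ the target $H^{n,q+1}_c(X)$ vanishing as well), one gets
$$\check{H}^{n,q}_\Phi(X\setminus\ol D)=0\ \Longleftrightarrow\ H^{n,q+1}_{\ol D,cur}(X)=0\qquad(1\le q\le n-2),$$
while $\check{H}^{n,n-1}_\Phi(X\setminus\ol D)=0$ is equivalent to the injectivity of $H^{n,n}_{\ol D,cur}(X)\to H^{n,n}_c(X)$, which in turn forces $H^{n,n}_{\ol D,cur}(X)$ to be Hausdorff (arguing verbatim as in the proof of the corollary following Corollary~\ref{rungeStein}: an $(n,n)$-current supported in $\ol D$ and orthogonal to $\oc(D)\cap\ci(\ol D)$ is orthogonal to $\oc(X)$, hence $\opa$-exact on the Stein $X$, hence $\opa$-exact with support in $\ol D$). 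Putting these facts together, the hypothesis $\check{H}^{n,q}_\Phi(X\setminus\ol D)=0$ for all $1\le q\le n-1$ is equivalent to: $H^{n,r}_{\ol D,cur}(X)=0$ for $1\le r\le n-1$ (the case $r=1$ being the automatic one) together with the Hausdorffness of $H^{n,n}_{\ol D,cur}(X)$. Through Serre duality these are exactly the conditions $H^{0,t}_\infty(\ol D)=0$ for $1\le t\le n-1$, which characterize pseudoconvexity of $D$ in the Stein manifold $X$ (Kohn's theorem on solving $\opa$ smoothly up to the boundary giving the implication from pseudoconvexity). Hence, under the standing hypotheses, $\check{H}^{n,q}_\Phi(X\setminus\ol D)=0$ for $1\le q\le n-1$ if and only if $D$ is pseudoconvex; and once $D$ is known to be pseudoconvex with smooth boundary in the Stein $X$, Corollary~\ref{mergelyan} identifies ``$D$ is $\ci$-Runge in $X$'' with $\check{H}^{n,n-1}_\Phi(X\setminus\ol D)=0$, which is the last of the conditions, closing the equivalence in both directions.

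The step I expect to be delicate is the precise Serre-duality bookkeeping that turns the vanishing of the current-cohomology groups $H^{n,r}_{\ol D,cur}(X)$ into the vanishing of $H^{0,t}_\infty(\ol D)$ over the \emph{full} range $1\le t\le n-1$: the index shift in the Hausdorffness statement, the automatic vanishing in degree $(n,1)$, and the Hausdorffness of $H^{n,n}_{\ol D,cur}(X)$ supplied by the top-degree injectivity must all be used in concert, and this plays here the role that Proposition~\ref{pseudoconvexity} plays in Section~\ref{srunge}. The low-dimensional case $n=2$, where the intermediate range $1\le q\le n-2$ is empty, should be singled out but is covered by the same argument: $\check{H}^{2,1}_\Phi(X\setminus\ol D)=0$ makes $H^{2,2}_{\ol D,cur}(X)$ Hausdorff, which together with the automatic $H^{2,1}_{\ol D,cur}(X)=0$ gives $H^{0,1}_\infty(\ol D)=0$, hence pseudoconvexity of $D$, and then $\ci$-Runge again via Corollary~\ref{mergelyan}.
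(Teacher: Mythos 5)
Your overall strategy (automatic vanishing of $H^{n,1}_{\ol D,cur}(X)$ from connectedness, Proposition \ref{strongcomplement} with $p=0$ to translate the $\check{H}^{n,q}_\Phi$ conditions into statements about $H^{n,q+1}_{\ol D,cur}(X)$, Serre duality to reach $H^{0,t}_\infty(\ol D)$, and Corollary \ref{mergelyan} for the Runge part) is sound and close in spirit to the paper's proof, which routes instead through $\check{H}^{n,q}(X\setminus D)$ and the results of \cite{FuLaSh}. But there is a genuine logical error in the middle of your write-up. You assert that the hypothesis ``$\check{H}^{n,q}_{\Phi}(X\setminus\ol D)=0$ for all $1\leq q\leq n-1$'' is \emph{equivalent} to ``$H^{n,r}_{\ol D,cur}(X)=0$ for $1\leq r\leq n-1$ together with the Hausdorffness of $H^{n,n}_{\ol D,cur}(X)$,'' and hence equivalent to pseudoconvexity of $D$. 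This is false in the backward direction: by your own application of Proposition \ref{strongcomplement}, the condition $\check{H}^{n,n-1}_{\Phi}(X\setminus\ol D)=0$ is equivalent to the \emph{injectivity} of $H^{n,n}_{\ol D,cur}(X)\to H^{n,n}_c(X)$, which is strictly stronger than Hausdorffness of $H^{n,n}_{\ol D,cur}(X)$; pseudoconvexity delivers only the latter. If your claimed equivalence held, every smooth pseudoconvex domain with connected complement in a Stein manifold would automatically satisfy $\check{H}^{n,n-1}_{\Phi}(X\setminus\ol D)=0$ and hence, by Corollary \ref{mergelyan}, be $\ci$-Runge --- contradicting the paper's example of the strictly pseudoconvex domain $\Omega\subset\cb^2$ (whose complement is connected) on which $1/z_2$ cannot be approximated uniformly by entire functions. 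As written, your proof therefore ``establishes'' a false strengthening of the corollary.

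The repair is exactly the separation your last sentence gestures at, and it should be made the actual logical skeleton: the conditions $\check{H}^{n,q}_{\Phi}(X\setminus\ol D)=0$ for $1\leq q\leq n-2$, \emph{together with the Hausdorffness of $H^{n,n}_{\ol D,cur}(X)$ that is merely a consequence of} the $q=n-1$ condition, are equivalent (via the automatic degree-$(n,1)$ vanishing and Serre duality) to $H^{0,t}_\infty(\ol D)=0$ for $1\leq t\leq n-1$, i.e.\ to pseudoconvexity; while the $q=n-1$ condition itself is equivalent, once pseudoconvexity is in hand, to the $\ci$-Runge property by Corollary \ref{mergelyan}. In the forward direction of the corollary one then uses Kohn's theorem to recover the intermediate vanishings and Hausdorffness from pseudoconvexity, and Corollary \ref{mergelyan} to recover $\check{H}^{n,n-1}_{\Phi}(X\setminus\ol D)=0$ from the $\ci$-Runge property; in the backward direction one extracts pseudoconvexity from the full list of vanishings and only afterwards feeds the $q=n-1$ vanishing back into Corollary \ref{mergelyan}. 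With the equivalence restated this way your argument goes through; as currently phrased, the middle step is wrong.
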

\begin{proof}
Following the arguments in the proof of Lemma \ref{complementsupport}, we can prove that if $\check{H}^{n,q}_{\Phi}(X\setminus\ol D)=0$ for all $\leq q\leq n-1$, then $\check{H}^{n,q}(X\setminus D)=0$, for any  $1\leq q\leq n-2$ and $\check{H}^{n,n-1}(X\setminus D)=0$ is Hausdorff.
Moreover, Theorem 3.11 in \cite{FuLaSh} implies that $H^{n,q}_{\ol D, cur}(X)=0$ if and only if $\check{H}^{n,q-1}(X\setminus D)=0$, for any  $1\leq q\leq n-1$. Proposition 3.7 in \cite{FuLaSh} implies that if $\check{H}^{n,n-1}(X\setminus D)=0$, then $H^{n,n}_{\ol D, cur}(X)$ is Hausdorff.
Therefore the corollary follows from Corollary 3.12, Theorem 3.13 in \cite{FuLaSh} and Corollary \ref{mergelyan}.
\end{proof}

\begin{defin}
A relatively compact domain $D$ in $X$ has the \emph{$\ci$ $q$-Mergelyan property}, for $0\leq q\leq n-1$, if and only if, for any $0\leq p\leq n$, any form in the space $Z^{p,q}_\infty(\ol D)$ of smooth $\opa$-closed $(p,q)$-forms in $\ol D$ can be approximated, for the smooth topology  on the closure of $D$, by smooth $\opa$-closed forms defined on a neighborhood of $\ol D$.

If $p=q=0$, this means the space $\oc(\ol D)$ of germs of holomorphic functions on $\ol D$ is dense in the space $\oc(D)\cap\ci(\ol D)$ of holomorphic functions on $D$, which are smooth on $\ol D$, for the smooth topology on the closure of $\ol D$. In that case we will say that $D$ has the \emph{$\ci$-Mergelyan property}.
\end{defin}

Note that, for a relatively compact domain $D$ in a complex manifold $X$ to have the $\ci$ $q$-Mergelyan property, it is sufficient that $D$ admits a neighborhood $V$ such that $D$ is $\ci$ $q$-Runge in $V$. So it comes from Corollary \ref{strongextension} that

\begin{prop}\label{spseudoconvex}
Let $D$ be a relatively compact domain in a complex manifold $X$ of complex dimension $n\geq 2$. Assume $D$ has smooth boundary and $\ol D$ admits a neighborhood $V$ which is a $(q+1)$-convex extension of $\ol D$, then $D$ has the $\ci$ $q$-Mergelyan property.
\end{prop}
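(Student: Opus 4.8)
The plan is to reduce this to the observation already recorded just before the statement: for a relatively compact domain $D$ in a complex manifold $X$, to have the $\ci$ $q$-Mergelyan property it suffices to produce a neighborhood $V$ of $\ol D$ in which $D$ is $\ci$ $q$-Runge. Indeed, if $D$ is $\ci$ $q$-Runge in $V$, then any $\opa$-closed $(p,q)$-form smooth on $\ol D$ is, by definition of $\ci$ $q$-Runge, a limit in the $\ci(\ol D)$ topology of $\opa$-closed smooth $(p,q)$-forms on $V$; since $V$ is itself a neighborhood of $\ol D$, these approximants are exactly the kind of forms (smooth and $\opa$-closed on a neighborhood of $\ol D$) required in the definition of the $\ci$ $q$-Mergelyan property. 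So the whole proposition comes down to checking that the hypotheses give $D$ the $\ci$ $q$-Runge property inside $V$.

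First I would invoke Corollary \ref{strongextension} with the ambient manifold taken to be $V$ rather than $X$. That corollary asserts: if $V$ is a non-compact complex manifold of dimension $n\geq 2$, $D\subset\subset V$ has smooth boundary, $0\leq q\leq n-2$, and $V$ is a $(q+1)$-convex extension of $\ol D$, then $D$ is $\ci$ $q$-Runge in $V$. Here all the hypotheses are verbatim what is assumed in Proposition \ref{spseudoconvex}: $D$ has smooth boundary, $\ol D$ admits a neighborhood $V$ which is a $(q+1)$-convex extension of $\ol D$, and $V$ is automatically non-compact since it is a relatively compact neighborhood of $\ol D$ (it sits inside $X$ with compact closure, so it cannot be compact as a manifold unless it is empty). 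Thus Corollary \ref{strongextension} applies directly and yields that $D$ is $\ci$ $q$-Runge in $V$.

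Combining the two steps finishes the argument: $D$ is $\ci$ $q$-Runge in the neighborhood $V$ of $\ol D$, hence by the sufficiency remark above $D$ has the $\ci$ $q$-Mergelyan property. The argument is essentially a bookkeeping reduction, so there is no serious obstacle; the only point requiring a moment's care is making sure the range of $q$ and the smoothness/non-compactness hypotheses in Corollary \ref{strongextension} are all met by the localized data $(V,D)$, which they are. One should also note that the statement of Proposition \ref{spseudoconvex} implicitly restricts to $0\leq q\leq n-2$ (inherited from Corollary \ref{strongextension}); for the remaining degree $q=n-1$ the $\ci$ $(n-1)$-Mergelyan property would need a separate — and in fact trivial — treatment, but this is outside what the proposition asserts.
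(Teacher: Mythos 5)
Your argument is exactly the paper's: the authors also note that it suffices for $D$ to be $\ci$ $q$-Runge in some neighborhood $V$ of $\ol D$, and then apply Corollary \ref{strongextension} with $V$ as the ambient manifold. Your additional remarks on the non-compactness of $V$ and the implicit restriction $0\leq q\leq n-2$ are correct points of care but do not change the route.
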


If $q=0$, Proposition \ref{spseudoconvex} could be compare to Theorem 24 in the survey paper \cite{FoForWo}, where the authors study the $\cc^k$-Mergelyan property. They assume the domain $D$ to be strictly pseudoconvex in a Stein manifold. Here we only need the domain $D$ to be extendable in a $1$-convex way to some $1$-convex neighborhood.
\medskip

To end this section, let us relate the $\ci$-Mergelyan property with the solvability of the $\opa$-equation with prescribed support.

\begin{thm}\label{mergelyanprop}
Let $X$ be a complex manifold of complex dimension $n\geq 1$, $D\subset\subset X$ a relatively compact domain with Lipschitz boundary in $X$. Assume  $\ol D$ admits a neighborhood basis of $1$-convex open subsets. Assume that for all $(n,n)$-currents $T$ with support in $\ol D$ such that, for any sufficiently small neighborhood $V$ of $\ol D$, $T=\opa S_V$ for some $(n,n-1)$-current $S_V$ with compact support in $V$, there exists an $(n,n-1)$-current $S$ with support in $\ol D$ satisfying $T=\opa S$, then $D$ has the $\ci$-Mergelyan property.
\end{thm}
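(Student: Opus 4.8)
The plan is to use the Hahn--Banach theorem in the same spirit as in the proofs of Theorem \ref{stronglyrunge} and Corollary \ref{mergelyan}, converting the approximation statement into a duality statement about $(n,n)$-currents supported in $\ol D$. Since $D$ has Lipschitz boundary, the space $\ci_{0,0}(\ol D)$ of smooth functions on $\ol D$ and the space ${\ec'}^{n,n}_{\ol D}(X)$ of $(n,n)$-currents with support in $\ol D$ are dual to each other. Thus, to prove that $\oc(\ol D)$ is dense in $\oc(D)\cap\ci(\ol D)$ for the $\ci(\ol D)$-topology, it suffices to show: if $T\in{\ec'}^{n,n}_{\ol D}(X)$ satisfies $\langle T,f\rangle=0$ for every germ $f\in\oc(\ol D)$, then $\langle T,\varphi\rangle=0$ for every $\varphi\in\oc(D)\cap\ci(\ol D)$.

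First I would analyze the annihilator condition. Fix such a $T$ and let $V$ be a small $1$-convex neighborhood of $\ol D$ taken from the given neighborhood basis. The functional $f\mapsto\langle T,f\rangle$ vanishes on $\oc(V)$; since $V$ is $1$-convex, $H^{n,n}_c(V)$ is Hausdorff (Andreotti--Grauert), so $T$, viewed as an $(n,n)$-current with compact support in $V$, is $\opa$-exact there: $T=\opa S_V$ for some $(n,n-1)$-current $S_V$ with compact support in $V$. As $V$ ranges over the neighborhood basis we obtain exactly the hypothesis of the theorem, hence there is an $(n,n-1)$-current $S$ with support in $\ol D$ with $T=\opa S$.

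With this $S$ in hand the conclusion is immediate: for any $\varphi\in\oc(D)\cap\ci(\ol D)$, regarded as an element of $\ci_{0,0}(\ol D)$ paired against ${\ec'}^{n,n}_{\ol D}(X)$, we have
$$\langle T,\varphi\rangle=\langle\opa S,\varphi\rangle=\pm\langle S,\opa\varphi\rangle=0,$$
since $\opa\varphi=0$ on $D$ and $\supp S\subset\ol D$; the pairing $\langle S,\opa\varphi\rangle$ makes sense because $\opa\varphi\in\ci_{0,1}(\ol D)$ is tested against the $(n,n-1)$-current $S$ supported in $\ol D$, using again the Lipschitz boundary duality (lemma 2.4 and the remarks preceding it in \cite{LaShdualiteL2}). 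By Hahn--Banach, $\oc(\ol D)$ is dense in $\oc(D)\cap\ci(\ol D)$, i.e. $D$ has the $\ci$-Mergelyan property.

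The main obstacle is a bookkeeping one rather than a deep one: one must be careful that the duality pairings are legitimate at each step, namely that $S$, produced a priori only as a current on $X$ with support in $\ol D$, pairs with the smooth-up-to-$\ol D$ forms $\varphi$ and $\opa\varphi$ — this is precisely why the Lipschitz boundary hypothesis is imposed and why the hypothesis on $T$ is phrased with $S_V$ having compact support in the shrinking neighborhoods $V$, so that the $1$-convexity of the $V$'s can be invoked to manufacture the $S_V$ in the first place. Everything else is a direct transcription of the Hahn--Banach/Serre-duality scheme already used repeatedly above.
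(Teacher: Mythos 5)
Your proposal is correct and follows essentially the same route as the paper's proof: reduce to a Hahn--Banach/duality statement for $(n,n)$-currents supported in $\ol D$, use the Hausdorffness of $H^{n,n}_c(V)$ on the $1$-convex neighborhoods $V$ to produce the compactly supported primitives $S_V$, invoke the hypothesis to get $S$ supported in $\ol D$, and conclude by integration by parts against $\opa\varphi$. The only difference is that you spell out the Lipschitz-boundary duality pairings a bit more explicitly, which is a useful but not essential addition.
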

\begin{proof}
Assume the cohomological condition holds. Let $T$ be an $(n,n)$-current with support in $\ol D$ such that $<T,f>=0$ for any $f\in\oc(\ol D)$. For any $1$-convex neighborhood $V$ of $\ol D$, $H^{n,n}_c(V)$ is Hausdorff. Hence there exists an $(n,n-1)$-current $S_V$ with compact support in $V$ such that $T=\opa S_V$. Using the hypothesis, we get that $T=\opa S$ for some $(n,n-1)$-current $S$ with support in $\ol D$. Let $g\in\oc(D)\cap\ci(\ol D)$, then
$$<T,g>=<\opa S,g>=\pm <S,\opa g>=0.$$
We apply now the Hahn-Banach theorem to get the density property.
\end{proof}

Conversely we get the next theorem.

\begin{thm}\label{mergelyancompact}
Let $X$ be a complex manifold of complex dimension $n\geq 1$, $D\subset\subset X$ a relatively compact domain with Lipschitz boundary in $X$. Assume $H^{n,n}_{\ol D,cur}(X)$ is Hausdorff and $D$ has the $\ci$-Mergelyan property, then if  $T$ is an $(n,n)$-current with support in $\ol D$ such that, for any neighborhood $V$ of $\ol D$, $T=\opa S_V$ for some $(n,n-1)$-current $S_V$ with compact support in $V$, there exists an $(n,n-1)$-current $S$ with support in $\ol D$ satisfying $T=\opa S$.
\end{thm}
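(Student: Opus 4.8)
The plan is to run the argument of Theorem \ref{mergelyanprop} in reverse, using Serre duality to convert the $\ci$-Mergelyan density statement into the surjectivity (onto the appropriate subspace) of $\opa$ acting on currents supported in $\ol D$. First I would set up the duality: since $D$ has Lipschitz boundary, the space $\ci_{0,0}(\ol D)$ and the space $(\ec')^{n,n}_{\ol D}(X)$ of $(n,n)$-currents supported in $\ol D$ are dual to each other, and the corresponding $\opa$-complexes are dual; moreover $H^{n,n}_{\ol D,cur}(X)$ being Hausdorff means that a current $T\in(\ec')^{n,n}_{\ol D}(X)$ is $\opa$-exact within currents supported in $\ol D$ if and only if $\langle T,g\rangle=0$ for every $g\in Z^{0,0}_\infty(\ol D)=\oc(D)\cap\ci(\ol D)$ (the annihilator of the $\opa$-closed forms equals the closure of the image of $\opa$, which by Hausdorffness is the image itself).

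Next I would take $T$ an $(n,n)$-current with support in $\ol D$ satisfying the hypothesis, namely that for every neighborhood $V$ of $\ol D$ there is an $(n,n-1)$-current $S_V$ with compact support in $V$ with $T=\opa S_V$. I claim $\langle T,g\rangle=0$ for every $g\in\oc(D)\cap\ci(\ol D)$. Indeed, fix such a $g$. By the $\ci$-Mergelyan property there is a sequence $(g_k)$ of holomorphic functions, each defined on some neighborhood of $\ol D$, converging to $g$ in the smooth topology on $\ol D$. For each $k$, choose a neighborhood $V_k$ of $\ol D$ on which $g_k$ is holomorphic, and apply the hypothesis to get $S_{V_k}$ with compact support in $V_k$ and $T=\opa S_{V_k}$; then
\[
\langle T,g_k\rangle=\langle\opa S_{V_k},g_k\rangle=\pm\langle S_{V_k},\opa g_k\rangle=0
\]
since $g_k$ is holomorphic on $V_k\supset\supp S_{V_k}$. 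Passing to the limit, $\langle T,g\rangle=\lim_k\langle T,g_k\rangle=0$. (One should note $T$ is a current of finite order, so pairing against the smoothly convergent sequence $g_k$ is legitimate.)

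Finally, since $H^{n,n}_{\ol D,cur}(X)$ is Hausdorff and $T$ annihilates $Z^{0,0}_\infty(\ol D)$, Serre duality yields an $(n,n-1)$-current $S$ with support in $\ol D$ such that $T=\opa S$, which is the desired conclusion. The main obstacle I anticipate is making the duality step fully rigorous: one must be careful that "$H^{n,n}_{\ol D,cur}(X)$ Hausdorff'' really does give that the image of $\opa$ on $(\ec')^{n,n-1}_{\ol D}(X)$ is weakly closed and hence equals the annihilator of $\ke\opa$ in $\ci_{0,0}(\ol D)$ — this is exactly the content of the Serre-type duality set up at the start of Section 2 (the duality between the $\opa$-complex of currents extendable/supported in $\ol D$ and the $\opa$-complex of smooth forms on $\ol D$), so invoking that framework together with the Hausdorff hypothesis closes the argument. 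The rest is the same Hahn–Banach/approximation bookkeeping as in Theorem \ref{mergelyanprop}, read backwards.
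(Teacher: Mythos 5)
Your proposal is correct and follows essentially the same route as the paper's proof: show $\langle T,g\rangle=0$ for all $g\in\oc(D)\cap\ci(\ol D)$ by approximating $g$ (using the $\ci$-Mergelyan property and the finite order of $T$) with functions holomorphic on neighborhoods $V$ of $\ol D$ where $T=\opa S_V$ kills them, then invoke the Hausdorffness of $H^{n,n}_{\ol D,cur}(X)$ and the Serre-type duality from the Lipschitz boundary to produce $S$ supported in $\ol D$. The only cosmetic difference is that you phrase the approximation as a sequence passing to the limit while the paper runs the same estimate with an $\varepsilon$ and the triangle inequality.
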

\begin{proof}
Assume $D$ has the $\ci$-Mergelyan property, then for any $g\in\oc(D)\cap\ci(\ol D)$, any $\varepsilon>0$ and any integer $N$, there exists a neighborhood $W$ of $\ol D$ and a function $f_N\in\oc(W)$ such that $\|g-f_N\|_{\cc^N(\ol D)}<\varepsilon$.

Let $T$ be an $(n,n)$-current with support in $\ol D$ such that, for any neighborhood $V$ of $\ol D$, there exists an $(n,n-1)$-current $S_V$ with compact support in $V$ satisfying $T=\opa S_V$. Since $T$ has compact support, it is of finite order $N$. Let $g\in\oc(D)\cap\ci(\ol D)$, the density hypothesis implies that for any $\varepsilon>0$, there exists $f\in\oc(V)$ for some neighborhood $V$ of $\ol D$ such that $\|g-f\|_{\cc^N(\ol D)}<\varepsilon$.

 Therefore
$$|<T,g>|\leq |<T,g-f>|+|<T,f>|\leq C\varepsilon+|<\opa S_V,f>|\leq C\varepsilon+|<S_V,\opa f>|\leq C\varepsilon.$$
So for any $g\in\oc(D)\cap\ci(\ol D)$, $<T,g>=0$ and since $H^{n,n}_{\ol D,cur}(X)$ is Hausdorff, we get $T=\opa S$ for some $(n,n-1)$-current $S$ with support in $\ol D$.
\end{proof}

\begin{cor}
Let $X$ be a complex manifold of complex dimension $n\geq 1$, $D\subset\subset X$ a relatively compact {\color{red} domain} with smooth boundary in $X$.  Assume $\ol D$ admits a neighborhood $V$ which is a $1$-convex extension of $\ol D$, then if  $T$ is an $(n,n)$-current with support in $\ol D$ such that, for any neighborhood $V$ of $\ol D$, $T=\opa S_V$ for some $(n,n-1)$-current $S_V$ with compact support in $V$, there exists an $(n,n-1)$-current $S$ with support in $\ol D$ satisfying $T=\opa S$.
\end{cor}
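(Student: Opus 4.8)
The plan is to reduce the statement to Theorem~\ref{mergelyancompact}, whose conclusion is precisely the assertion we want. Since $D$ has smooth (hence Lipschitz) boundary, it suffices to verify the two remaining hypotheses of that theorem: that $D$ has the $\ci$-Mergelyan property, and that $H^{n,n}_{\ol D,cur}(X)$ is Hausdorff.

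For the $\ci$-Mergelyan property I would simply invoke Proposition~\ref{spseudoconvex} with $q=0$: the assumption that $\ol D$ admits a neighborhood $V$ which is a $1$-convex (that is, $(q+1)$-convex for $q=0$) extension of $\ol D$ is exactly what that proposition requires, so $D$ has the $\ci$ $0$-Mergelyan property, i.e. the $\ci$-Mergelyan property. (Equivalently one may apply Corollary~\ref{strongextension} with $V$ in the role of the ambient manifold to get that $D$ is $\ci$-Runge in $V$, and then use that $V$ is a neighborhood of $\ol D$; for $n=1$ the $\ci$-Mergelyan property of a smoothly bounded domain in a Riemann surface is classical.)

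For the Hausdorff property I would first unwind the definition of a $1$-convex extension: there is a $\cc^2$ function $\rho$ on a neighborhood $U$ of $X\setminus D$ whose Levi form has at least $n$ positive eigenvalues, i.e. $\rho$ is strictly plurisubharmonic on $U$, with $D\cap U=\{z\in U\mid\rho(z)<0\}$. As $D$ has smooth boundary, this forces $\partial D$ to be a smooth strictly pseudoconvex hypersurface, so $D$ is a relatively compact strongly pseudoconvex domain with smooth boundary. Kohn's theorem on solving $\opa$ smoothly up to the boundary then gives $H^{0,1}_\infty(\ol D)=0$, and the Serre duality between the complexes $\ci_{p,q}(\ol D)$ and ${\ec'}^{n-p,n-q}_{\ol D}(X)$ recalled at the beginning of this section yields, exactly as in the proof of Corollary~\ref{mergelyan}, that $H^{n,n}_{\ol D,cur}(X)$ is Hausdorff.

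Both hypotheses of Theorem~\ref{mergelyancompact} being met, its conclusion gives the corollary. The only point requiring a little care is the passage from the abstract ``$1$-convex extension'' hypothesis to the strict pseudoconvexity of $\partial D$, and thence to the Hausdorff property of $H^{n,n}_{\ol D,cur}(X)$; once that is established everything else is a direct citation of the results above.
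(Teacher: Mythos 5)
Your overall reduction --- smooth boundary gives Lipschitz boundary, Proposition \ref{spseudoconvex} with $q=0$ gives the $\ci$-Mergelyan property, and a Hausdorff property for $H^{n,n}_{\ol D,cur}(X)$ lets you invoke Theorem \ref{mergelyancompact} --- is exactly the derivation the paper intends (it states this corollary without proof, immediately after that theorem). One step is wrong as written, though. The corollary allows $X$ to be an arbitrary complex manifold, and for a relatively compact strictly pseudoconvex domain with smooth boundary in an arbitrary hermitian manifold Kohn's theory gives only that $H^{0,1}_\infty(\ol D)$ is \emph{finite dimensional}, not that it vanishes: a strictly pseudoconvex disc-bundle neighborhood of the zero section of a negative line bundle over a compact curve of positive genus already has $H^{0,1}\neq 0$. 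The vanishing you quote is the one used in the proof of Corollary \ref{mergelyan}, but there $X$ is Stein, so $D$ is itself Stein. Fortunately the conclusion you actually need survives: finite dimensionality of $H^{0,1}_\infty(\ol D)$ already forces $\opa\colon\ci(\ol D)\to\ci_{0,1}(\ol D)$ to have closed range, and the duality between $\ci_{p,q}(\ol D)$ and ${\ec'}^{n-p,n-q}_{\ol D}(X)$ recalled at the start of the section then yields that $H^{n,n}_{\ol D,cur}(X)$ is Hausdorff. So replace ``$H^{0,1}_\infty(\ol D)=0$'' by ``$H^{0,1}_\infty(\ol D)$ is finite dimensional, hence the relevant range is closed'' and the argument goes through.

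A smaller point: Proposition \ref{spseudoconvex} (resting on Corollary \ref{strongextension}) is stated only for $n\geq 2$, while the corollary claims $n\geq 1$. Your parenthetical appeal to the classical one-variable approximation of functions in $\oc(D)\cap\ci(\ol D)$ by germs of holomorphic functions on $\ol D$ is the right way to cover $n=1$, but it is genuinely outside the cited proposition and should be stated as a separate (classical) input rather than folded into the same citation.
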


\subsection{The $L^2$ Runge density property}

\begin{defin}
A relatively compact domain $D$ with Lipschitz boundary in $X$ is \emph{$L^2$ $q$-Runge}, for $0\leq q\leq n-1$, if and only if, for any $0\leq p\leq n$, the space $Z^{p,q}_{L^2_{loc}}(X)$ of $L^2_{loc}$ $\opa$-closed $(p,q)$-forms in $X$ is dense in the space $Z^{p,q}_{L^2}(D)$ of  $L^2$ $\opa$-closed $(p,q)$-forms in $D$ for the $L^2$ topology on $D$.

For $q=0$, we will simply say that the domain  is \emph{$L^2$ Runge}, which means that the space $\oc(X)$ of holomorphic functions in $X$ is dense in the space $H^2(D)=\oc(D)\cap L^2(D)$ of $L^2$ holomorphic functions in $D$, for the $L^2$ topology on $D$.
\end{defin}

If $D\subset\subset X$ is a relatively compact domain in $X$, we denote by $H^{r,s}_{\ol D, L^2}(X)$ the  Dolbeault cohomology groups of $L^2$ forms with prescribed support in $\ol D$ and by ${H}^{r,s}_{\Phi, W^1}(X\setminus D)$ the Dolbeault cohomology groups of $W^1$ forms in $X\setminus D$ vanishing outside a compact subset of $X$.

\begin{thm}\label{L2runge}
Let $X$ be a non-compact complex manifold of complex dimension $n\geq 1$, $D\subset\subset X$ a relatively compact domain with Lipschitz boundary in $X$ and $q$ be a fixed integer such that $0\leq q\leq n-1$. Assume  that, for any $0\leq p\leq n$, $H^{n-p,n-q}_c(X)$ and  $H^{n-p,n-q}_{\ol D,L^2}(X)$ are Hausdorff. Then $D$ is an $L^2$ $q$-Runge domain in $X$ if and only if the natural map
$H^{n-p,n-q}_{\ol D,L^2}(X)\to H^{n-p,n-q}_c(X)$
is injective.
\end{thm}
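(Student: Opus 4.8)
The plan is to mimic the proof of Theorem~\ref{runge}, replacing the $\ci$-topology pairing by the $L^2$-duality between the complex $(L^2_{p,q}(D),\opa)$ and the complex $(L^2_{n-p,n-q}(X),\opa_{\wt c})$ of $L^2$-forms with support in $\ol D$, which is available because $D$ has Lipschitz boundary (Lemma~2.4 in \cite{LaShdualiteL2}, quoted above). Concretely: suppose first that $D$ is $L^2$ $q$-Runge, and let $T$ be an $L^2_{n-p,n-q}$-form with support in $\ol D$ representing a class in $H^{n-p,n-q}_{\ol D,L^2}(X)$ that maps to $0$ in $H^{n-p,n-q}_c(X)$; thus $T=\opa S$ for some $S\in\dc^{n-p,n-q-1}(X)$ (a smooth, hence $L^2_{loc}$, compactly supported form — one may also phrase this with currents and then regularize, but the Hausdorff hypothesis on $H^{n-p,n-q}_c(X)$ lets us stay in $L^2$). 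Since $H^{n-p,n-q}_{\ol D,L^2}(X)$ is Hausdorff, by Serre duality $[T]=0$ in that group iff $\int_X \varphi\wedge T=0$ for every $\varphi\in Z^{p,q}_{L^2}(D)$ (extended by the duality pairing). Given such a $\varphi$, the $L^2$ $q$-Runge property gives a sequence $\varphi_k\in Z^{p,q}_{L^2_{loc}}(X)$ with $\varphi_k\to\varphi$ in $L^2(D)$; then
$$\int_X \varphi\wedge T=\lim_{k\to\infty}\int_X \varphi_k\wedge T=\lim_{k\to\infty}\int_X \varphi_k\wedge\opa S=\pm\lim_{k\to\infty}\int_X \opa\varphi_k\wedge S=0,$$
the integrals being legitimate because $S$ has compact support and $\varphi_k$ is $L^2_{loc}$ with $\opa\varphi_k=0$. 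Hence $[T]=0$ in $H^{n-p,n-q}_{\ol D,L^2}(X)$, giving injectivity of the natural map.

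For the converse, assume the natural map $H^{n-p,n-q}_{\ol D,L^2}(X)\to H^{n-p,n-q}_c(X)$ is injective. By the Hahn--Banach theorem applied in the $L^2$ setting, to prove that $Z^{p,q}_{L^2_{loc}}(X)$ is dense in $Z^{p,q}_{L^2}(D)$ it suffices to show that every continuous linear functional on $L^2_{p,q}(D)$ that annihilates the image of $Z^{p,q}_{L^2_{loc}}(X)$ also annihilates $Z^{p,q}_{L^2}(D)$; by the Lipschitz-boundary duality such a functional is represented by an $L^2_{n-p,n-q}$-form $T$ with support in $\ol D$ such that $\langle T,f\rangle=0$ for all $f\in Z^{p,q}_{L^2}(X)$ restricted to $D$, i.e.\ for all $f\in Z^{p,q}_{L^2_{loc}}(X)$. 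Since $H^{n-p,n-q}_c(X)$ is Hausdorff, the vanishing of $\langle T,\cdot\rangle$ on $\opa$-closed forms forces $T=\opa S$ for some $S\in L^2_{n-p,n-q-1}$ with compact support in $X$. The injectivity hypothesis then yields $S'\in L^2_{n-p,n-q-1}$ with support in $\ol D$ and $\opa S'=T$. Therefore, for any $g\in Z^{p,q}_{L^2}(D)$,
$$\langle T,g\rangle=\langle\opa S',g\rangle=\pm\langle S',\opa g\rangle=0,$$
using $\opa g=0$ and the duality between the $\opa$-complex on $D$ and the $\opa_{\wt c}$-complex with support in $\ol D$. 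This shows the functional vanishes on $Z^{p,q}_{L^2}(D)$, and the density follows.

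The main obstacle — and the reason the statement is phrased with the two Hausdorff hypotheses — is the passage between the "cohomology vanishes" and "orthogonal to all $\opa$-closed forms" formulations, which is exactly where Hausdorffness of $H^{n-p,n-q}_c(X)$ and of $H^{n-p,n-q}_{\ol D,L^2}(X)$ together with Serre duality in the $L^2$/Lipschitz setting are used; the rest is a formal transcription of the $\ci$-argument of Theorem~\ref{runge}. A secondary technical point to check carefully is that the $L^2$-pairings in the displayed chains of equalities are well defined: one needs $S$ (resp.\ $S'$) to have the stated support/compactness so that integration against an $L^2_{loc}$ form makes sense, and one needs the integration-by-parts / adjointness $\langle\opa S',g\rangle=\pm\langle S',\opa g\rangle$ to be exactly the duality of the two complexes from \cite{LaShdualiteL2}, not a naive boundary-term computation. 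Once these are in place the proof is complete.
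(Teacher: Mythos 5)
Your proposal is correct and follows essentially the same route as the paper: the forward direction pairs the $L^2$ class against approximating $\opa$-closed forms using the Hausdorffness of $H^{n-p,n-q}_{\ol D,L^2}(X)$ and Serre duality, and the converse is the Hahn--Banach argument where Hausdorffness of $H^{n-p,n-q}_c(X)$ produces a compactly supported primitive and injectivity replaces it by one supported in $\ol D$, i.e.\ a $\opa_{\wt c}$-primitive, so that the adjointness $\int_D\opa_{\wt c}\theta\wedge g=\pm\int_D\theta\wedge\opa g$ (valid by the Lipschitz-boundary duality of the two complexes) kills the pairing. The technical points you flag at the end are exactly the ones the paper relies on, so the argument is complete.
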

\begin{proof}
Assume $D$ is $L^2$ Runge in $X$ and let $f\in L^2_{n-p,n-q}(X)$ with support contained in $\ol D$ be such that the cohomological class $[f]$ of $f$ vanishes in $H^{n-p,n-q}_c(X)$ , which means that there exists $g\in L^2_{n-p,n-q-1}(X)$ with compact support in $X$ such that $f=\opa g$. Since $H^{n-p,n-q}_{\ol D,L^2}(X)$ is Hausdorff, then $[f]=0$ in $H^{n-p,n-q}_{\ol D,L^2}(X)$ if and only if, for any form  $\varphi\in Z^{p,q}_{L^2}(D)$, we have $\int_D \varphi\wedge f=0$. But, as $D$ is $L^2$ $q$-Runge in $X$, there exists a sequence $(\varphi_k)_{k\in\nb}$ of $L^2_{loc}$ $\opa$-closed $(p,q)$-forms in $X$  which converges to $\varphi$ in $L^2(D)$. So
$$\int_D \varphi\wedge f=\lim_{k\to\infty}\int_X \varphi_k\wedge f=\lim_{k\to\infty}\int_X \varphi_k\wedge \opa g=\pm \lim_{k\to\infty}\int_X \opa\varphi_k\wedge g=0.$$

Conversely, by the Hahn-Banach theorem, it is sufficient to prove that, for any form $g\in Z^{p,q}_{L^2}(D)$ and any  $(n-p,n-q)$-form $\varphi$ in $L^2(D)$ such that $\int_D \varphi\wedge f=0$ for any form $f\in Z^{p,q}_{L^2_{loc}}(X)$, we have $\int_D \varphi\wedge g=0$. We still denote by $\varphi$ the extension of $\varphi$ as an $L^2$ form on $X$ with compact support in $\ol D$. Since $H^{n-p,n-q}_c(X)$ is Hausdorff, the hypothesis on $\varphi$ implies that there exists an $L^2$ $(n-p,n-q-1)$-form $\psi$ with compact support in $X$ such that $\varphi=\opa \psi$. The injectivity of the natural map
$H^{n-p,n-q}_{\ol D,L^2}(X)\to H^{n-p,n-q}_c(X)$ implies that there exists an $L^2$ $(n-p,n-q-1)$-form $\theta$ with compact support in $\ol D$ such that $\varphi=\opa_{\wt c}\theta$. Hence since the boundary of $D$ is Lipschitz, for any $g\in Z^{p,q}_{L^2}(D)$, we get
$$\int_D \varphi\wedge g=\int_D \opa_{\wt c}\theta\wedge g=\pm\int_D \theta\wedge \opa g=0.$$
\end{proof}

\begin{prop}\label{L2complement}
Let $X$ be a non-compact complex manifold of complex dimension $n\geq 2$, $D\subset\subset X$ a relatively compact domain in $X$ with Lipschitz boundary and $q$ be a fixed integer such that $0\leq q\leq n-2$. Assume  that, for some $0\leq p\leq n$,  $H^{n-p,n-q-1}_c(X)=0$. Then ${H}^{n-p,n-q-1}_{\Phi, W^1}(X\setminus D)=0$ if and only if the natural map
$H^{n-p,n-q}_{\ol D,L^2}(X)\to H^{n-p,n-q}_c(X)$
is injective.
\end{prop}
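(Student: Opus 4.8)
The plan is to mirror, in the $L^2$ / $W^1$ setting, the argument used to prove Proposition \ref{strongcomplement} for currents, replacing the space of extendable currents vanishing outside a compact set by the space of $W^1$ forms with the analogous support behaviour, and replacing the pair $\opa$ / $\opa_{\wt c}$ of closed densely defined operators by their $L^2$ analogues, whose duality under Serre duality is guaranteed by the Lipschitz boundary hypothesis (as recalled in the preamble to this section). Throughout, the hypothesis $H^{n-p,n-q-1}_c(X)=0$ will be invoked only once, exactly as in Proposition \ref{strongcomplement}, to pass from a compactly supported $\opa$-closed $L^2$ form to a compactly supported primitive; the remark following Proposition \ref{strongcomplement} (that this hypothesis is needed for only one implication) will hold verbatim here as well.

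First I would prove the necessary condition. Let $T\in L^2_{n-p,n-q}(X)$ with support contained in $\ol D$ be such that $T=\opa_{\wt c}S$ for some $S\in L^2_{n-p,n-q-1}(X)$ with support in $\ol D$ (so $[T]=0$ in $H^{n-p,n-q}_{\ol D,L^2}(X)$); more directly, suppose $T$ represents a class mapping to $0$ in $H^{n-p,n-q}_c(X)$, i.e. $T=\opa S$ with $S$ a compactly supported $L^2$ form. Since $\supp T\subset\ol D$, the form $S$ is $\opa$-closed on $X\setminus D$, vanishes outside a compact subset of $X$, and is $W^1$ there (or: one first regularizes $S$ away from $\ol D$, using ellipticity of $\opa$ on a neighbourhood of $X\setminus D$, to arrange the $W^1$ regularity that the definition of $H^{n-p,n-q-1}_{\Phi,W^1}$ requires). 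By the vanishing ${H}^{n-p,n-q-1}_{\Phi, W^1}(X\setminus D)=0$ there is a $W^1$ form $U$ on a neighbourhood $X\setminus\ol D$ — vanishing outside a compact subset of $X$ — with $\opa U=S$ there. Extending $U$ to a form $\wt U$ on $X$ with control of supports and setting $R=S-\opa\wt U$, one gets a form with $\supp R\subset\ol D$ and $\opa R=T$, which exhibits the injectivity of $H^{n-p,n-q}_{\ol D,L^2}(X)\to H^{n-p,n-q}_c(X)$.

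Conversely, let $S$ be a $\opa$-closed $W^1$ form on $X\setminus D$ vanishing outside a compact subset of $X$, with support in $\Phi$. Extend $S$ to $\wt S$ on $X$ (again with compact support), so that $T:=\opa\wt S$ is an $L^2$ form supported in $\ol D$, representing a class in $H^{n-p,n-q}_{\ol D,L^2}(X)$ which maps to $0$ in $H^{n-p,n-q}_c(X)$. By the assumed injectivity there is $U\in L^2_{n-p,n-q-1}(X)$ with $\supp U\subset\ol D$ and $\opa_{\wt c}U=T$. Put $R=\wt S-U$: this is a compactly supported $\opa$-closed $L^2$ form on $X$ with $R=S$ on $X\setminus D$. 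Using $H^{n-p,n-q-1}_c(X)=0$, write $R=\opa W$ with $W$ compactly supported in $X$; then $S=R|_{X\setminus\ol D}=\opa W|_{X\setminus\ol D}$, after cutting $W$ off near $\ol D$ to land in the correct support family, giving ${H}^{n-p,n-q-1}_{\Phi, W^1}(X\setminus D)=0$.

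The routine parts are the cut-off and extension manipulations with $L^2$ and $W^1$ forms; the main obstacle — and the point deserving genuine care rather than a reference back to Proposition \ref{strongcomplement} — is the regularity bookkeeping: one must make sure that the primitive produced on $X\setminus D$ in the first implication is genuinely $W^1$ (not merely $L^2$), which uses interior elliptic regularity for $\opa$ on the open set $X\setminus\ol D$ where the data is $\opa$-closed, and conversely that the restriction to $X\setminus\ol D$ of a compactly supported $L^2$ primitive is $W^1$ there for the same reason. The Lipschitz-boundary hypothesis enters precisely to make the dualities $\big(L^2_{p,q}(D),\opa\big)\leftrightarrow\big(L^2_{n-p,n-q}(X),\opa_{\wt c},\,\supp\subset\ol D\big)$ legitimate, exactly as invoked in Theorem \ref{L2runge}, so that $\opa_{\wt c}U=T$ is the correct dual statement to work with; no new analytic input beyond what the section already sets up should be needed.
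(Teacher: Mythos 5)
Your proof follows the same route as the paper's: in one direction, take a compactly supported $L^2$ primitive of the given form supported in $\ol D$, correct it on $X\setminus\ol D$ using the vanishing of ${H}^{n-p,n-q-1}_{\Phi,W^1}(X\setminus D)$ and a $W^1$ extension across the Lipschitz boundary; in the other, extend the given $W^1$ form, apply the injectivity to obtain a primitive supported in $\ol D$, and finish with $H^{n-p,n-q-1}_c(X)=0$. However, the step you yourself single out as delicate is not handled correctly. A compactly supported $L^2$ primitive $S$ of $T$ is \emph{not} automatically $W^1$ on $X\setminus\ol D$, and ``ellipticity of $\opa$ on a neighbourhood of $X\setminus D$ where the data is $\opa$-closed'' is not a valid repair: $\opa$ acting on forms of bidegree $(n-p,n-q-1)$ with $1\leq n-q-1\leq n-1$ is not elliptic, and a $\opa$-closed $L^2$ form of such a bidegree need not lie in $W^1_{loc}$. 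The same issue recurs in the converse direction, where the primitive $W$ of the compactly supported $\opa$-closed form $R$ must be $W^1$ for its restriction to exhibit the triviality of the class in ${H}^{n-p,n-q-1}_{\Phi,W^1}(X\setminus D)$. The paper avoids this by never regularizing a given primitive: since the class vanishes in $H^{n-p,n-q}_c(X)$ (resp.\ since $H^{n-p,n-q-1}_c(X)=0$), the Dolbeault isomorphism between the compactly supported cohomologies computed with currents, smooth forms and Sobolev forms, combined with the interior gain of one derivative for $\opa$, lets one \emph{choose} a compactly supported primitive that is already in $W^1$. With that substitution your argument goes through.

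A further small point: in the converse direction no cut-off of $W$ near $\ol D$ is needed, and cutting off there would destroy the identity $\opa W=R$ near the boundary of $D$. Since $W$ has compact support in $X$, its restriction to $X\setminus\ol D$ automatically has support in the family $\Phi$, which only requires vanishing outside a compact subset of $X$.
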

\begin{proof}
We first consider the necessary condition. Let $f\in L^2_{n-p,n-q}(X)$ be a $\opa$-closed form with support contained in $\ol D$ such that the cohomological class $[f]$ of $f$ vanishes in $H^{n-p,n-q}_c(X)$, by the Dolbeault isomorphism and the interior regularity of the $\opa$ operator, there exists $g\in W^1_{n-p,n-q-1}(X)$ and compactly supported such that $f=\opa g$. Since the support of $f$ is contained in $\ol D$, we have $\opa g=0$ on $X\setminus\ol D$. Therefore the vanishing of the group $H^{n-p,n-q-1}_{\Phi,W^1}(X\setminus D)$ implies that there exists $u\in W^1_{n-p,n-q-2}(X\setminus D)$ such that $\opa u=g$ on $X\setminus\ol D$. Since the boundary of $D$ is Lipschitz there exists $\wt u$ a $W^1$ extension of $u$ to $X$, we set $v=g-\opa\wt u$, then $v\in L^2_{n-p,n-q-1}(X)$ satisfies $f=\opa v$ and ${\rm supp}~v\subset\ol D$.

Conversely, let $g$ be a  $\opa$-closed $(n-p,n-q-1)$-form in $W^1_{n-p,n-q-1}(X\setminus D)$ which vanishes outside a compact subset of $X$ and $\wt g$ a $W^1$ extension of $g$ to $X$, then $\wt g$ has compact support in $X$ and $f=\opa\wt g$ is a form in $L^2_{n-p,n-q}(X)$ with support in the closure of $D$. By the injectivity of the natural map
$H^{n-p,n-q}_{\ol D,L^2}(X)\to H^{n-p,n-q}_c(X)$, there exists $u\in L^2_{n-p,n-q-1}(X)$ with support contained in $\ol D$ and such that $\opa u=f$. We set $v=\wt g-u$, $v$ is then an $L^2$ $\opa$-closed $(n-p,n-q-1)$-form with compact support in $X$ such that $v_{|_{X\setminus\ol D}}=g$. Since $H^{n-p,n-q-1}_c(X)=0$, by the Dolbeault isomorphism and the interior regularity of the $\opa$ operator, we have $v=\opa w$ with $w\in W^1_{n-p,n-q-2}(X)$ with compact support in $X$. Finally we get $g=v_{|_{X\setminus\ol D}}=\opa w_{|_{X\setminus\ol D}}$.
\end{proof}

\begin{cor}\label{L2rungecarac}
Let $X$ be a Stein  manifold of complex dimension $n\geq 2$ and $D\subset\subset X$ a relatively compact pseudoconvex domain with Lipschitz boundary in $X$. Then the following assertions are equivalent:

i) the domain $D$ is $L^2$ Runge,

ii) the natural map
$H^{n,n}_{\ol D,L^2}(X)\to H^{n,n}_c(X)$
is injective,

iii) ${H}^{n,n-1}_{\Phi,W^1}(X\setminus D)=0$.
\end{cor}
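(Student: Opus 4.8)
The plan is to obtain all three equivalences by specializing Theorem~\ref{L2runge} and Proposition~\ref{L2complement} to $q=0$ and $p=0$, so that the work reduces to checking their hypotheses in the present Stein--pseudoconvex setting. Explicitly: Proposition~\ref{L2complement} with $p=q=0$ will give (ii)~$\Leftrightarrow$~(iii) as soon as $H^{n,n-1}_c(X)=0$; and Theorem~\ref{L2runge} with $q=0$, read in bidegree $p=0$ --- which is exactly the statement that $\oc(X)$ is $L^2$-dense in $H^2(D)$, i.e. assertion (i) --- will give (i)~$\Leftrightarrow$~(ii) as soon as $H^{n,n}_c(X)$ and $H^{n,n}_{\ol D,L^2}(X)$ are Hausdorff. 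Chaining the two equivalences then yields the corollary.

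The hypotheses concerning only $X$ are immediate. Since $X$ is a Stein manifold of dimension $n\geq 2$, one has $H^{p,q}(X)=0$ for every $q\geq 1$; in particular $H^{0,1}(X)=0$, whence, by Serre duality, $H^{n,n-1}_c(X)=0$ and $H^{n,n}_c(X)$ is Hausdorff. The delicate point --- the one I expect to be the main obstacle --- is the Hausdorff property of $H^{n,n}_{\ol D,L^2}(X)$, and this is precisely where the Lipschitz boundary and the pseudoconvexity of $D$ are used. Because $\partial D$ is Lipschitz, the $L^2$ $\opa$-complex on $D$ and the $L^2$ $\opa_{\wt c}$-complex of forms with support in $\ol D$ are dual to each other (Lemma~2.4 of \cite{LaShdualiteL2}); the resulting $L^2$ Serre duality shows that $H^{n,n}_{\ol D,L^2}(X)$ is Hausdorff if and only if $H^{0,1}_{L^2}(D)$ is. Since $D$ is relatively compact and pseudoconvex, H\"ormander's $L^2$ existence theorem gives $H^{0,1}_{L^2}(D)=0$, hence $H^{n,n}_{\ol D,L^2}(X)$ is Hausdorff. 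In contrast with the $\ci$-Mergelyan statement of Corollary~\ref{mergelyan}, this uses only $L^2$-solvability of $\opa$ on a bounded pseudoconvex domain, so no smoothness of $\partial D$ and no appeal to Kohn's boundary regularity are needed.

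With $H^{n,n-1}_c(X)=0$, $H^{n,n}_c(X)$ Hausdorff, and $H^{n,n}_{\ol D,L^2}(X)$ Hausdorff all in hand, Proposition~\ref{L2complement} applied with $p=q=0$ gives (ii)~$\Leftrightarrow$~(iii), and Theorem~\ref{L2runge} applied with $q=0$ --- whose proof proceeds separately in each bidegree, and in bidegree $(0,0)$ is exactly the equivalence between the $L^2$ density of $\oc(X)$ in $H^2(D)$ and the injectivity of $H^{n,n}_{\ol D,L^2}(X)\to H^{n,n}_c(X)$ --- gives (i)~$\Leftrightarrow$~(ii). Composing these, the assertions (i), (ii) and (iii) are equivalent, which proves the corollary.
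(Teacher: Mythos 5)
Your proposal is correct and follows essentially the same route as the paper: specialize Theorem~\ref{L2runge} and Proposition~\ref{L2complement} to the top bidegree, use Steinness of $X$ to get $H^{n,n-1}_c(X)=0$ and $H^{n,n}_c(X)$ Hausdorff, and use H\"ormander's $L^2$ theory on the pseudoconvex domain $D$ together with the $L^2$ Serre duality available for Lipschitz boundaries to get that $H^{n,n}_{\ol D,L^2}(X)$ is Hausdorff. Your added remark that, unlike Corollary~\ref{mergelyan}, no boundary smoothness or Kohn regularity is needed here is accurate but does not change the argument.
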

\begin{proof}
Since $X$ is Stein, we have $H^{n,n-1}_c(X)=0$ and $H^{n,n}_c(X)$ is Hausdorff. The domain $D$ being relatively compact, pseudoconvex in $X$,  we have $H^{0,1}_{L^2}( D)=0$ by the classical H\"ormander $L^2$ theory (see
\cite{Ho1, Ho}).
Since the boundary of $D$ is Lipschitz, the Serre duality implies that $H^{n,n}_{\ol D, L^2}(X)$ is Hausdorff.
The corollary follows then from Theorem \ref{L2runge} and Proposition \ref{L2complement}.
\end{proof}

Finally using the characterization of pseudoconvexity by means of $L^2$ cohomology and $L^2$ Serre duality, we can prove the following corollary.

\begin{cor}
Let $X$ be a Stein manifold of complex dimension $n\geq 2$  and $D\subset\subset X$ a relatively compact domain in $X$ with Lipschitz boundary such that $X\setminus D$ is connected. Then
the following assertions are equivalent:

(i) the domain $D$ is pseudoconvex and $L^2$-Runge in $X$;

(ii) $H^{n,r}_{\ol D, L^2}(X)=0$, for $2\leq r\leq n-1$, and the natural map
$H^{n,n}_{\ol D,L^2}(X)\to H^{n,n}_c(X)$ is injective;

(iii) ${H}^{n,q}_{\Phi,W^1}(X\setminus D)=0$, for all $1\leq q\leq n-1$.
\end{cor}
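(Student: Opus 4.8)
The plan is to derive this final corollary from the three preceding results that characterize the two separate phenomena —pseudoconvexity and the $L^2$-Runge property— and then combine them. The key point is that under the standing hypotheses ($X$ Stein, $D\subset\subset X$ relatively compact with Lipschitz boundary, $X\setminus D$ connected) we have the free inputs $H^{n,q}_c(X)=0$ for $q\le n-1$ and $H^{n,n}_c(X)$ Hausdorff, so all the Hausdorffness hypotheses needed in Theorem~\ref{L2runge} and Proposition~\ref{L2complement} reduce, via the ($L^2$) Serre duality, to conditions that either hold automatically or are part of what we are proving.

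First I would treat the equivalence of (ii) and (iii). For each fixed $r$ with $2\le r\le n-1$, i.e.\ $q=n-r$ so that $1\le q\le n-2$, Proposition~\ref{L2complement} applied with $p=0$ gives: since $H^{n,q-1}_c(X)=0$ (as $X$ is Stein), the vanishing ${H}^{n,q-1}_{\Phi,W^1}(X\setminus D)=0$ is equivalent to injectivity of $H^{n,q}_{\ol D,L^2}(X)\to H^{n,q}_c(X)$; but $H^{n,q}_c(X)=0$ for $q\le n-1$, so injectivity of that map is the same as $H^{n,q}_{\ol D,L^2}(X)=0$. Reindexing, ${H}^{n,q}_{\Phi,W^1}(X\setminus D)=0$ for all $1\le q\le n-2$ is equivalent to $H^{n,r}_{\ol D,L^2}(X)=0$ for all $2\le r\le n-1$. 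It remains to match the top degree: the case $q=n-1$ of (iii), namely ${H}^{n,n-1}_{\Phi,W^1}(X\setminus D)=0$, is by Proposition~\ref{L2complement} with $q=n-1$, $p=0$ (again using $H^{n,n-1}_c(X)=0$) equivalent to injectivity of $H^{n,n}_{\ol D,L^2}(X)\to H^{n,n}_c(X)$, which is exactly the remaining clause of (ii). This establishes (ii)$\iff$(iii).

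Next I would prove (i)$\iff$(ii). Here I use the characterization of pseudoconvexity in Stein manifolds by means of $L^2$-cohomology with prescribed support (via the $L^2$ Serre duality), in the same spirit as the smooth statement after Theorem~\ref{L2runge}: for a relatively compact domain $D$ with Lipschitz boundary in a Stein manifold, $D$ is pseudoconvex if and only if $H^{n,r}_{\ol D,L^2}(X)=0$ for $2\le r\le n-1$ (dually, $H^{0,s}_{L^2}(D)=0$ for $1\le s\le n-2$, which is Hörmander's $L^2$ theory, and Hausdorffness in the remaining degrees). Granting this, assume (i). Then $D$ is pseudoconvex, so the vanishing part of (ii) holds, and $H^{n,n}_{\ol D,L^2}(X)$ is Hausdorff by the $L^2$ Serre duality (since $H^{0,1}_{L^2}(D)=0$ on a pseudoconvex $D$, exactly as in Corollary~\ref{L2rungecarac}); $H^{n,n}_c(X)$ is Hausdorff because $X$ is Stein. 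Theorem~\ref{L2runge} with $q=0$ then converts ``$D$ is $L^2$-Runge'' into injectivity of $H^{n,n}_{\ol D,L^2}(X)\to H^{n,n}_c(X)$, giving the last clause of (ii). Conversely, assume (ii). The vanishing clauses give $D$ pseudoconvex by the cited characterization; pseudoconvexity again yields $H^{n,n}_{\ol D,L^2}(X)$ Hausdorff, so Theorem~\ref{L2runge} (with both cohomology groups now Hausdorff) turns the injectivity in (ii) back into the $L^2$-Runge property, and we recover (i).

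The main obstacle I anticipate is not any single deep step but the bookkeeping needed to show that pseudoconvexity of $D$ is genuinely equivalent to the family of vanishing statements $H^{n,r}_{\ol D,L^2}(X)=0$, $2\le r\le n-1$, under only a Lipschitz boundary: one direction (pseudoconvex $\Rightarrow$ vanishing) is Hörmander's theory plus $L^2$ Serre duality, but the converse requires knowing that this cohomological vanishing forces pseudoconvexity, which must be quoted from the $L^2$-cohomology characterization of pseudoconvexity in Stein manifolds (the $L^2$ analogue of the compact-support characterization used in the earlier corollary on Runge domains). Once that characterization and the $L^2$ Serre duality are in hand, everything else is a routine reindexing combined with Theorem~\ref{L2runge} and Proposition~\ref{L2complement}, exactly parallel to the proof of Corollary~\ref{caracrunge} in the compactly-supported setting.
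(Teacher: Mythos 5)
Your treatment of (ii)$\iff$(iii) is correct, modulo an indexing slip at the start (you set $q=n-r$ but then use $q$ as the degree of the prescribed-support group, so in effect $q=r$ and the $\Phi$-group sits in degree $r-1$); applying Proposition~\ref{L2complement} with $p=0$ in every degree and using $H^{n,r}_c(X)=0$ for $r\leq n-1$ to turn injectivity into vanishing is in fact more direct than the paper's route, which passes through Theorem 4.7 of \cite{FuLaSh} and an $L^2$ analogue of Lemma~\ref{complementsupport}. The direction (i)$\Rightarrow$(ii) is also fine.

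The genuine gap is in (ii)$\Rightarrow$(i). The prescribed-support characterization of pseudoconvexity reads: $D$ is pseudoconvex if and only if $H^{n,r}_{\ol D,L^2}(X)=0$ for $2\leq r\leq n-1$ \emph{and} $H^{n,n}_{\ol D,L^2}(X)$ is Hausdorff; the Hausdorffness clause is the Serre dual of the closed range of $\opa$ on $L^2_{0,0}(D)$ and is not a consequence of the vanishing in the lower degrees. You assert that ``the vanishing clauses give $D$ pseudoconvex'' and then deduce the Hausdorffness of $H^{n,n}_{\ol D,L^2}(X)$ from pseudoconvexity --- but this is circular, since the characterization you invoke needs that Hausdorffness as an input. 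The missing step, which is precisely why (ii) can be phrased with injectivity alone, is that the injectivity of $H^{n,n}_{\ol D,L^2}(X)\to H^{n,n}_c(X)$ already forces $H^{n,n}_{\ol D,L^2}(X)$ to be Hausdorff: if $f$ is an $L^2_{loc}$ $(n,n)$-form with support in $\ol D$ satisfying $\int_D f\varphi=0$ for every $\varphi\in H^2(D)$, then $\int_X f\varphi=0$ for every $\varphi\in\oc(X)$, so $f=\opa u$ with $u$ compactly supported in $X$ because $H^{n,n}_c(X)$ is Hausdorff ($X$ Stein), and the injectivity then yields $f=\opa g$ with $\operatorname{supp} g\subset\ol D$. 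Once this observation is inserted, both the pseudoconvexity characterization and Theorem~\ref{L2runge} become applicable under hypothesis (ii) and your argument closes.
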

\begin{proof}
Using H\"ormander's $L^2$ theory for the necessary condition and Theorem 5.1 in \cite{FuLaSh} for the sufficient one, we get that a domain $D$, such that interior$(\ol D)=D$, is pseudoconvex if and only if $H^{0,q}_{L^2}(D)=0$ for all $1\leq q\leq n-1$. Applying $L^2$ Serre duality (see \cite{ChaShdual}), we get that if $D$ has a Lipschitz boundary, then $D$ is pseudoconvex if and only if $H^{n,q}_{\ol D,L^2}(X)=0$ for all $2\leq q\leq n-1$ and $H^{n,n}_{\ol D,L^2}(X)$ is Hausdorff.

To  get the equivalence between (i) and (ii), it remains to prove that the injectivity of the natural map
$H^{n,n}_{\ol D,L^2}(X)\to H^{n,n}_c(X)$  implies that $H^{n,n}_{\ol D,L^2}(X)$ is Hausdorff and to apply Theorem \ref{L2runge}.

Let $f$ be an $L^2_{loc}$ $(n,n)$-form with support in $\ol D$ such that $\int_D f\varphi=0$ for any $L^2$ holomorphic function $\varphi$ on $D$. In particular $\int_X f\varphi=0$ for any $L^2_{loc}$ holomorphic function $\varphi$ on $X$ and $X$ being Stein, $H^{n,n}_c(X)$ is Hausdorff and therefore $f=\opa u$ for some $L^2_{loc}$ $(n,n-1)$-form $u$ with compact support in $X$, i.e. $[f]=0$ in $H^{n,n}_c(X)$. By the injectivity of the map $H^{n,n}_{\ol D,L^2}(X)\to H^{n,n}_c(X)$, we get that $f=\opa g$ for some $L^2_{loc}$ $(n,n-1)$-form $g$ with support in $\ol D$, which ends the proof.

Let us prove now the equivalence between (ii) and (iii).
It follows from Theorem 4.7 in \cite{FuLaSh} that, for all $2\leq q\leq n-1$, $H^{n,q}_{\ol D,L^2}(X)=0$ if and only if ${H}^{n,q{\color{red} -1}}_{W^1_{loc}}(X\setminus\ol D)=0$.
It remain to prove that, for all
$1\leq q\leq n-2$, $H^{n,q}_{\Phi,W^1}(X\setminus D)=0$ if and only if $H^{n,q}_{W^1_{loc}}(X\setminus D)=0$ and that $H^{n,n-1}_{\Phi,W^1}(X\setminus D)=0$, implies $H^{n,n-1}_{W^1_{loc}}(X\setminus D)$ is Hausdorff. Using the Dolbeault isomorphism, this can be done in the same way as for Lemma \ref{complementsupport}.
Then we apply Proposition \ref{L2complement} to get the result.
\end{proof}

\begin{defin}
A relatively compact domain $D$ in $X$ has the \emph{$L^2$-Mergelyan property} if and only if the space $\oc(\ol D)$ of germs of holomorphic functions on $\ol D$ is dense in the space $\oc(D)\cap L^2(D)$ of holomorphic functions on $D$ for the $L^2$ topology in $D$.
\end{defin}

Let us consider now the case when the closure of $D$ is a Stein compactum.
We prove the following result on the $L^2$-Mergelyan property, which is slightly stronger than Theorem 26 in the survey paper \cite{FoForWo}.

 \begin{thm}\label{th:L2Mergelyan}  Let $X$ be a complex   manifold of complex dimension $n$ and let $ D\subset\subset X$  a
relatively compact pseudoconvex domain with  Lipschitz  boundary in $X$ whose closure $\overline D$ has a
Stein neighborhood basis. Then $\mathcal O(\overline D)$ is dense in $H^2(D)=L^2(D)\cap \mathcal O(D)$.
 \end{thm}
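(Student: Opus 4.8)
The strategy is the duality method used throughout the paper: reduce via Hahn--Banach to the vanishing of a class in an $L^2$ Dolbeault cohomology group with support in $\overline D$, and establish that vanishing through the cohomology of the complement of $\overline D$.

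Fix a decreasing sequence $(U_k)_{k\in\nb}$ of relatively compact Stein neighbourhoods of $\overline D$ with $U_{k+1}\subset\subset U_k$, $\bigcap_k U_k=\overline D$, and (after shrinking) with $U_k$ strictly pseudoconvex. By Hahn--Banach it suffices to show: if $\mu\in L^2_{n,n}(D)$, regarded as an $L^2$ $(n,n)$-form on $X$ supported in $\overline D$, satisfies $\int_D\omega\wedge\mu=0$ for every germ $\omega\in\oc(\overline D)$, then $\int_D f\wedge\mu=0$ for every $f\in H^2(D)$. Since $D$ is pseudoconvex with Lipschitz boundary, H\"ormander's $L^2$ theory gives $H^{0,1}_{L^2}(D)=0$, and $L^2$ Serre duality (Lemma~2.4 of \cite{LaShdualiteL2}, \cite{ChaShdual}) then shows that $H^{n,n}_{\ol D,L^2}(X)$ is Hausdorff and is perfectly paired with $H^2(D)$ via $([\nu],g)\mapsto\int_D g\wedge\nu$. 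Hence the conclusion we want is \emph{equivalent} to $[\mu]=0$ in $H^{n,n}_{\ol D,L^2}(X)$, i.e.\ to the existence of $S\in L^2_{n,n-1}$ with $\supp S\subset\overline D$ and $\opa_{\wt c}S=\mu$.

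Now, $U_0$ being Stein, $H^{n,n}_c(U_0)$ is Hausdorff and Serre dual to $\oc(U_0)$; since $\supp\mu\subset\overline D\subset U_0$ and $\int_{U_0}\omega\wedge\mu=\int_D\omega\wedge\mu=0$ for every $\omega\in\oc(U_0)\subset\oc(\overline D)$, we get $[\mu]=0$ in $H^{n,n}_c(U_0)$, so there is $S_0\in L^2_{n,n-1}$ with compact support in $U_0$ and $\opa S_0=\mu$. On $U_0\setminus\overline D$ one has $\opa S_0=0$, and for any $\opa$-closed compactly supported $(0,1)$-form $\theta=\opa\omega$ on $U_0\setminus\overline D$, with $\omega$ compactly supported in $U_0$ and holomorphic near $\overline D$ (possible since $U_0$ is Stein and $\theta$ vanishes near $\overline D$), one computes $\int_{U_0\setminus\overline D}S_0\wedge\theta=\pm\int_{U_0}\opa S_0\wedge\omega=\pm\int_D\omega\wedge\mu=0$, the hypothesis on $\mu$ being used at the last step. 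Because $\overline D$ is a Stein compactum it is holomorphically convex, $H^{0,q}(\overline D)=0$ for $1\le q\le n-1$, so by Proposition~1.1 of \cite{Lacalotte} $H^{n,n-1}(U_0\setminus\overline D)$ is Hausdorff. The orthogonality above therefore gives $[S_0]=0$ there; and a support-transfer argument of the kind used in Lemma~\ref{complementsupport} and in the proof of Theorem~\ref{complementK}, in which the decreasing Stein exhaustion $(U_k)$ of $\overline D$ and the vanishing of the cohomology of the shells $U_l\setminus\overline{U_{l+1}}$ are used to correct the primitive near $\partial U_0$, yields $S_0=\opa W$ on $U_0\setminus\overline D$ with $W$ vanishing near $\partial U_0$. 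Extending $W$ by zero and setting $S:=S_0-\opa W$ gives an $L^2$ form with $\supp S\subset\overline D$ and $\opa S=\mu$; the Lipschitz boundary then puts $S$ in the domain of $\opa_{\wt c}$ with $\opa_{\wt c}S=\mu$.

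Finally, for $f\in H^2(D)$ the $\opa$/$\opa_{\wt c}$ duality for Lipschitz domains yields $\int_D f\wedge\mu=\int_D f\wedge\opa_{\wt c}S=\pm\int_D\opa f\wedge S=0$, and Hahn--Banach gives the asserted density. The main obstacle is the support-transfer step: upgrading a primitive defined on $U_0\setminus\overline D$ to one vanishing near $\partial U_0$, so that the corrected $S$ is supported \emph{exactly} in $\overline D$; this top-degree statement is not covered by Lemma~\ref{complementsupport} alone and is exactly where the Stein-neighbourhood-basis hypothesis, through the mechanism of Theorem~\ref{complementK}, is indispensable.
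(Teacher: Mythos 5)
Your argument takes a genuinely different route from the paper's. The paper's proof is short and entirely constructive: given $h\in H^2(D)$ and the Stein neighborhood basis $(D_\nu)$ of $\ol D$, Friedrichs' Lemma (this is where the Lipschitz boundary enters, via (i) of Lemma 4.3.2 in \cite{ChSh}) produces $h_\nu\in L^2(D_\nu)$ with $h_\nu\to h$ in $L^2(D)$ and $\|\opa h_\nu\|_{L^2(D_\nu)}\to 0$; H\"ormander's $L^2$ existence theorem on the pseudoconvex $D_\nu$, with a constant uniform in $\nu$, gives $u_\nu$ solving $\opa u_\nu=\opa h_\nu$ with $\|u_\nu\|_{L^2(D_\nu)}\to 0$; then $H_\nu=h_\nu-u_\nu\in\oc(D_\nu)\subset\oc(\ol D)$ converges to $h$ in $L^2(D)$. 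No duality, no cohomology of the complement, and it works for every $n\geq 1$. Your Hahn--Banach/Serre-duality scheme is faithful to the machinery of the rest of the paper and has the virtue of exhibiting the cohomological obstruction explicitly, but note that you are in effect proving the harder companion statement (solvability of $\opa$ with support exactly in $\ol D$, which the paper deduces \emph{from} Theorem \ref{th:L2Mergelyan} via Theorem \ref{mergelyancompactL2} in the final corollary of that section) in order to recover the density, whereas the direct argument bypasses it entirely.

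As written, your proof is not complete. The step you yourself flag as the main obstacle --- upgrading $S_0=\opa W$ on $U_0\setminus\ol D$ to a primitive vanishing near $\partial U_0$ --- is asserted, not carried out, and Theorem \ref{complementK} is not the right tool for it: that theorem concerns extendable currents on $X\setminus K$ with the support family relative to $X$ and carries an extra connectedness hypothesis on the $X\setminus\ol U_k$ that you have not arranged. The step can in fact be done by the cutoff argument from the injectivity half of Lemma \ref{complementsupport}, adapted to bidegree $(n,n-1)$ inside the Stein manifold $U_0$ (take $\chi$ compactly supported in $U_0$, $\chi\equiv1$ near $\ol D\cup\supp S_0$; then $\opa(\chi W)-S_0=\opa\chi\wedge W$ is $\opa$-closed with compact support, hence $\opa$-exact with compact support since $H^{n,n-1}_c(U_0)=0$), but this needs to be written, and it uses $n\geq2$ --- as do your appeals to $H^{0,1}_c(U_0)=0$ and to $H^{0,q}(\ol D)=0$ for $1\leq q\leq n-1$ --- so your proof does not cover $n=1$, while the statement and the paper's proof do. Finally, the passage from the current-level primitives furnished by the duality arguments to an honest $L^2$ form $S$ supported in $\ol D$ lying in the domain of $\opa_{\wt c}$ requires $W^1$ regularity of $W$ up to the Lipschitz boundary of $D$ from the outside; this is handled in the paper's Proposition \ref{L2complement} by working throughout in the $W^1$ category, and it is glossed over here.
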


 \begin{proof} Let $(D_\nu)$ be the Stein neighborhood basis of $\overline D$. Let  $h\in H^2(D)$. Since $bD$ is Lipschitz, using Friedrichs' Lemma (see (i) in Lemma 4.3.2 in \cite{ChSh}),
 there exists a sequence of functions $h_\nu\in L^2(D_\nu)$ such that
  $h_\nu\to h$ in $L^2(D)$
 and
 $$\|\bar\partial h_\nu\|_{L^2(D_\nu)}\to 0.$$
 Each $D_\nu$ is pseudoconvex. From the H\"ormander's $L^2$ existence theorem, there exists $u_\nu\in L^2(D_\nu)$ such that $\bar\partial u_\nu=\bar\partial h_\nu$
 on $D_\nu$ with
 $$\|u_\nu\|_{L^2(D_\nu)}\le C\|\bar\partial h_\nu\|_{L^2(D_\nu)}\to 0$$
 where $C$ is independent of $\nu$.

 Let $H_\nu=h_\nu-u_\nu$. Then $H_\nu\to h$ in $L^2(D)$ and $H_\nu\in H^2(D_\nu)\subset \mathcal O(D_\nu)$.
 The theorem is proved.
\end{proof}

From the Oka-Weil theorem associated to Theorem \ref{th:L2Mergelyan}, it is easy to derive the following sufficient geometric condition for a pseudoconvex domain to be $L^2$-Runge in $X$.

\begin{thm}\label{geomL2}
Let $X$ be a Stein manifold of complex dimension $n\geq 2$ and let $ D\subset\subset X$ be a
relatively compact pseudoconvex domain with  Lipschitz  boundary in $X$. Assume the closure $\overline D$ of $D$ has a $\oc(X)$-convex neighborhood basis, then $D$ is $L^2$-Runge in $X$.
\end{thm}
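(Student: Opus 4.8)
The plan is to combine the $L^2$-Mergelyan property established in Theorem~\ref{th:L2Mergelyan} with the Oka--Weil approximation of Corollary~\ref{okaweil}. First I would note that in the Stein manifold $X$ every $\oc(X)$-convex compact subset admits a neighborhood basis of pseudoconvex domains which are Runge in $X$ (Theorem 8.17 from Chapter VII in \cite{Lalivre}), hence in particular a Stein neighborhood basis. Consequently, the assumption that $\ol D$ has a $\oc(X)$-convex neighborhood basis forces $\ol D$ to have a Stein neighborhood basis, so that Theorem~\ref{th:L2Mergelyan} applies and $\oc(\ol D)$ is dense in $H^2(D)=L^2(D)\cap\oc(D)$ for the $L^2$ topology on $D$.

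Now fix $h\in H^2(D)$ and $\varepsilon>0$. By the previous step there are a neighborhood $W$ of $\ol D$ and a function $f\in\oc(W)$ with $\|h-f\|_{L^2(D)}<\varepsilon/2$. Using the hypothesis I would then select an $\oc(X)$-convex compact set $K$ with $\ol D\subset\interieur{K}\subset K\subset W$; thus $f$ is holomorphic on a neighborhood of $K$. By Corollary~\ref{okaweil} the compact $K$ is Runge in $X$, so there is a sequence $(f_j)_{j\in\nb}$ in $\oc(X)$ converging to $f$ in the $\ci(K)$ topology, in particular uniformly on $\ol D\subset K$. Since $D$ is relatively compact, uniform convergence on $\ol D$ implies convergence in $L^2(D)$; hence for $j$ large enough $\|f-f_j\|_{L^2(D)}<\varepsilon/2$, and therefore $\|h-f_j\|_{L^2(D)}<\varepsilon$. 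As $h\in H^2(D)$ and $\varepsilon>0$ were arbitrary, $\oc(X)$ is dense in $H^2(D)$ for the $L^2$ topology, i.e. $D$ is $L^2$-Runge in $X$.

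I do not expect a genuine obstacle here: the statement is a two-step approximation, and the only points demanding a little care are the reduction from a $\oc(X)$-convex neighborhood basis to a Stein neighborhood basis (needed to invoke Theorem~\ref{th:L2Mergelyan}) and the elementary passage from uniform approximation on $\ol D$ to $L^2(D)$ approximation, which is immediate because $D$ is relatively compact.
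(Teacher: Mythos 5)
Your proposal is correct and follows essentially the same two-step argument as the paper: first invoke Theorem~\ref{th:L2Mergelyan} (after observing that the $\oc(X)$-convex neighborhood basis yields a Stein neighborhood basis) to approximate in $L^2(D)$ by a function holomorphic near $\ol D$, then apply the Oka--Weil theorem (Corollary~\ref{okaweil}) on an $\oc(X)$-convex compact neighborhood of $\ol D$ and pass from uniform to $L^2$ approximation using the finite volume of $D$. No gaps.
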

\begin{proof}
Since the closure $\overline D$ of $D$ has a $\oc(X)$-convex neighborhood basis, it admits in particular a Stein neighborhood basis and we can apply Theorem \ref{th:L2Mergelyan}. therefore if $f\in\oc(D)\cap L^2(D)$ and $\varepsilon>0$ is a real number, there exists a neighborhood $V$ of $\ol D$ and a holomorphic function $g\in\oc(V)$ such that $\|f-g\|_{L^2(D)}\leq\frac{\varepsilon}{2}$. By hypothesis there exists $U\subset\subset V$ a neighborhood of $\ol D$ such that $\ol U$ is $\oc(X)$-convex. So  we can apply the Oka-Weil theorem (see Corollary \ref{okaweil}) and then there exists a function $h\in\oc(X)$ such that
$$\|g-h\|_{L^2(U)}\leq C \|g-h\|_\infty\leq\frac{\varepsilon}{2}.$$ Finally  $$\|f-h\|_{L^2(D)}\leq\|f-g\|_{L^2(D)}+\|g-h\|_{L^2(U)}\leq\varepsilon.$$
\end{proof}

The next result on the $\opa$ problem with mixed bondary conditions in an annulus is a direct consequence of Theorem \ref{geomL2} and the characterization of $L^2$-Runge domains in a Stein manifold of Corollary \ref{L2rungecarac}.
\begin{cor}
Let $X$ be a Stein manifold of complex dimension $n\geq 2$ and let $ D\subset\subset X$ be a
relatively compact pseudoconvex domain with  Lipschitz  boundary in $X$. Assume the closure $\overline D$ of $D$ has a $\oc(X)$-convex neighborhood basis, then
${H}^{n,n-1}_{\Phi,W^1}(X\setminus D)=0$.
\end{cor}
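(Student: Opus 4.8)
The plan is to chain together two results already established in the excerpt. First, Theorem~\ref{geomL2} tells us that under the stated hypotheses --- $X$ Stein of dimension $n\geq 2$, $D\subset\subset X$ relatively compact pseudoconvex with Lipschitz boundary, and $\ol D$ admitting a $\oc(X)$-convex neighborhood basis --- the domain $D$ is $L^2$-Runge in $X$. So it suffices to deduce the vanishing of ${H}^{n,n-1}_{\Phi,W^1}(X\setminus D)$ from the $L^2$-Runge property.

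For this step I would invoke Corollary~\ref{L2rungecarac}, which applies verbatim since $X$ is Stein of dimension $n\geq 2$ and $D$ is relatively compact, pseudoconvex, with Lipschitz boundary. That corollary asserts the equivalence of the three assertions: (i) $D$ is $L^2$-Runge, (ii) the natural map $H^{n,n}_{\ol D,L^2}(X)\to H^{n,n}_c(X)$ is injective, and (iii) ${H}^{n,n-1}_{\Phi,W^1}(X\setminus D)=0$. Having just obtained (i) from Theorem~\ref{geomL2}, we conclude (iii), which is exactly the desired statement.

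Concretely, the proof reads as follows. By Theorem~\ref{geomL2}, the domain $D$ is $L^2$-Runge in $X$. Since $X$ is a Stein manifold of complex dimension $n\geq 2$ and $D\subset\subset X$ is relatively compact, pseudoconvex, with Lipschitz boundary, the hypotheses of Corollary~\ref{L2rungecarac} are satisfied, and the equivalence (i)~$\Leftrightarrow$~(iii) there gives that ${H}^{n,n-1}_{\Phi,W^1}(X\setminus D)=0$.

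There is essentially no obstacle here: the corollary is a routine application of the two cited results, and the only thing to check is that the hypotheses of Theorem~\ref{geomL2} indeed imply those of Corollary~\ref{L2rungecarac}, which they do since a $\oc(X)$-convex neighborhood basis is in particular a (Stein, hence) arbitrary neighborhood basis, so no extra regularity or convexity assumptions on $D$ are needed beyond what is already stated. The substance of the result lies upstream, in Theorem~\ref{geomL2} (which itself rests on the $L^2$-Mergelyan Theorem~\ref{th:L2Mergelyan} and the Oka--Weil Corollary~\ref{okaweil}) and in Corollary~\ref{L2rungecarac} (which rests on Theorem~\ref{L2runge} and Proposition~\ref{L2complement} together with H\"ormander's $L^2$ theory and $L^2$ Serre duality).
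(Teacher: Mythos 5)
Your proposal is correct and coincides with the paper's own argument: the authors state that this corollary is a direct consequence of Theorem~\ref{geomL2} (which gives that $D$ is $L^2$-Runge in $X$) combined with the equivalence (i)~$\Leftrightarrow$~(iii) of Corollary~\ref{L2rungecarac}. Nothing further is needed.
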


As for the $\ci$-Mergelyan property, we can relate the $L^2$-Mergelyan property with the solvability of the $\opa$-equation with prescribed support.

 \begin{thm}\label{mergelyanpropL2}
Let $X$ be a complex manifold of complex dimension $n\geq 1$, $D\subset\subset X$ a relatively compact domain with Lipschitz boundary in $X$. Assume  $\ol D$ admits a neighborhood basis of $1$-convex open subsets. Assume that for all $(n,n)$-form $f$ in $L^2(X)$ with support in $\ol D$ such that, for any sufficiently small neighborhood $V$ of $\ol D$, $f=\opa g_V$ for some $(n,n-1)$-form $g_V$ in $L^2(X)$ with compact support in $V$, there exists an $(n,n-1)$-form $g$ in $L^2(X)$ with support in $\ol D$ satisfying $f=\opa g$, then $D$ has the $L^2$-Mergelyan property.
\end{thm}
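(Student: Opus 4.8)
The plan is to mimic the proof of Theorem \ref{mergelyanprop}, replacing currents by $L^2$ forms and the Hausdorff/duality statement for $\ci_{p,q}(\ol D)$ and $({\ec'})^{n-p,n-q}_{\ol D}(X)$ by the $L^2$ duality between $L^2(D)$ and ${\ec'}^{n,n}_{\ol D}(X)$-type spaces, i.e. the duality between the complex of $\opa$ in $L^2_{p,q}(D)$ and the complex of $\opa_{\wt c}$ with support in $\ol D$, valid since $bD$ is Lipschitz (Lemma 2.4 in \cite{LaShdualiteL2}). First I would invoke the Hahn--Banach theorem: to prove that $\oc(\ol D)$ is dense in $H^2(D)$ for the $L^2$ topology, it suffices to show that every continuous $L^2$ linear functional on $L^2(D)$ that annihilates $\oc(\ol D)$ also annihilates $\oc(D)\cap L^2(D)$. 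Such a functional is represented by an $(n,n)$-form $f\in L^2(X)$ with support in $\ol D$ (using the $L^2$ pairing $\varphi\mapsto\int_D\varphi\wedge f$), and the annihilation condition reads $\int_D\varphi\wedge f=0$ for every $\varphi\in\oc(\ol D)$.

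Next I would produce, for each sufficiently small neighborhood $V$ of $\ol D$, a solution $g_V\in L^2(X)$ with compact support in $V$ of $\opa g_V=f$. Here I use that $\ol D$ has a neighborhood basis of $1$-convex open subsets: pick $V$ to be $1$-convex, so by Andreotti--Grauert / the $L^2$ theory on $1$-convex manifolds (as used already in Corollary \ref{L2rungecarac} via Serre duality) $H^{n,n}_{c,L^2}(V)$ is Hausdorff, and the vanishing of the pairing $\int_V\varphi\wedge f=0$ against all $L^2$ holomorphic $\varphi$ on $V$ forces $[f]=0$ there, hence $f=\opa g_V$ for some $g_V\in L^2(X)$ with compact support in $V$. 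Then the structural hypothesis of the theorem applies and yields a single $(n,n-1)$-form $g\in L^2(X)$ with support in $\ol D$ such that $f=\opa g$. Finally, for any $h\in\oc(D)\cap L^2(D)$, the $L^2$ duality for the pair $(\opa,\opa_{\wt c})$ on $D$ with Lipschitz boundary gives
$$\int_D h\wedge f=\int_D h\wedge\opa_{\wt c} g=\pm\int_D \opa h\wedge g=0,$$
since $\opa h=0$; hence $f$ annihilates $\oc(D)\cap L^2(D)$ and Hahn--Banach concludes.

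The main obstacle, and the point requiring care, is the passage through the $L^2$ duality: one must be sure that the $(n,n-1)$-form $g$ supplied by the hypothesis lies in the domain of $\opa_{\wt c}$ relative to $D$ (i.e. $g\in L^2(X)$, $\supp g\subset\ol D$, $\opa g\in L^2(X)$), so that the integration-by-parts identity $\int_D h\wedge\opa_{\wt c} g=\pm\int_D\opa h\wedge g$ is legitimate for $h\in\oc(D)\cap L^2(D)$ — this is exactly where the Lipschitz regularity of $bD$ is used, via Lemma 2.4 in \cite{LaShdualiteL2}. A secondary technical point is checking that the functional-analytic representation step is valid, namely that every bounded linear functional on $L^2(D)$ annihilating the dense-target subspace is given by integration against an $L^2$ form with support in $\ol D$; this is immediate from the self-duality of $L^2(D)$ together with extension by zero. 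Everything else is a direct transcription of the argument for Theorem \ref{mergelyanprop}.
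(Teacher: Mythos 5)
Your proposal is correct and follows essentially the same route as the paper's proof: Hahn--Banach reduction to an $(n,n)$-form $f\in L^2(X)$ supported in $\ol D$ annihilating $\oc(\ol D)$, solvability $f=\opa g_V$ on each $1$-convex neighborhood $V$ via Hausdorffness of $H^{n,n}_c(V)$, the structural hypothesis to pass to $g$ supported in $\ol D$, and the Lipschitz-boundary $L^2$ integration by parts against $h\in H^2(D)$. The only difference is that you spell out the duality and representation steps that the paper leaves implicit.
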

\begin{proof}
Assume the cohomological condition holds, we apply the Hahn-Banach theorem. Let $f$ be an $(n,n)$-form $f$ in $L^2(X)$ with support in $\ol D$ such that {\color{red} $\int_D f\varphi=0$} for any $\varphi\in {\color{red} \oc(\ol D)}$. For any $1$-convex neighborhood $V$ of $\ol D$, $H^{n,n}_c(V)$ is Hausdorff. Hence there exists an $(n,n-1)$-form $g_V$ in $L^2(X)$ with compact support in $V$ such that $f=\opa g_V$. Using the hypothesis, we get that $f=\opa g$ for some $(n,n-1)$-form $g$ in $L^2(X)$ with support in $\ol D$. Let $\psi\in H^2(D)$, then
{\color{red}
$$\int_D f\psi=\int_D (\opa g)\psi=\pm \int_D g\wedge\opa\psi=0.$$
}
\end{proof}

Conversely we get the next theorem.

\begin{thm}\label{mergelyancompactL2}
Let $X$ be a complex manifold of complex dimension $n\geq 1$, $D\subset\subset X$ a relatively compact pseudoconvex domain with Lipschitz boundary in $X$. Assume $D$ has the $L^2$-Mergelyan property, then if  $f$ is an $(n,n)$-form in $L^2(X)$ with support in $\ol D$ such that, for any neighborhood $V$ of $\ol D$, $f=\opa g_V$ for some $(n,n-1)$-form $g_V$ in $L^2(X)$ with compact support in $V$, there exists an $(n,n-1)$-form $g$ in $L^2(X)$ with support in $\ol D$ satisfying $f=\opa g$.
\end{thm}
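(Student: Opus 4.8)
The plan is to mimic the proof of Theorem \ref{mergelyancompact}, replacing the pairing of an $(n,n)$-current against smooth test forms by the $L^2$ pairing of an $(n,n)$-form in $L^2(X)$ against $L^2$ holomorphic functions, and to exploit the $L^2$-Mergelyan density together with a uniform $L^2$-boundedness of the solution operators $g_V$ on a fixed neighborhood. First I would fix an $(n,n)$-form $f\in L^2(X)$ with support in $\ol D$ satisfying the hypothesis, choose once and for all a $1$-convex (or merely a suitable fixed) neighborhood $V_0$ of $\ol D$ and a form $g_{V_0}\in L^2(X)$ with compact support in $V_0$ such that $f=\opa g_{V_0}$; this $g_{V_0}$ will be the reference solution whose $L^2$-norm on $V_0$ is a fixed finite constant $C$. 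Then for any $\psi\in H^2(D)=\oc(D)\cap L^2(D)$ I would write, using that $\supp f\subset\ol D\subset V_0$ and Stokes/integration by parts (valid since $g_{V_0}$ has compact support in $V_0$ and $\opa\psi=0$ on $D$),
\[
\int_D f\,\psi=\int_{V_0} (\opa g_{V_0})\,\psi.
\]
The difficulty is that $\psi$ is only defined and $\opa$-closed on $D$, not on $V_0$, so this last integral is not literally zero; this is exactly where the Mergelyan hypothesis enters.

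The key step is therefore: given $\psi\in H^2(D)$ and $\varepsilon>0$, use the $L^2$-Mergelyan property of $D$ to produce a neighborhood $W$ of $\ol D$ and a holomorphic function $h\in\oc(W)$ with $\|h-\psi\|_{L^2(D)}<\varepsilon$. I would then estimate
\[
\Bigl|\int_D f\,\psi\Bigr|\le \Bigl|\int_D f\,(\psi-h)\Bigr|+\Bigl|\int_D f\,h\Bigr|
\le \|f\|_{L^2(D)}\,\varepsilon+\Bigl|\int_D f\,h\Bigr|,
\]
and for the second term I would pick (after shrinking $W$ so that $W\subset V_0$, and noting $W$ may be taken $1$-convex) a solution $g_W\in L^2(X)$ with compact support in $W$ and $f=\opa g_W$; since $h$ is $\opa$-closed on $W\supset\ol D\supset\supp f$ and $g_W$ is compactly supported in $W$, integration by parts gives $\int_D f\,h=\int_W (\opa g_W)\,h=\pm\int_W g_W\wedge\opa h=0$. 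Hence $|\int_D f\,\psi|\le \|f\|_{L^2(D)}\,\varepsilon$ for every $\varepsilon>0$, so $\int_D f\,\psi=0$ for all $\psi\in H^2(D)$.

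Finally, since $f\perp H^2(D)$ in the $L^2$ pairing and (as in Corollary \ref{L2rungecarac}, using that $D$ is pseudoconvex so $H^{0,1}_{L^2}(D)=0$ by H\"ormander's $L^2$ theory and hence, by $L^2$ Serre duality with the Lipschitz boundary, $H^{n,n}_{\ol D,L^2}(X)$ is Hausdorff) the annihilator of the range of $\opa_{\wt c}$ acting into $L^2_{n,n}$ with support in $\ol D$ is precisely the $L^2$ holomorphic functions on $D$, we conclude that $f$ lies in the closure of that range; Hausdorffness upgrades "closure of the range" to "the range," so $f=\opa g$ for some $(n,n-1)$-form $g\in L^2(X)$ with support in $\ol D$. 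I expect the main obstacle to be the bookkeeping needed to guarantee that the neighborhoods $W$ produced by Mergelyan can be taken inside a fixed $1$-convex $V_0$ and $1$-convex themselves (so that $H^{n,n}_c(W)$ is Hausdorff and a compactly supported $L^2$ solution $g_W$ exists), and the verification that the integration-by-parts identities hold in the $L^2$ sense across the Lipschitz boundary of $D$ — both of which are handled by the duality results quoted from \cite{LaShdualiteL2} and the $1$-convex neighborhood basis hypothesis, exactly parallel to the current-valued case in Theorem \ref{mergelyancompact}.
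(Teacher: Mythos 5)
Your argument is correct and follows essentially the same route as the paper: approximate $\varphi\in H^2(D)$ by a function holomorphic on a neighborhood $W$ of $\ol D$ via the $L^2$-Mergelyan property, use the hypothesized compactly supported solution $g_W$ and integration by parts to kill the second term, conclude $\int_D f\varphi=0$ for all $\varphi\in H^2(D)$, and then invoke Hausdorffness of $H^{n,n}_{\ol D,L^2}(X)$ (from pseudoconvexity, the Lipschitz boundary and $L^2$ Serre duality) to get $f=\opa g$ with $\supp g\subset\ol D$. The only superfluous elements are the fixed reference neighborhood $V_0$ and the worry about taking the $W$'s to be $1$-convex — the existence of $g_W$ for \emph{every} neighborhood $W$ of $\ol D$ is already part of the hypothesis on $f$, so no convexity of $W$ is needed.
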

\begin{proof}
Assume $D$ has the $L^2$-Mergelyan property, then for any $\varphi\in H^2(D)$ and any $\varepsilon>0$, there exists a neighborhood $W$ of $\ol D$ and a function $\psi\in\oc(W)$ such that $\|\varphi-\psi\|_{L^2(D)}<\varepsilon$.

Since $D$ is pseudoconvex with Lipschitz boundary, $H^{n,n}_{\ol D,L^2}(X)$ is Hausdorff.

Let $f$ be an $(n,n)$-form in $L^2(X)$ with support in $\ol D$ such that, for any neighborhood $V$ of $\ol D$, there exists an $(n,n-1)$-form $g_V$ in $L^2(X)$ with compact support in $V$ satisfying $f=\opa g_V$. Let $\varphi\in H^2(D)$, the density hypothesis implies that for any $\varepsilon>0$, there exists $\psi\in\oc(V)$ for some neighborhood $V$ of $\ol D$ such that $\|\varphi-\psi\|_{L^2(D)}<\varepsilon$.

 Therefore
{\color{red}
$$|\int_D f\varphi|\leq |\int_D f(\varphi-\psi)|+|\int_D f\psi|\leq C\varepsilon+|\int_D (\opa g_V)\psi>|\leq C\varepsilon+|\int_D g_V\wedge\opa\psi|\leq C\varepsilon.$$
}
So for any $\varphi\in H^2(D)$, $<f,\varphi>=0$ and since $H^{n,n}_{\ol D,L^2}(X)$ is Hausdorff, we get $f=\opa g$ for some $(n,n-1)$-form $g$ in $L^2(X)$ with support in $\ol D$.
\end{proof}

As a direct consequence of Theorems \ref{mergelyancompactL2} and \ref{th:L2Mergelyan}, we obtain the following result.

\begin{cor}
Let $X$ be a complex manifold of complex dimension $n\geq 1$, $D\subset\subset X$ a relatively compact pseudoconvex domain with Lipschitz boundary in $X$, whose closure $\ol D$ has a Stein neighborhood basis. Then if  $f$ is an $(n,n)$-form in $L^2(X)$ with support in $\ol D$ such that, for any neighborhood $V$ of $\ol D$, $f=\opa g_V$ for some $(n,n-1)$-form $g_V$ in $L^2(X)$ with compact support in $V$, there exists an $(n,n-1)$-form $g$ in $L^2(X)$ with support in $\ol D$ satisfying $f=\opa g$.
\end{cor}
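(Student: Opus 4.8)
The plan is to combine the two immediately preceding results exactly as the corollary statement suggests: Theorem~\ref{th:L2Mergelyan} produces the $L^2$-Mergelyan property for $D$, and then Theorem~\ref{mergelyancompactL2} converts that property into the desired solvability statement for the $\opa$-equation with prescribed support. So the proof is essentially a two-line deduction, and the real content is checking that the hypotheses of both theorems are met.

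First I would verify the hypotheses of Theorem~\ref{th:L2Mergelyan}: $X$ is a complex manifold of dimension $n$, $D\subset\subset X$ is relatively compact, pseudoconvex, with Lipschitz boundary, and $\ol D$ has a Stein neighborhood basis --- all of these are in the present hypothesis verbatim. Hence Theorem~\ref{th:L2Mergelyan} applies and $\oc(\ol D)$ is dense in $H^2(D)=L^2(D)\cap\oc(D)$; by the definition of the $L^2$-Mergelyan property, $D$ has that property. Next I would check the hypotheses of Theorem~\ref{mergelyancompactL2}: again $X$ is a complex manifold of dimension $n\geq 1$, $D\subset\subset X$ is relatively compact, pseudoconvex, with Lipschitz boundary --- these match --- and we have just established that $D$ has the $L^2$-Mergelyan property. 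Therefore Theorem~\ref{mergelyancompactL2} yields exactly the conclusion: for every $(n,n)$-form $f\in L^2(X)$ with support in $\ol D$ such that, for every neighborhood $V$ of $\ol D$, $f=\opa g_V$ for some $(n,n-1)$-form $g_V\in L^2(X)$ with compact support in $V$, there exists an $(n,n-1)$-form $g\in L^2(X)$ with support in $\ol D$ satisfying $f=\opa g$.

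There is essentially no obstacle here; the only mild point to note is that Theorem~\ref{th:L2Mergelyan} is stated for a general complex manifold $X$ of dimension $n$ with no lower bound, matching the $n\geq 1$ of the corollary, so no dimension restriction intervenes. One should also recall, as used inside the proof of Theorem~\ref{mergelyancompactL2}, that pseudoconvexity of $D$ together with Lipschitz boundary gives Hausdorffness of $H^{n,n}_{\ol D,L^2}(X)$ (via the $L^2$ Serre duality and H\"ormander's theory), but that is already absorbed into the cited theorem. Thus the proof reads: ``Since $\ol D$ has a Stein neighborhood basis, Theorem~\ref{th:L2Mergelyan} shows that $D$ has the $L^2$-Mergelyan property. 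The conclusion then follows from Theorem~\ref{mergelyancompactL2}.''
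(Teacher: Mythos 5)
Your proposal is correct and is exactly the paper's intended argument: the paper introduces this corollary with the phrase ``As a direct consequence of Theorems \ref{mergelyancompactL2} and \ref{th:L2Mergelyan}, we obtain the following result,'' i.e.\ Theorem \ref{th:L2Mergelyan} gives the $L^2$-Mergelyan property from the Stein neighborhood basis, and Theorem \ref{mergelyancompactL2} then yields the prescribed-support solvability. Your hypothesis-checking is accurate and nothing further is needed.
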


\section{The $1$-dimensional case}\label{sdim1}

In this section we consider more precisely the case when $X$ is a Riemann surface. In particular we will relate the classical $1$-dimensional Runge's theorem with some properties of some Dolbeault cohomology groups and see that in Riemann surfaces the different notions Runge, $\ci$-Mergelyan and $L^2$-Mergelyan in $X$ are all equivalent for a domain $D$ with sufficiently smooth boundary.

  The classical Oka-Weil theorem (see Corollary \ref{okaweil}), asserts that, in a Stein manifold $X$ of complex dimension $n\geq 2$, a sufficient condition for a compact subset $K$ to be Runge in $X$ is to be $\oc(X)$-convex. It follows from the maximum principle that, if a compact subset $K$ is $\oc(X)$-convex, then $X\setminus K$ has no relatively compact connected components. This topological property is the key point of the study of holomorphic approximation in Riemann surfaces.

The results of the next sections are reformulations in terms of Dolbeault cohomology groups of the classical results in Riemann surfaces. The ideas of the proofs have already appear in \cite{Marunge} and in the proof of Theorem 1.3.1 in \cite{Ho}.

\subsection{Runge's theorem}

Let $X$ be a connected Riemann surface and $K\subset\subset X$ be a compact subset of $X$. We denote by $H^{1,1}_{K,cur}(X)$ the Dolbeault cohomology group with prescribed support in $K$ for  $(1,1)$-currents and by $\check{H}^{1,0}_\Phi(X\setminus K)$ the set of holomorphic $(1,0)$-forms $h$ on $X\setminus K$, vanishing outside some compact subset of $X$ and such that $h=S_{|{X\setminus K}}$ for some $(1,0)$-current $S$ on $X$. For references in this case, we refer to the survey paper \cite{FoForWo}.

\begin{thm}\label{rungecompact1}
Let $X$  be a connected Riemann surface and $K\subset\subset X$ be a compact subset of $X$.  Then the following assertions are equivalent:

(i) $\check{H}^{1,0}_\Phi(X\setminus K)=0$;

(ii) the natural map $H^{1,1}_{K,cur}(X)\to H^{1,1}_c(X)$ is injective;

(iii) $K$ is Runge in $X$;

(iv) $X\setminus K$ has no relatively compact connected components.
\end{thm}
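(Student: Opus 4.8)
The strategy is to establish the cycle of implications (iii) $\Rightarrow$ (iv) $\Rightarrow$ (i) $\Rightarrow$ (ii) $\Rightarrow$ (iii), using the general machinery already developed in the higher-dimensional sections, specialized to $n=1$. First I would observe that (iii) $\Rightarrow$ (iv) is the classical and easy direction: if $X\setminus K$ had a relatively compact connected component $O$, then $\overline O\setminus O\subset \partial O\subset K$, so by the maximum principle a holomorphic function on a neighborhood of $K$ that is, say, $0$ near $\partial O$ but takes a nonzero value at an interior point of $O$ cannot be uniformly approximated on $K$ by functions in $\oc(X)$; alternatively one invokes that a point $p\in O$ lies in $\widehat K_X$, contradicting the density statement evaluated at $p$.

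For (iv) $\Rightarrow$ (i), I would argue that when $X\setminus K$ has no relatively compact components, $K$ admits a neighborhood basis of $1$-convex (i.e. Runge, open, relatively compact) sets; more directly, one shows every $\opa$-closed $(1,0)$-current $h$ on $X\setminus K$ that extends across $X$ and vanishes outside a compact set must vanish identically. Indeed $h$ is holomorphic on $X\setminus K$, it vanishes on the unbounded part (the union of the unbounded components of the complement of a large compact set), and since $X\setminus K$ has no relatively compact components, every component of $X\setminus K$ meets that unbounded region; by analytic continuation along each connected component, $h\equiv 0$ on $X\setminus K$, so in particular the trivial $S=0$ solves the problem — giving $\check H^{1,0}_\Phi(X\setminus K)=0$. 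The implication (i) $\Rightarrow$ (ii) is exactly the content of Proposition \ref{strongcomplement} (or its analogue for currents in the compact-set setting, as in Lemma \ref{lemmecompl} and Theorem \ref{complementK}) applied with $n=1$, $p=q=0$: one uses $H^{1,0}_c(X)=0$ (valid since $X$ is an open, hence Stein, Riemann surface) to convert the vanishing of the complement cohomology into injectivity of $H^{1,1}_{K,cur}(X)\to H^{1,1}_c(X)$. Finally (ii) $\Rightarrow$ (iii) is the Hahn–Banach/Serre-duality argument of Theorem \ref{rungeK} (or Theorem \ref{stronglyrunge}) in dimension one: a $(1,1)$-current $T$ supported in $K$ and annihilating $\oc(X)$ is $\opa$-exact with compact support (since $H^{1,1}_c(X)$ is Hausdorff for $X$ Stein), the injectivity hypothesis lets us take the primitive $S$ supported in $K$, and then for $\varphi\in\oc(K)$ one gets $\langle T,\varphi\rangle = \langle \opa S,\varphi\rangle = \pm\langle S,\opa\varphi\rangle = 0$.

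\textbf{Main obstacle.} The delicate point is the passage (iv) $\Rightarrow$ (i): matching the purely topological hypothesis with the cohomological vanishing of extendable currents. One must be careful that an a priori merely distributional $(1,0)$-current $h$ on $X\setminus K$ which is $\opa$-closed there is genuinely holomorphic (elliptic regularity for $\opa$ in one variable), and that "vanishing outside a compact subset of $X$" together with "no relatively compact complementary component" forces $h$ to vanish on every component of $X\setminus K$ via unique continuation. The remaining implications are essentially bookkeeping translations of the already-proved Theorems \ref{rungeK}, \ref{complementK} and Proposition \ref{strongcomplement} to the one-dimensional case, using only that an open Riemann surface is Stein so that $H^{1,0}_c(X)=0$ and $H^{1,1}_c(X)$ is Hausdorff.
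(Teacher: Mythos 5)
Your proposal is correct and follows essentially the same route as the paper: the identical cycle $(i)\Rightarrow(ii)\Rightarrow(iii)\Rightarrow(iv)\Rightarrow(i)$ with the same ingredients --- in bidegree $(1,0)$ the vanishing of $\check{H}^{1,0}_\Phi(X\setminus K)$ forces $\supp S\subset K$ directly (no correction term is needed, and the hypothesis $H^{1,0}_c(X)=0$ you invoke is not actually used for that direction), then the Hahn--Banach argument of Theorem \ref{rungeK} for $(ii)\Rightarrow(iii)$, the maximum principle for $(iii)\Rightarrow(iv)$, and unique continuation on each non-relatively-compact component for $(iv)\Rightarrow(i)$. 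The one caveat is your first sketch of $(iii)\Rightarrow(iv)$: a function holomorphic on a neighborhood of $K$ is neither defined nor controlled by approximation on $K$ at interior points of a relatively compact component $O$, so one should instead run the classical argument with a meromorphic function having a pole at $p\in O$ (multiplying by a global holomorphic function vanishing at $p$ before applying the maximum principle on $\ol O$); the paper itself leaves this step at the level of ``follows from the maximum principle.''
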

\begin{proof}
We first prove that (i) implies (ii). Assume $\check{H}^{1,0}_\Phi(X\setminus K)=0$ and let $T$ be a $\opa$-closed $(1,1)$-current on $X$ with support contained in $K$ such that $T=\opa S$ for some $(1,0)$-current with compact support in $X$, then $h=S_{|{X\setminus K}}$ belongs to $\check{H}^{1,0}_\Phi(X\setminus K)$ and hence $h=0$ on $X\setminus K$, which means that the support of $S$ is contained in $K$.

Note that if the natural map $H^{1,1}_{K,cur}(X)\to H^{1,1}_c(X)$ is injective, then  for any $V$ belonging to a neighborhood basis of $K$ the natural map $H^{1,1}_c(V)\cap (\ec'_K)^{1,1}(X)\to H^{1,1}_c(X)$ is injective. Then it remains to apply Theorem \ref{rungeK} the get that (ii) implies (iii).

It follows from the maximum principle that (iii) implies (iv).

Assume (iv) is satisfied, therefore if $h\in \check{H}^{1,0}_\Phi(X\setminus K)$, then $h$ vanishes on an open subset of each connected component of $X\setminus K$ and by analytic continuation $h=0$ on $X\setminus K$, which implies (i).
\end{proof}

Note that the equivalence between (iii) and (iv) in Theorem \ref{rungecompact1} is exactly the classical Runge's theorem.

\subsection{Mergelyan properties}

As mentioned in  previous sections, holomorphic approximation is directly related to the solvability of the $\opa$-equation with compact or prescribed support in top degree. But if the manifold $X$ is $1$-dimensional, this means bidegree $(1,1)$, which is very special.

Following section 1 in \cite{LaShHartogs}, we have:

\begin{prop}\label{support}
Let $X$ be a complex manifold and $T$ be a $\opa$-exact $(1,1)$-current on $X$. If $\Omega^c$ denotes a connected component of $X\setminus{\rm supp}~T$ and if $S$ is a $(1,0)$-current on $X$ such that $\opa S=T$, then either ${\rm supp}~S\cap\Omega^c=\emptyset$ or $\Omega^c\subset{\rm supp}~S$.
\end{prop}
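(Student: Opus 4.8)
The plan is to argue by unique continuation for holomorphic functions, reducing the statement to the fact that $S$ is holomorphic on each connected component of $X \setminus \supp T$. First I would observe that, since $X$ is one-dimensional, bidegree $(1,0)$ is the top holomorphic degree: a $(1,0)$-current $S$ on an open set is $\opa$-closed if and only if it is (locally) given by a holomorphic $1$-form. Concretely, writing things in a coordinate chart $z$ on an open subset $\Omega^c$ of $X \setminus \supp T$, we have $S = u\, dz$ for a distribution $u$, and $\opa S = \opa u \wedge dz = (\partial u / \partial \bar z)\, d\bar z \wedge dz$; the hypothesis $\opa S = T = 0$ on $\Omega^c$ forces $\partial u / \partial \bar z = 0$ there, so by Weyl's lemma $u$ is holomorphic. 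Hence $S$ restricted to $\Omega^c$ is a holomorphic $1$-form.

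Next I would invoke connectedness of $\Omega^c$ together with the identity theorem for holomorphic functions. The set $\Omega^c \cap \supp S$ is relatively closed in $\Omega^c$ by definition of support. I claim its complement $\Omega^c \setminus \supp S$ is also closed in $\Omega^c$: if $p \in \Omega^c$ lies in the closure of $\Omega^c \setminus \supp S$, then $S$ vanishes on a nonempty open subset of a coordinate neighborhood of $p$ inside $\Omega^c$, and since the local holomorphic representative of $S$ vanishes on a nonempty open set, unique continuation gives that it vanishes on the whole connected coordinate neighborhood, so $p \notin \supp S$. Thus $\Omega^c \setminus \supp S$ is open and closed in $\Omega^c$; by connectedness it is either empty or all of $\Omega^c$. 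In the first case $\Omega^c \subset \supp S$, and in the second case $\supp S \cap \Omega^c = \emptyset$, which is exactly the dichotomy asserted.

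The only mildly delicate point — and the one I expect to require the most care — is the passage from ``$S$ has a holomorphic local representative'' to genuine unique continuation at the level of currents: one must make sure that a $(1,0)$-current which is $\opa$-closed on a connected open set and vanishes on a nonempty open subset vanishes identically on that set. This follows from covering the connected open set by overlapping charts and applying the classical identity theorem chart by chart, using that the holomorphic representatives agree on overlaps (both equal the restriction of $S$). No estimates or hard analysis are needed; the work is purely in organizing the local-to-global argument. I would also note in passing that this proposition is exactly the one-dimensional shadow of the kind of ``support propagation'' used repeatedly in the earlier sections for the Dolbeault groups $\check H^{n,0}_\Phi$, specialized to $n=1$ where the $\opa$-cohomology in degree $(1,0)$ is governed entirely by holomorphy.
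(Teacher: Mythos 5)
Your argument is correct and is exactly the expected proof: on $\Omega^c$ the current $S$ is $\opa$-closed of bidegree $(1,0)$, hence a holomorphic $1$-form by hypoellipticity, and the open--closed (identity theorem) argument on the connected set $\Omega^c$ yields the dichotomy. The paper itself gives no proof here, referring instead to Section~1 of \cite{LaShHartogs}, so there is nothing to compare line by line. One small point: the proposition is stated for an arbitrary complex manifold, not only a Riemann surface, whereas you write $S=u\,dz$ in a single coordinate; this is harmless, since in any dimension a $\opa$-closed $(1,0)$-current has all its coefficients annihilated by every $\partial/\partial\bar z_k$, hence is a holomorphic $1$-form, and the several-variables identity theorem runs exactly as in your chart-by-chart argument.
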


Assume the complex manifold $X$ is non-compact, then from Proposition \ref{support}, we get that, if $T$ is a $(1,1)$-current with compact support in $X$ such that the cohomology class $[T]$ of $T$ in $H^{1,1}_c(X)$ vanishes, the support of the unique solution $S$ with compact support in $X$ of the equation $\opa S=T$ is contained in the union of the support of $T$ and the relatively compact connected components of $X\setminus {\rm supp}~T$. Moreover using the regularity of the $\opa$ operator, we get

\begin{prop}\label{trou}
Let $X$ be a connected, non-compact, complex manifold and $D$ a relatively compact domain in $X$ such that  $X\setminus D$ has no relatively compact connected component in $X$. Then injectivity holds for all the natural maps

1) $H^{1,1}_{\ol D,cur}(X)\to H^{1,1}_c(X)$,

2) $H^{1,1}_{\ol D,L^2}(X)\to H^{1,1}_c(X)$.
\end{prop}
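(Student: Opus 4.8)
The statement concerns a connected non-compact complex manifold $X$ and a relatively compact domain $D$ whose complement $X\setminus D$ has no relatively compact connected component. We must show injectivity of the two natural maps $H^{1,1}_{\ol D,cur}(X)\to H^{1,1}_c(X)$ and $H^{1,1}_{\ol D,L^2}(X)\to H^{1,1}_c(X)$. The plan is to treat both cases in parallel, since the only difference is the regularity class of the representative currents (general currents vs. $L^2$ forms); the mechanism is entirely the support-propagation principle of Proposition \ref{support}.

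\textbf{First step: reduce to a support statement.} Let $T$ be a $(1,1)$-current (resp.\ an $L^2_{loc}$ $(1,1)$-form) with support in $\ol D$ whose class vanishes in $H^{1,1}_c(X)$; that is, $T=\opa S$ for some $(1,0)$-current (resp.\ $L^2$ $(1,0)$-form) $S$ with \emph{compact} support in $X$. To prove injectivity of the map into $H^{1,1}_c(X)$ it suffices to show that one can choose such an $S$ with support in $\ol D$, which will say exactly that $[T]=0$ in $H^{1,1}_{\ol D,cur}(X)$ (resp.\ $H^{1,1}_{\ol D,L^2}(X)$). So the whole question is: can the primitive $S$ be taken with support in $\ol D$?

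\textbf{Second step: apply the support-propagation principle.} Apply Proposition \ref{support} with $\mathrm{supp}\,T\subset\ol D$: for every connected component $\Omega^c$ of $X\setminus\mathrm{supp}\,T$, we have either $\mathrm{supp}\,S\cap\Omega^c=\emptyset$ or $\Omega^c\subset\mathrm{supp}\,S$. Since $S$ has compact support and $X\setminus\ol D\subset X\setminus\mathrm{supp}\,T$, the only components $\Omega^c$ meeting $X\setminus\ol D$ on which $S$ could fail to vanish are those contained in $\mathrm{supp}\,S$, hence relatively compact. But every connected component $W$ of $X\setminus\ol D$ is, by hypothesis, not relatively compact; thus $W$ cannot be contained in any relatively compact component $\Omega^c$ of $X\setminus\mathrm{supp}\,T$ that lies inside $\mathrm{supp}\,S$. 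One must check the small bookkeeping point that a non-relatively-compact component $W$ of $X\setminus\ol D$, being connected and contained in $X\setminus\mathrm{supp}\,T$, lies in a single component $\Omega^c$ of $X\setminus\mathrm{supp}\,T$, and that $\Omega^c$ inherits non-relative-compactness (it contains the closed non-relatively-compact set $W$, so it cannot be relatively compact). Hence on each such $\Omega^c$ the first alternative holds: $\mathrm{supp}\,S\cap\Omega^c=\emptyset$. Consequently $S$ vanishes on $X\setminus\ol D$, i.e.\ $\mathrm{supp}\,S\subset\ol D$, which is what we wanted. In the $L^2$ case one additionally invokes the interior regularity of $\opa$ (the $L^2$ primitive $S$ is holomorphic off $\mathrm{supp}\,T$, hence smooth there) so that the statement of Proposition \ref{support}, phrased for currents, applies verbatim to $S$.

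\textbf{Expected main obstacle.} There is no deep difficulty: once Proposition \ref{support} is granted, the argument is a short topological deduction. The only subtlety worth care is the precise matching of components: one must be sure that ``$X\setminus D$ has no relatively compact connected component'' transfers to the components of the possibly larger open set $X\setminus\mathrm{supp}\,T$ that are relevant here, and one should note that $D$ connected (so $X\setminus D$ a single component) is the cleanest case while in general one argues component by component. I would also remark that Hausdorffness of the target is not needed: the conclusion $\mathrm{supp}\,S\subset\ol D$ is obtained directly, without passing to quotients.
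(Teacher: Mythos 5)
Your argument is essentially the paper's own proof: the proposition is obtained by combining Proposition \ref{support} with the observation that a component of $X\setminus\supp T$ contained in $\supp S$ must be relatively compact (since $\supp S$ is compact), so that $S$ vanishes on every non--relatively-compact component of the complement, together with interior regularity of $\opa$ to keep the primitive in the $L^2$ class. The one point to state more carefully is your ``bookkeeping'' step: the hypothesis concerns the connected components of $X\setminus D$, not of $X\setminus\ol D$ (and a component $W$ of the open set $X\setminus\ol D$ is open, not closed, so ``$\Omega^c$ contains the closed non-relatively-compact set $W$'' should read that $\Omega^c$ contains $W$, hence $\ol{\Omega^c}\supset\ol W$ is non-compact); the paper makes the same silent identification of the two families of components, which is innocuous for the Lipschitz boundaries used in the sequel but deserves an explicit sentence for a general domain.
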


Using Theorems \ref{stronglyrunge} and \ref{L2runge}, we then obtain:

\begin{cor}
Let $X$ be a connected non-compact complex manifold of complex dimension $1$ and $D\subset\subset X$ a relatively compact domain with Lipschitz boundary in $X$.
 Assume $X\setminus D$ has no relatively compact connected component in $X$, then $D$ is $\ci$ and $L^2$-Runge in $X$.
\end{cor}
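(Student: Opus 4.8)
The plan is to deduce this corollary directly from the two approximation criteria established earlier, namely Theorem \ref{stronglyrunge} for the $\ci$-setting and Theorem \ref{L2runge} for the $L^2$-setting, combined with the injectivity statements of Proposition \ref{trou}. First I would check that the Hausdorff hypotheses in those two theorems are automatically satisfied here. Since $\dim_{\cb} X = 1$, the relevant bidegree is $(n-p,n-q)=(1-p,1-q)$ with $0\le q\le n-1=0$, so necessarily $q=0$ and $p\in\{0,1\}$; the groups to control are therefore $H^{1,1}_c(X)$, $H^{1,1}_{\ol D,cur}(X)$ and $H^{1,1}_{\ol D,L^2}(X)$ in degree $(1,1)$, together with $H^{0,1}_c(X)$, $H^{0,1}_{\ol D,cur}(X)$, $H^{0,1}_{\ol D,L^2}(X)$ in degree $(0,1)$.

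The degree $(0,1)$ groups with prescribed support in $\ol D$ vanish: a $(0,1)$-current (resp.\ $L^2$ form) with support in $\ol D$ on a Riemann surface is automatically $\opa$-closed, and by Proposition \ref{support} its primitive, being a function with support in a proper closed set of the connected non-compact surface $X$, must vanish on the unbounded component of the complement and hence — using that $X\setminus D$ has no relatively compact component and a small regularity argument as in Proposition \ref{trou} — everywhere outside $\ol D$; one then concludes Hausdorffness of $H^{1,1}_{\ol D,cur}(X)$ and $H^{1,1}_{\ol D,L^2}(X)$ via the Serre duality pairings recalled in Section 2. Likewise $H^{1,1}_c(X)$ and the degree $(0,1)$ compactly supported or $L^2$-local cohomology of the non-compact surface $X$ are Hausdorff by the Andreotti--Vesentini/$1$-concave argument invoked after Corollary \ref{rungeStein}, or more elementarily because any $\opa$-equation in bidegree $(0,1)$ on a non-compact Riemann surface is solvable. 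Thus the hypotheses of Theorems \ref{stronglyrunge} and \ref{L2runge} hold with $q=0$ for both $p=0$ and $p=1$.

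It then remains to verify the injectivity of the natural maps $H^{1,1}_{\ol D,cur}(X)\to H^{1,1}_c(X)$ and $H^{1,1}_{\ol D,L^2}(X)\to H^{1,1}_c(X)$, which is exactly the content of Proposition \ref{trou}, parts 1) and 2), under the standing assumption that $X\setminus D$ has no relatively compact connected component. Applying Theorem \ref{stronglyrunge} gives that $D$ is $\ci$ $0$-Runge, i.e.\ $\ci$-Runge, in $X$, and applying Theorem \ref{L2runge} gives that $D$ is $L^2$ $0$-Runge, i.e.\ $L^2$-Runge, in $X$. The Lipschitz hypothesis on $bD$ is what makes the Serre duality pairings used in Theorems \ref{stronglyrunge} and \ref{L2runge} available, so it is imposed here as well.

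The only real point requiring care — and the place where I expect the proof to concentrate its argument — is the verification that the prescribed-support cohomology groups in degree $(1,1)$ are Hausdorff; everything else is a bookkeeping matter of specializing $n=1$, $q=0$ and quoting Proposition \ref{trou}. That Hausdorffness, however, is already packaged into Proposition \ref{trou} (whose proof rests on Propositions \ref{support} and the regularity of $\opa$), so in fact the corollary follows by simply feeding Proposition \ref{trou} into Theorems \ref{stronglyrunge} and \ref{L2runge}; the proof can be as short as two sentences.
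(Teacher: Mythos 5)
Your proposal is correct and follows exactly the paper's route: the corollary is obtained by feeding the injectivity statements of Proposition \ref{trou} into Theorems \ref{stronglyrunge} and \ref{L2runge} with $n=1$, $q=0$. Your additional verification of the Hausdorff hypotheses is a detail the paper leaves implicit (and, minor quibble, it is not literally ``packaged into'' Proposition \ref{trou}, which only asserts injectivity), but your sketch of why those groups are Hausdorff in dimension one is sound.
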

%\input runge3bis

%\bibliography{biblio}
\providecommand{\bysame}{\leavevmode\hbox to3em{\hrulefill}\thinspace}
\providecommand{\MR}{\relax\ifhmode\unskip\space\fi MR }
% \MRhref is called by the amsart/book/proc definition of \MR.
\providecommand{\MRhref}[2]{%
  \href{http://www.ams.org/mathscinet-getitem?mr=#1}{#2}
}
\providecommand{\href}[2]{#2}

\enddocument

\end